\documentclass[11pt, a4paper]{article}

\usepackage{inputenc} 
\usepackage[english]{babel}
\usepackage{dsfont}

\usepackage[totalheight=24 true cm, totalwidth=17 true cm]{geometry}
\usepackage{enumitem}
\usepackage[toc,page]{appendix}

\setlist[itemize]{noitemsep}
\setlist[enumerate]{noitemsep,label=\textup{(\roman*)}}

\usepackage{bbding}
\usepackage{amsmath,multirow}
\usepackage{amsthm}
\usepackage{amssymb}
\usepackage{subcaption}
\usepackage{mathrsfs}
\usepackage{mathtools}
\usepackage{ stmaryrd }
\usepackage{url}
\usepackage{wrapfig}
\usepackage{starfont}
\usepackage{pifont}
\usepackage{eurosym}

\usepackage{imakeidx}
\makeindex[]

\usepackage{multicol}
\usepackage{relsize}
\usepackage{float}
\usepackage{tikz,pgf}
\usepackage{tikz-cd}
\usepackage{wasysym}
\usepackage{graphicx}
\usepackage{fancyhdr}
\usepackage{verbatim}

\usepackage{pgfplots}

\pgfplotsset{compat=1.15}
\usepackage{mathrsfs}
\usetikzlibrary{arrows}

\newcommand{\liftmat}{\mathcal{M}_q^\gamma(M)}
\newcommand{\dimm}[2]{\dim_#1^#2}
\newcounter{cases}
\newcounter{subcases}
\newcounter{subsubcases}
\newenvironment{mycases}
  {%
    \setcounter{cases}{0}%
    \def\case
      {%
        \par\noindent
        \refstepcounter{cases}%
        \textbf{Case \thecases.}
      }%
  }
  {%
    \par
  }

\newcommand\restr[2]{{
  \left.\kern-\nulldelimiterspace 
  #1 
  \vphantom{\small|} 
  \right|_{#2} 
  }}
\newcommand{\CC}{\mathbb{C}}

\newcommand{\VMLIFT}{V_{M}^{\text{lift}}}
\newcommand{\rk}{\text{rk}}

\newcommand{\CCC}{\mathcal{C}}

\newcommand{\Lift}{\textup{Lift}}
\newcommand{\VCM}{V_{\mathcal{C}(M)}}
\newcommand{\VCN}{V_{\mathcal{C}(N)}}

\usepackage[final]{pdfpages}
\usepackage{eurosym}
\newtheorem{theorem}{Theorem}[section]

\newtheorem{lemma}[theorem]{Lemma}

\theoremstyle{definition}
\newtheorem{definition}[theorem]{Definition}
\newtheorem{proposition}[theorem]{Proposition}
\newtheorem{example}[theorem]{Example}

\newtheorem{notation}[theorem]{Notation}
\newtheorem{remark}[theorem]{Remark}

\usepackage[all,dvips,knot,web,arc,curve,color,frame]{xy}
\usepackage[all,knot,arc,color,web]{xy}
\xyoption{arc}
\xyoption{web}
\xyoption{curve}

\newtheorem{theoremA}{Theorem}

\newtheorem{theoremB}{Theorem}

\newtheorem{strategy}{Strategy}
\numberwithin{equation}{section}

\setcounter{tocdepth}{1}

\theoremstyle{plain}

\newtheorem{question}[theorem]{Question}
\usepackage[bookmarksnumbered=true]{hyperref} 
\hypersetup{
     colorlinks = true,
     linkcolor = blue,
     anchorcolor = blue,
     citecolor = teal,
     filecolor = blue,
     urlcolor = blue
     }
\frenchspacing

\everymath{\displaystyle}
\allowdisplaybreaks

\makeatletter
\def\mathcenterto#1#2{\mathclap{\phantom{#1}\mathclap{#2}}\phantom{#1}}
\let\old@widetilde\widetilde
\def\widetildeto#1#2{\mathcenterto{#2}{\old@widetilde{\mathcenterto{#1}{#2\,}}}}
\let\old@widehat\widehat
\def\widehatto#1#2{\mathcenterto{#2}{\old@widehat{\mathcenterto{#1}{#2\,}}}}
\makeatother

\newcommand{\ip}[2]{\langle  #1,#2 \rangle} 

\newcommand{\size}[1]{\left| #1 \right|} 
\newcommand{\pare}[1]{\left( #1 \right)} 
\newcommand{\set}[1]{{\left\{ #1 \right\}}} 

\newcommand{\corch}[1]{\left[ #1 \right]} 

\newcommand*\closure[1]{\overline{#1}}

\DeclareMathOperator{\rank}{rk}

\def\dim{\operatorname{dim}}

\DeclareMathOperator{\sspan}{span}

\hypersetup{
    colorlinks=true,
    linkcolor=blue,
    filecolor=magenta,      
    urlcolor=cyan,
    pdftitle={Overleaf Example},
    pdfpagemode=FullScreen,
    }
    
\urlstyle{same}

\setlist[itemize]{noitemsep}

\normalfont

\usepackage{bbm}
\usepackage{dsfont}

\usepackage{url}
\usepackage{amsmath,blkarray,booktabs, bigstrut}
\usepackage{tikz}
\usepackage{multirow}
\usepackage{xcolor}
\usepackage{color}
\binoppenalty=10000 
\relpenalty=10000
\pretolerance=10000

\setcounter{MaxMatrixCols}{20}



\title{
Algebraic Geometry of Cactus, Pascal, and Pappus Matroids}
\author{Emiliano Liwski, Fatemeh Mohammadi, and Lisa Vandebrouck}
\date{}

\begin{document}
\maketitle

\begin{abstract}
We study rank-three matroids, known as point-line configurations, and their associated matroid varieties, defined as the Zariski closures of their realization spaces. Our focus is on determining finite generating sets of defining equations for these varieties, up to radical, and describing the irreducible components of the corresponding circuit varieties. We generalize the notion of cactus graphs to matroids, introducing a family of point-line configurations whose underlying graphs are cacti.
Our analysis includes several classical matroids, such as the Pascal, Pappus, and cactus matroids, for which we provide explicit finite generating sets for their associated matroid ideals. The matroid ideal is the ideal of the matroid variety, whose construction involves a saturation step with respect to all the independence relations of the matroid. This step is computationally very expensive and has only been carried out for very small matroids. We provide a complete generating set of these ideals for the Pascal, Pappus, and cactus matroids. The proofs rely on classical geometric techniques, including liftability arguments and the Grassmann--Cayley algebra, which we use to construct so-called bracket polynomials in these ideals. In addition, we prove that every cactus matroid is realizable and that its matroid variety is irreducible.

\end{abstract}


\vspace{-2mm
}

\section{Introduction}


Matroids are a combinatorial abstraction of linear dependence among vectors~\cite{whitney1992abstract, Oxley, piff1970vector}. Given a finite set of vectors in a vector space, one can associate a matroid by considering the collection of their linearly dependent subsets. When this process is reversible and a given matroid $M$ arises from such a set, we say that the set is a realization of $M$. The space of all realizations of $M$ is denoted by $\Gamma_M$, and its Zariski closure defines the matroid variety $V_M$, which encodes rich geometric structure. Introduced in~\cite{gelfand1987combinatorial}, matroid varieties have since become a central object of study~\cite{Vakil, Sidman, liwski2024pavingmatroidsdefiningequations, sturmfels1989matroid, feher2012equivariant, knutson2013positroid}. In this work, we study the problem of computing a complete set of defining equations for $V_{M}$. 

\smallskip
The primary goal of this paper is to determine a complete set of defining equations for the matroid variety $V_M$. This problem is known to be difficult, as illustrated in~\cite{pfister2019primary}, where the authors developed an algorithm to compute the defining equations of the matroid variety associated with the $3\times 4$ grid configuration, which has $16$ circuits of size $3$. Using {\tt Singular}, they showed that the corresponding ideal is generated by $44$ polynomials. However, their approach pushed the capabilities of existing computer algebra systems and yielded components without an evident combinatorial interpretation. In contrast, the methods presented here apply to broader classes of matroids and produce defining equations with a clear combinatorial and geometric interpretation.


\smallskip
We now summarize the main contributions of the paper. Throughout, we fix a 
simple rank-three matroid $M$ on the ground set $[d]$. 
To provide context, we first recall the key varieties associated to~$M$. 

\begin{definition}\label{first definition}
Let $M$ be a matroid on $[d]$ of rank at most three.
\begin{itemize}
\item We say that a collection of vectors $\gamma=\{\gamma_{1},\ldots,\gamma_{d}\}\subset \CC^{3}$ is a realization of $M$ if it satisfies:
\[\{i_{1},\ldots,i_{p}\} \ \text{is a dependent set of $M$} \Longleftrightarrow \{\gamma_{i_{1}},\ldots,\gamma_{i_{p}}\}\ \text{is linearly dependent.}\]
The {\em realization space} of $M$ is defined as 
$\Gamma_{M}=\{\gamma\subset \CC^{3}: \gamma \ \text{is a realization of $M$}\}\subset \CC^{3d}$.
The {\em matroid variety} $V_M$ of $M$ is defined as $\closure{\Gamma_M}$. The 
ideal $I_{M}=I(V_{M})$ is called the matroid ideal.
\item We say that $\gamma=\{\gamma_{1},\ldots,\gamma_{d}\}\subset \CC^{3}$ includes the dependencies of $M$ if it satisfies:
\[\set{i_{1},\ldots,i_{p}}\  \text{is a dependent set of $M$} \Longrightarrow \set{\gamma_{i_{1}},\ldots,\gamma_{i_{p}}}\ \text{is linearly dependent}. \] 
The {\em circuit variety} of $M$ is defined as $V_{\mathcal{C}(M)}=\{\gamma:\text{$\gamma$ includes the dependencies of $M$}\}\subset \CC^{3d}$.
The ideal of $V_{\mathcal{C}(M)}$ is denoted by $I_{\CCC(M)}$.
\end{itemize}
\end{definition}

The main question studied in this work is the following:

\begin{question}\label{question generators}
Given a rank-three matroid $M$, find a finite generating set for the ideal $I_{M}$. 
\end{question}

As a first step toward resolving Question~\ref{question generators}, we seek methods for producing explicit polynomials in $I_{M}$. Since the inclusion $I_{\CCC(M)}\subseteq I_{M}$ is immediate, the difficulty lies in constructing additional generators beyond those arising from the circuit variety.
To this end, we rely on two techniques introduced in the literature: the Grassmann-Cayley algebra, as developed in \cite{Sidman}, and the geometric liftability method introduced in \cite{liwski2024pavingmatroidsdefiningequations}. These yield two subideals of $I_{M}$, denoted $G_{M}$ and $I_{M}^{\textup{lift}}$: 
\begin{itemize}
\item The ideal $G_{M}$ 
is generated by Grassmann-Cayley polynomials associated with triples of concurrent lines in $M$. See \S\ref{Grassmann Cayley ideal} for further details.
\item The ideal $I_{M}^{\textup{lift}}$ 
consists of polynomials obtained via the geometric liftability technique. See \S\ref{liftability}. 
\end{itemize}
More precisely, as described in~\cite{Sidman}, the Grassmann–Cayley algebra offers a framework for translating geometric conditions, such as the concurrence of lines in $\mathbb{P}^2$, into algebraic relations. Accordingly, each triple of concurrent lines in a matroid $M$ corresponds, in any realization of $M$, to a triple of concurrent lines in $\mathbb{P}^2$, yielding a polynomial contained in $I_M$.

Furthermore, given a rank-three matroid $M$ on $[d]$ and a collection of vectors $\{\gamma_{1},\ldots,\gamma_{d}\} \subset \mathbb{C}^3$ on a common hyperplane, one may ask under what conditions these vectors can be lifted, from a fixed vector $q \in \mathbb{C}^3$, to a full-rank collection in $V_{\CCC(M)}$. As shown in~\cite{liwski2024pavingmatroidsdefiningequations}, this liftability condition translates into algebraic relations, which yield polynomials in $I_M$.

\begin{example}
Consider the simple rank-three matroid 
in Figure~\ref{fig:combined} (Left). In any realization, the lines $12$, $34$, and $56$ are concurrent, and this geometric condition translates, via the Grassmann–Cayley algebra, into the vanishing of the following bracket polynomial 
$\corch{123}\corch{456} - \corch{124}\corch{356}$,
where $X$ is a $3\times 6$ matrix of indeterminants and $[ijk]$ denotes the determinant of the $3\times 3$ submatrix of $X$ whose columns are indexed by $i,j,k$. 
Thus, this polynomial belongs to the matroid ideal. See~\textup{\cite[Example~2.1.2]{Sidman}} for further details.

Now consider the \emph{quadrilateral set}, the simple rank-three matroid shown in Figure~\ref{fig:combined} (Right), with 
circuits of size three
$\{\{1,2,3\},\{1,5,6\},\{2,4,6\},\{3,4,5\}\}$. 
For any vector $q \in \mathbb{C}^3$, the $4$-minors of the following matrix lie in the matroid ideal (See~Notation~\ref{brackets} and 
\S\ref{liftability} for further details):
\begin{equation}\label{matrix quad}
\begin{pmatrix}
\corch{23q} & -\corch{13q} & \corch{12q} & 0 & 0 & 0 \\
\corch{56q} & 0 & 0 & 0 & -\corch{16q} & \corch{15q} \\ 
0 & \corch{46q} & 0 & -\corch{26q} & 0 & \corch{24q} \\
0 & 0 & \corch{45q} & -\corch{35q} & \corch{34q} & 0
\end{pmatrix}.
\end{equation}
\end{example}

Since complete generating sets are known for each of the ideals $I_{\CCC(M)}$, $G_{M}$, and $I_{M}^{\text{lift}}$, a natural approach to Question~\ref{question generators} is to identify families of matroids for which the sum of these ideals equals~$I_{M}$.

\begin{question}\label{question identify}
Let $M$ be a simple rank-three matroid. Let $G_{M}$ denote the ideal generated by the Grassmann-Cayley polynomials associated with all triples of concurrent lines in $M$, and let $I_{M}^{\textup{lift}}$ denote the ideal consisting of polynomials obtained via the geometric liftability technique. Identify all matroids $M$ for which the following equality holds:
\begin{equation}\label{equality of ideals}
I_{M} = \sqrt{I_{\CCC(M)} + G_{M} + I_{M}^{\textup{lift}}}.
\end{equation}
\end{question}

Our main results exhibit families of matroids $M$ for which Question~\ref{question identify} has an affirmative answer. In this work, all references to generating sets of ideals are taken up to radical. For simplicity, we omit the phrase “up to radical”. 
We now outline the general strategy used to establish this equality.

\begin{strategy}\label{strategy}
To prove Equation~\eqref{equality of ideals}, we proceed as follows:
\begin{enumerate}
\item We establish the equivalent equality of varieties:
\begin{equation}\label{union of varieties}
V_{M}=V_{\CCC(M)}\cap V(G_M)\cap V(I_{M}^{\textup{lift}}).
\end{equation}

\item The inclusion~$\subseteq$ in~\eqref{union of varieties} is clear. To prove the reverse inclusion, let $\gamma\in V_{\CCC(M)}\cap V(G_M)\cap V(I_{M}^{\textup{lift}})$. We distinguish cases based on which component of $V_{\CCC(M)}$ contains $\gamma$.

\item In each case, we construct an arbitrarily small perturbation $\widetilde{\gamma}\in \Gamma_{M}$, showing that
\[\gamma \in \closure{\Gamma_{M}}^{\textup{Euclidean}}=\closure{\Gamma_{M}}^{\textup{Zariski}}=V_{M}.\]
This uses the fact that the Euclidean and Zariski closures of the realization space coincide.
\end{enumerate}
\end{strategy}

In this work, we concentrate on addressing Questions~\ref{question generators} and~\ref{question identify} for three specific families of matroids of rank three: Cactus configurations, Pascal configuration, and Pappus configuration; see Definition~\ref{cact irr}, Figure~\ref{fig:pascal} (b), and Figure~\ref{fig:Pappus, Pappus, I_i, J_i} (Left).
By applying Strategy~\ref{strategy}, along with several additional techniques tailored to each case, we obtain the following main results:

\begin{theoremA}\label{thm:A}
The equality in~\eqref{equality of ideals} holds for the following matroids:
\begin{itemize}
\item Cactus configurations\hfill\textup{(Theorem~\ref{thm: main theorem cactus matroid ideal})}
\item Pascal configuration\hfill\textup{(Theorem~\ref{generators pascal})}
\item Pappus configuration\hfill\textup{(Theorem~\ref{thm:Pappus})}
\end{itemize}
\end{theoremA}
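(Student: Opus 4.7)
The plan is to apply Strategy~\ref{strategy} to each of the three matroid families separately. Since the inclusion $V_M\subseteq V_{\CCC(M)}\cap V(G_M)\cap V(I_M^{\text{lift}})$ is immediate, I focus on the reverse containment. Given $\gamma$ in the right-hand intersection, I would first decompose $V_{\CCC(M)}$ into its irreducible components and determine in which one $\gamma$ lies; one such component is $V_M$ itself (for which the claim is trivial), while the remaining components consist of strata with extra collinearities. The main work is then to construct, in each such extra component, an arbitrarily small perturbation $\widetilde{\gamma}\in\Gamma_M$ of $\gamma$, using the equality of the Euclidean and Zariski closures of $\Gamma_M$ to conclude $\gamma\in V_M$.

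For the cactus configurations, my approach exploits the block-tree structure: a cactus matroid decomposes into blocks, each of which is either an edge or a circuit-line, joined in a tree at cut vertices. I would argue by induction on the number of blocks, producing the perturbation block by block. At each step the new block is anchored at one already realized cut vertex, and the remaining points of the block can be perturbed freely along the corresponding line; this flexibility is precisely what makes the cactus case more tractable and suggests that few Grassmann-Cayley or liftability obstructions beyond the circuits themselves appear. The same inductive construction should also yield the asserted realizability of cactus matroids.

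For the Pascal and Pappus configurations, my plan is to use their classical projective incarnations. The Grassmann-Cayley polynomials in $G_M$ encode the concurrence or collinearity conditions forced by the classical Pascal and Pappus theorems, while $I_M^{\text{lift}}$ furnishes further constraints coming from hyperplane realizations. For each degenerate component of $V_{\CCC(M)}$, I would analyze which subset of points collapses, and then construct a perturbation that first separates the collapsed points along the prescribed lines and, if necessary, adjusts the remaining ones to meet the matroid circuits; the hypothesis that $\gamma$ vanishes on $G_M$ and $I_M^{\text{lift}}$ is the key input that guarantees the perturbation can be arranged to lie in $\Gamma_M$ rather than in a neighboring degenerate stratum.

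The principal obstacle I anticipate is the case-by-case analysis of degenerate components in the Pascal and Pappus settings. These components can be numerous, and ruling out spurious limits requires showing that the liftability equations are strong enough to prevent $\gamma$ from sitting in a component incompatible with every realization. In particular, when $\gamma$ lies in a stratum where multiple points coincide or several lines collapse, the chosen perturbation must simultaneously restore all of the distinctness and incidence relations imposed by $M$, which typically forces one to invoke the full strength of both the Grassmann-Cayley framework and the liftability matrix such as the one displayed in~\eqref{matrix quad}.
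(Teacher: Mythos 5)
Your outline for the Pascal and Pappus configurations coincides with the paper's strategy (decompose $V_{\CCC(M)}$, locate $\gamma$ in a component, perturb into $\Gamma_M$ using $G_M$ and $I_M^{\textup{lift}}$), and your block-by-block induction for cacti is essentially the paper's Lemma~\ref{Generalization of Lemma 4.21 ii}. However, there is a genuine gap in the cactus case. The paper's Theorem~\ref{thm: main theorem cactus matroid ideal} carries the hypothesis that the points of $Q_M$ (those of degree at least three) do not contain a cycle, and the paper exhibits an explicit counterexample (the configuration of Figure~\ref{fig:preliminaries gemengd}, Right) showing this hypothesis cannot be dropped: there one finds $\gamma\in V_{\CCC(M)}\cap V(G_M)$ with $\gamma_1=\gamma_2=\gamma_3=0$ for which the positions of the three high-degree points are forced in the limit by the surrounding lines, and the forced limits violate the collinearity $\{1,2,4\}$. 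Your induction anchors each new block at ``one already realized cut vertex,'' but this presupposes that the cut vertex is not a loop in $\gamma$; eliminating loops at points of $Q_M$ is exactly the content of Lemma~\ref{Generalization of Lemma 4.21 iii}, which uses both $V(G_M)$ and the acyclicity of $Q_M$. As written, your argument would prove a statement that the paper shows to be false.

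A secondary, smaller issue: for Pascal and Pappus your plan to ``separate the collapsed points along the prescribed lines'' glosses over the hardest degenerate strata, namely when all points are collinear (or coincide) and one of the degree-three points is a loop. There one cannot simply lift, because the loop has no assigned position on the line; the paper resolves this with Proposition~\ref{prop: nilpotent add point liftable}, a dimension count (via Theorem~\ref{qgamma is constant} and the intersection-dimension bound from Hartshorne) showing that a point can be placed on the line so that the extended collection is still liftable with the three lines through it concurrent. Some argument of this kind is needed; without it the case analysis does not close. Also note that for cacti the paper needs only $I_{\CCC(M)}+G_M$ (no lifting polynomials), which is slightly sharper than what you assert.
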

Based on Theorem~\ref{thm:A}, one can derive explicit generating sets, up to radical, for the ideals corresponding to the Pascal and Pappus configurations. The table below summarizes the number of generators of each type: circuit polynomials, Grassmann–Cayley polynomials, and lifting polynomials. For detailed expressions and geometric interpretations of these generators, see Remarks~\ref{remark pascal} and~\ref{remark pappus}.

\begin{table}[h!]
\centering
\begin{tabular}{|l|c|c|c|}
\hline
{Configuration} & {Circuit polynomials} & {Grassmann–Cayley polynomials} & {Lifting polynomials} \\
\hline
Pascal & 7 & 7 & 708{,}588 \\ 
Pappus & 9 & 9 & 2{,}361{,}960 \\
\hline
\end{tabular}
\end{table}
The table highlights that the number of lifting polynomials in our generating sets is remarkably large, an observation that generally holds across configurations. This naturally leads to the question of whether smaller generating sets can be constructed. Although we do not pursue this direction in the present work, we pose the following as a guiding question for future research:

\begin{question}
Develop efficient methods for constructing a minimal generating set for the ideal $I_{M}^{\textup{lift}}$. 
\end{question}

Several previous works have provided complete sets of defining equations for specific families of matroids. 
In \cite{pfister2019primary, clarke2024liftablepointlineconfigurationsdefining, liwski2024pavingmatroidsdefiningequations}, it was shown that for certain families of configurations, the ideals $I_{\CCC(M)}$ and $I_{M}^{\text{lift}}$ suffice to generate the ideal $I_{M}$, up to radical.
In \cite{liwski2024pavingmatroidsdefiningequations, clarke2021matroid}, other families were identified for which the ideals $I_{\CCC(M)}$ and $G_M$ generate $I_{M}$.
To our knowledge, this is the first instance in which matroids are identified for which all three ideals $I_{\CCC(M)},I_{M}^{\text{lift}}$ and $G_M$ are needed to generate $I_M$.

In addition, we establish structural decomposition results for the matroid and circuit varieties associated with these configurations, namely Cactus, Pascal and Pappus configurations.
\begin{theoremB}\label{thm:B}
The following statements hold for cactus and Pascal configurations:

\begin{itemize}
\item Let $M$ be any cactus configuration. Then $M$ is realizable, and its matroid variety $V_{M}$ is irreducible. Moreover, if $Q_{M}$ denotes the set of points in $M$ that lie on at least three lines, the circuit variety $\VCM$ has at most $2^{\size{Q_{M}}}$ irredundant irreducible components, each obtained by setting a subset of points in $Q_{M}$ to be loops. \hfill (Theorems~\ref{cact irr} and~\ref{decomposition cactus})

\item The circuit variety of the Pascal configuration $N$ has the following irreducible decomposition:
\[
V_{\CCC(N)} = V_{N} \cup V_{U_{2,9}} \cup \bigcup_{i=7}^{9} V_{N(i)}, \tag{Theorem~\ref{proposition: decomposition pascal configuration}}
\]
where $U_{2,9}$ is the uniform matroid 
and $N(i)$ is obtained from $N$ by setting $i$ to be a loop.
\end{itemize}
\end{theoremB}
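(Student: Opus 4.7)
The plan is to treat the cactus and Pascal statements separately. For the cactus part, I would prove realizability and irreducibility of $V_{M}$ by induction on the block decomposition of the underlying cactus graph. A cactus matroid $M$ is obtained by iteratively gluing an elementary block (either a line with several points on it or a closed cycle of lines) to a smaller cactus along a single shared point. At each gluing step, once a realization of the smaller piece is fixed in $\CC^{3}$, there is enough projective freedom to realize the new block consistently, and this freedom is parameterized by an open dense subset of an affine space. Induction then shows that $\Gamma_{M}$ is nonempty and irreducible, giving realizability of $M$ and irreducibility of $V_{M}=\overline{\Gamma_{M}}$.

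For the decomposition of $\VCM$, the key observation is that a point $p\in Q_{M}$ carries a concurrence condition: at least three lines of $M$ must pass through $p$ in any realization. Inside $\VCM$, this condition is satisfied in exactly two ways: either $\gamma_{p}$ is the common intersection of these lines (the generic behavior), or $\gamma_{p}=0$, so that $p$ becomes a loop. For each subset $S\subseteq Q_{M}$, let $M_{S}$ denote the matroid obtained from $M$ by declaring every element of $S$ a loop. I would establish the decomposition $\VCM=\bigcup_{S\subseteq Q_{M}}V_{M_{S}}$ by showing, for any $\gamma\in\VCM$, that $\gamma\in V_{M_{S}}$ with $S=\{p\in Q_{M}:\gamma_{p}=0\}$. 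Since each $M_{S}$ is itself, up to loops, a cactus configuration, the irreducibility argument above applies to each $V_{M_{S}}$. Irredundancy among the $V_{M_{S}}$ is then verified by direct comparison, exhibiting for each pair of incomparable subsets a configuration that lies in one of the corresponding varieties but not the other.

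For the Pascal statement, the six points $1,\ldots,6$ lie on a conic and $7,8,9$ are the three intersection points of opposite sides of the hexagon $123456$; Pascal's theorem yields the circuit $\{7,8,9\}$, and these three points are exactly those lying on three lines of $N$, so $Q_{N}=\{7,8,9\}$. The inclusions $V_{N},\,V_{U_{2,9}},\,V_{N(7)},\,V_{N(8)},\,V_{N(9)}\subseteq V_{\CCC(N)}$ are clear. For the reverse inclusion, I would fix $\gamma\in V_{\CCC(N)}$ and distinguish cases by the projective dimension of the span of $\gamma$: if all $\gamma_{i}$ lie on a common line, then $\gamma\in V_{U_{2,9}}$; otherwise, if $\gamma_{i}=0$ for some $i\in\{7,8,9\}$, then $\gamma\in V_{N(i)}$. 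The remaining case, in which $\gamma$ spans a plane and $\gamma_{7},\gamma_{8},\gamma_{9}$ are all nonzero, is treated by using the circuit conditions of $N$ to identify $\gamma$ as a Zariski limit of actual realizations of $N$, placing $\gamma\in V_{N}$. Irreducibility of each listed component is verified individually: $V_{N}$ via the classical parametrization of Pascal realizations by six points on a conic, and the $V_{N(i)}$ and $V_{U_{2,9}}$ by arguments analogous to the cactus case.

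The main technical obstacle is the case analysis required in both parts. In the cactus statement, ensuring irredundancy of the $V_{M_{S}}$ requires a careful understanding of how loop patterns at different points of $Q_{M}$ interact, especially when two such points lie on a common cycle of the cactus, since a degeneration at one point can force an induced degeneration elsewhere and thereby collapse two candidate components into one. In the Pascal statement, the most delicate subcase is when none of $\gamma_{7},\gamma_{8},\gamma_{9}$ vanishes and the configuration spans a plane without visibly resembling a Pascal realization; here a perturbation argument in the style of Strategy~\ref{strategy} is needed to push $\gamma$ into $V_{N}$, and the perturbation must be constructed so as to simultaneously respect all the circuit conditions of $N$ and avoid producing spurious concurrences that would destroy the realization.
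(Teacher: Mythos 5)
Your treatment of the cactus part is essentially the paper's: an induction on the gluing structure to get realizability and irreducibility (the paper routes this through nilpotency and an external theorem, but the underlying fibration argument is the same), and the decomposition of $\VCM$ indexed by the subsets $S=\{p\in Q_{M}:\gamma_{p}=0\}$ is exactly the paper's Equation~\eqref{vcm}. Two small remarks. First, the theorem only asserts an upper bound of $2^{|Q_M|}$ components, so your plan to ``verify irredundancy by direct comparison'' is unnecessary and, as you suspect, may fail for particular cacti where some $V_{M(S)}$ is contained in another; the paper does not attempt it. Second, the step you state but do not justify --- that $\gamma\in V_{M_S}$ once $S$ absorbs all the vanishing points of $Q_M$ --- is the real content: one must show that any $\gamma\in\VCM$ with $\gamma_p\neq 0$ for all $p\in Q_M$ can be perturbed into $\Gamma_M$, including the case where $\gamma_p=0$ at a point of degree at most two (handled in the paper by moving $\gamma_p$ onto the intersection of the at most two relevant lines). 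Your gluing induction can carry this, but it needs to be said.

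The genuine gap is in the Pascal part, in precisely the subcase you flag as delicate: $\gamma$ spans a plane, $\gamma_7,\gamma_8,\gamma_9\neq 0$, and you must conclude $\gamma\in V_N$. A ``perturbation argument in the style of Strategy~\ref{strategy}'' does not suffice here, because $V_{\CCC(N)}$ contains rank-three configurations with no loops among $\{7,8,9\}$ that have coinciding points or collapsed lines (e.g.\ $\gamma_2=\gamma_3=\gamma_9$, which forces $\gamma_2,\gamma_4,\gamma_7,\gamma_8$ collinear); such a $\gamma$ is not an arbitrarily small perturbation away from $\Gamma_N$, and one must instead exhibit it as the limit of a specific one-parameter family of genuine Pascal realizations. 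The paper does this in two layers: Lemma~\ref{props} decomposes the circuit variety of the $8$-point configuration $N\backslash\{1\}$, and its hardest step is the inclusion $V_A\subseteq V_N$ for the degenerate matroid $A$, proved by writing down an explicit parametrization of $\Gamma_N$ and sending $\epsilon\to 0$ with $1+v=1/\epsilon$, $w=\epsilon y/x$, $1+z=x/\epsilon$. Only after this lemma is in place can the generic subcase (some point of $[6]$ whose two lines are distinct) be finished by lifting a realization of $N\backslash\{1\}$ and intersecting two lines to recover $\gamma_1$. Your proposal names the right target but supplies no mechanism for these degenerate strata, and without an analogue of Lemma~\ref{props} and the explicit degeneration the argument does not close. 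Separately, your case ``$\gamma_i=0$ for some $i\in\{7,8,9\}$ implies $\gamma\in V_{N(i)}$'' also needs the observation that $N\backslash\{i\}$ is nilpotent with all points of degree at most two, so that $V_{\CCC(N(i))}=V_{N(i)}$; this is cheap but not automatic.
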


\begin{example}
Consider the cactus configuration shown in Figure~\ref{fig:preliminaries gemengd} (Right), which determines a rank-three matroid $M$ on the ground set $[14]$. By Theorem~\ref{thm:B}, the matroid $M$ is realizable and the variety $V_M$ is irreducible. Moreover, since $Q_M = \{1,2,3\}$, the variety $V_{\CCC(M)}$ has at most eight irreducible components, each obtained by setting a subset of points in $\{1,2,3\}$ to be loops.
\end{example}

While the results presented herein may seem limited to particular families of matroids, it is important to note, as observed in \cite{pfister2019primary}, that determining a generating set for $I_{M}$ is notoriously difficult, with few general methods available for constructing such polynomials. The authors describe questions concerning the matroid ideal as having {\em paved the road to Hell}, descending into an {\em abyss}. 
The matroid ideals are only known for special classes of matroids. We hope that the ideas and techniques developed in this work will advance the understanding of matroid varieties and help identify the conditions under which Questions~\ref{question generators} and~\ref{question identify} have positive answers.


\smallskip

Before reviewing the contents of the paper we would like to comment on some
related works. 
In \cite{pfister2019primary}, the authors developed an algorithm aimed at computing defining equations for the matroid variety associated with the $3\times 4$ grid configuration, which contains 16 circuits of size three. Using {\tt Singular}, they provided a finite generating set for the corresponding ideal. However, the computations stretch the capabilities of current computer algebra systems, and the resulting components lack a clear combinatorial interpretation.
In \cite{clarke2024liftablepointlineconfigurationsdefining}, the authors obtained complete sets of defining equations for the matroid varieties of the quadrilateral set and the $3\times 4$ grid, giving a geometric interpretation for such polynomials.
In \cite{liwski2024pavingmatroidsdefiningequations}, the geometric liftability technique was used to define the ideal $I_{M}^{\textup{lift}}\subset I_{M}$. It was shown that, for paving matroids without points of degree greater than two, the ideal $I_{\CCC(M)}+I_{M}^{\textup{lift}}$ defines the associated matroid variety.
In \cite{Sidman}, the Grassmann–Cayley algebra was employed to define the ideal $G_{M}\subset I_{M}$. Using this approach, the authors constructed seven polynomials in the matroid ideal of Pascal configuration.
In~\cite{liwski2024pavingmatroidsdefiningequations}, it was shown that the ideal $I_{\CCC(M)}+G_M$ is equal to the associated matroid ideal of any forest configuration $M$, up to radical.
For decomposing the circuit variety $V_{\CCC(M)}$, we adopt the algorithm introduced in \cite{liwski2025minimal,liwski2025efficient}.

\smallskip
Although matroid varieties are widely studied in algebraic geometry for their intricate geometric structure, they also arise naturally in a range of other settings, including determinantal varieties~\cite{bruns2003determinantal, clarke2021matroid, herzog2010binomial, pfister2019primary, ene2013determinantal}, rigidity theory~\cite{jackson2024maximal, whiteley1996some, graver1993combinatorial, maximum}, and conditional independence models~\cite{Studeny05:Probabilistic_CI_structures, DrtonSturmfelsSullivant09:Algebraic_Statistics, hocsten2004ideals, clarke2022conditional, caines2022lattice}. In these contexts, one often encounters the problem of decomposing certain ideals such as conditional independence ideals or determinantal ideals into primary components. Notably, matroid ideals frequently appear as such primary components in such decompositions. As a result, understanding the defining equations of matroid varieties, or equivalently their associated matroid ideals, becomes essential for determining the primary decomposition of these broader classes of ideals.

\smallskip
We conclude the introduction with an outline of the paper. Section~\ref{preliminaries} reviews the necessary background on matroids and their associated varieties. In Section~\ref{section cactus}, we introduce and study cactus configurations. We prove that every such configuration corresponds to a nilpotent realizable matroid, and that its associated matroid variety is irreducible. Moreover, we determine a complete generating set, up to radical, for the matroid ideals of cactus configurations. Section~\ref{section pascal and papus} focuses on the Pascal and Pappus configurations, providing generating sets, up to radical, for their respective matroid ideals.

\section{Preliminaries}\label{preliminaries}

This section provides a concise overview of properties of matroids and their associated varieties. For more details, we refer the reader to \cite{Oxley, gelfand1987combinatorial, piff1970vector}. 
Throughout, we adopt the notation $[n] = \{1, \ldots, n\}$ and $\textstyle \binom{[d]}{n}$ to denote the set of $n$-element subsets of $[d]$.

\subsection{Matroids}\label{matroids}
A matroid $M$ consists of a ground set $[d]$ together with a collection $\mathcal{I}$ of subsets of $[d]$, called independent sets, that satisfy the following three axioms: the empty set is independent, i.e., $\emptyset \in \mathcal{I}$; the hereditary property, meaning if $I \in \mathcal{I}$ and $I' \subset I$, then $I' \in \mathcal{I}$; and the exchange property, which states that if $I_1, I_2 \in \mathcal{I}$ with $|I_1| < |I_2|$, then there exists an element $e \in I_2 \setminus I_1$ such that $I_1 \cup \{e\} \in \mathcal{I}$.

The rank of $M$ is the size of its largest independent subset. Since we focus exclusively on matroids of rank at most three, all definitions will be stated in that context. 
Let $M$ be a matroid of rank three on the ground set $[d]$. We introduce the following notions:
\begin{itemize}
\item A subset of $[d]$ is called {\em dependent} if it is not independent. The collection of all dependent sets of $M$ is denoted by $\mathcal{D}(M)$.
\item A subset of $[d]$ is called a {\em circuit} if it is a minimally dependent subset. The set of all circuits is denoted by $\mathcal{C}(M)$.
\item A {\em basis} is an independent subset of $[d]$ of size three. The set of all bases is denoted by $\mathcal{B}(M)$.

\item An element $x\in [d]$ is called a {\em loop} if $\{x\}\in \mathcal{C}(M)$.
\item Two elements $x,y\in [d]$ are said to be {\em parallel}, or form a {\em double point}, if $\{x,y\}\in \mathcal{C}(M)$.
\item $M$ is called {\em simple} if it contains no loops or parallel elements.
\item The {\em restriction of $M$ to $S$} is the matroid 
 on $S$ whose independent sets are the independent sets of $M$ contained in $S$.
 The {\em deletion} of $S$, denoted $M\backslash S$, is defined as $M|([d]\backslash S)$.
\item A {\em line} is defined as a maximal dependent subset of $[d]$, in which every subset of three elements is dependent.  We denote the set of all lines of $M$ by $\mathcal{L}_M$, or simply $\mathcal{L}$ when the context is clear.  

\item Elements in $[d]$ are called {\em points}. For any point $p \in [d]$, let $\mathcal{L}_{p}$ denote the set of lines containing $p$. The {\em degree} of $p$ is defined as $\size{\mathcal{L}_{p}}$.
\end{itemize}

\begin{example}\normalfont\label{uniform 3}
The uniform matroid $U_{2, d}$ on the ground set $[d]$ of rank $2$ is defined as follows: each subset $S\subset [d]$ with $|S| \leq 2$ is independent, while those with $|S| > 2$ are dependent. 
\end{example}


\begin{example}\label{three lines}
Consider the configuration depicted in Figure~\ref{fig:combined} (Left). It defines a simple rank-three matroid on the ground set \([7]\), whose collection of lines is $\mathcal{L} = \{ \{1,2,7\}, \{3,4,7\}, \{5,6,7\} \}$.
The configuration in Figure~\ref{fig:combined} (Right) gives rise to a simple rank-three matroid on \([6]\), with set of lines 
$ \{ \{1,2,3\}, \{1,5,6\}, \{2,4,6\}, \{3,4,5\}\}$.
We refer to this matroid as the \emph{quadrilateral set}, denoted by \(\text{QS}\).
\end{example}

\begin{figure}[h]
    \centering
    \includegraphics[width=0.4\linewidth]{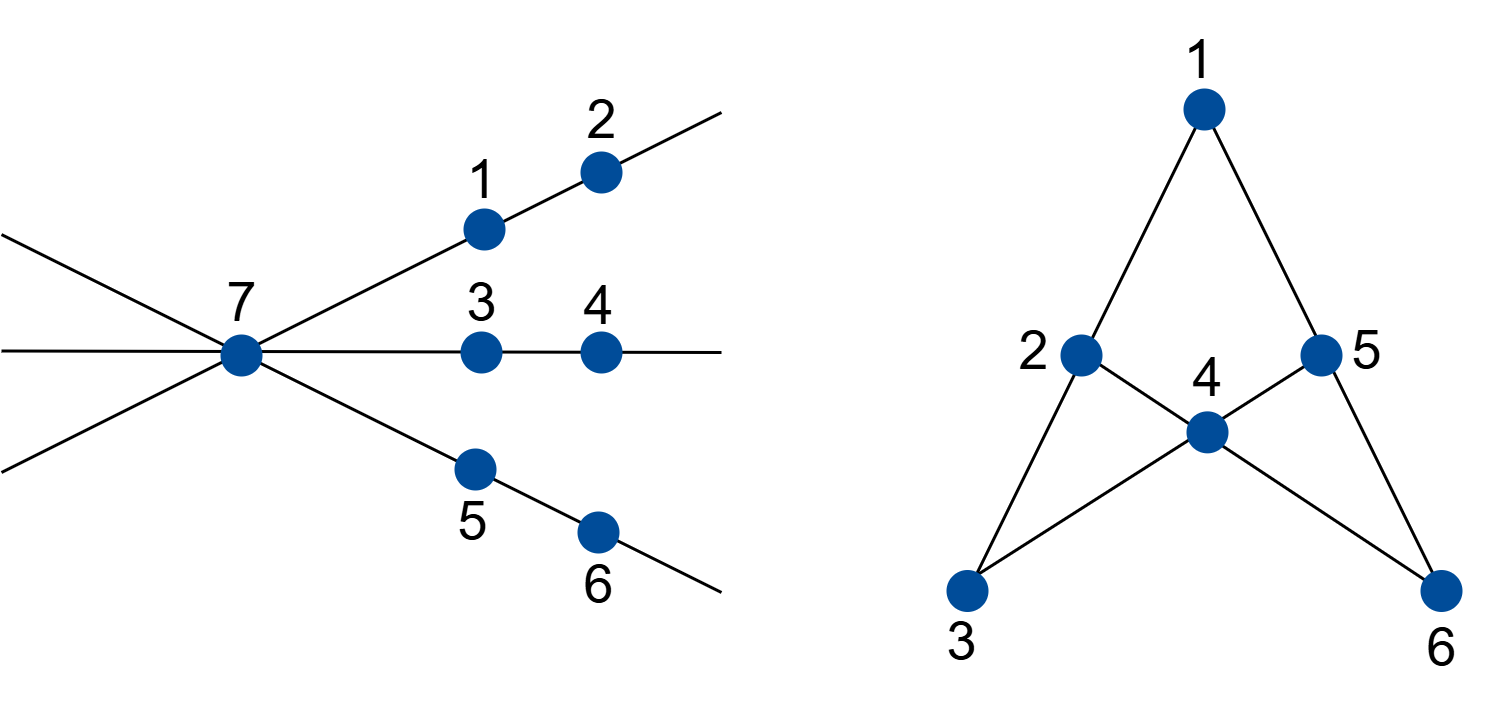}
    \caption{(Left) Three concurrent lines; (Right) Quadrilateral set.}
    \label{fig:combined}
\end{figure}

As in the previous example, we will consistently identify each configuration 
with its associated rank-three matroid. We define two families of matroids that play a key role in this note; see \cite{liwski2024pavingmatroidsdefiningequations}.

\begin{definition}\normalfont \label{nil sol}
For a 
simple rank-three matroid $M$ on $[d]$, we define 
\[S_{M}=\{p\in [d]: \size{\mathcal{L}_{p}}\geq 2\},\quad \text{and} \quad Q_{M}=\{p\in [d]: \size{\mathcal{L}_{p}}\geq 3\}.\] 
We then consider the following chains of submatroids of $M$: 
\begin{equation*}
\begin{aligned}
&M_{0}=M, \ \quad M_{1}=M|S_{M},\quad \text{and }\quad M_{j+1}=M|S_{M_{j}} \quad \ \text{ for all $j\geq 1$.}\\
& M^{0}=M,\quad M^{1}=M|Q_{M},\quad \text{and }\quad M^{j+1}=M|Q_{M^{j}} \quad \text{ for all $j\geq 1$.}
\end{aligned}
\end{equation*}
We say that $M$ is {\em nilpotent} if $M_{j}=\emptyset$ for some $j$, and 
{\em solvable} if $M^{j}=\emptyset$ for some $j$.
\end{definition}

\subsection{Matroid and circuit varieties}

In this subsection, we describe several properties of the matroid and circuit varieties of a matroid $M$ of rank at most three, denoted $V_{M}$ and $V_{\CCC(M)}$, respectively. These varieties are defined in Definition~\ref{first definition}. 

\begin{notation}\label{brackets}
We fix the following  notation that will be used throughout the paper.
\begin{enumerate}
\item For a $3\times d$ matrix $X=\pare{x_{i,j}}$ of indeterminates, we denote by $\CC[X]$ the polynomial ring in the variables $x_{ij}$. Given subsets $A\subset[3]$ and $B\subset[d]$ with $\size{A}=\size{B}$, we denote by $[A|B]_{X}$ the minor of $X$ determined by the rows indexed by $A$ and columns indexed by $B$. 
\item \label{determinant} For vectors $v_{1},v_{2},v_{3}\subset \CC^{3}$, we write $[v_{1},v_{2},v_{3}]$ for the determinant of the matrix whose columns are the vectors $v_1,v_{2},v_3$.
\item \label{brackets notation}To any matroid $M$ of rank at most three on $[d]$, we associate $X$, a $3\times d$ matrix of indeterminates. For any subset $\{i_{1},\ldots,i_{k}\}\subset [d]$ with $k\leq 3$, and vectors $v_{1},\ldots,v_{3-k}\in \CC^{3}$, 
we define \[[i_{1},\ldots,i_{k},v_{1},\ldots,v_{3-k}]\in \CC[X],\] as the determinant of the $3\times 3$ submatrix of $X$ whose columns are indexed by $i_{1},\ldots,i_{k}$ 
and $v_{1},\ldots,v_{3-k}$. Note that $[i_{1},\ldots,i_{k},v_{1},\ldots,v_{3-k}]$ is a polynomial in $\CC[X]$, rather than a number.
\end{enumerate}
\end{notation}

The following introduces the circuit ideal $I_{\CCC(M)}$ and presents explicit equations that generate it.

\begin{definition}\normalfont\label{cir}
Let $M$ be a matroid of rank at most three on $[d]$.
Consider the $3\times d$ matrix $X=\pare{x_{i,j}}$ of indeterminates. The {\em circuit ideal}, for which $V_{\CCC(M)}=V(I_{\CCC(M)})$, is defined as 
$$ I_{\CCC(M)} = \{ [A|B]_X:\ B \in \CCC(M),\ A \subset [3],\ \text{and}\ |A| = |B|\}.$$
\end{definition}

\begin{remark}\label{generating set ICM}
Since $V_{M}\subset V_{\CCC(M)}$, we have $I_{\CCC(M)}\subset I_M$. Further, if $M$ is a simple rank-three matroid on $[d]$, then the ideal $I_{\CCC(M)}$ is generated by the following bracket polynomials defined in  Notation~\ref{brackets}(iii):
\[\{[i,j,k]:\{i,j,k\}\in \CCC(M)\}.\]
\end{remark}

We recall the following result from \cite{liwski2024pavingmatroidsdefiningequations}, 
which will be used in the subsequent sections.  
\begin{theorem}\label{nil coincide}
Let $M$ be a simple rank-three matroid on $[d]$. 
\begin{itemize}
\item[{\rm (i)}] If $M$ is nilpotent and has no points of degree greater than two, then $V_{\mathcal{C}(M)}=V_{M}$.
\item[{\rm (ii)}] If $M$ is solvable, then $V_{M}$ is irreducible.
\end{itemize}
\end{theorem}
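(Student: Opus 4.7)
The inclusion $V_M\subseteq V_{\mathcal{C}(M)}$ is immediate from the definitions, so the content lies in the reverse direction. Following Strategy~\ref{strategy}(iii), given $\gamma\in V_{\mathcal{C}(M)}$ I would construct an arbitrarily small Euclidean perturbation $\widetilde{\gamma}\in \Gamma_M$; since $\closure{\Gamma_M}^{\textup{Euclidean}}=\closure{\Gamma_M}^{\textup{Zariski}}=V_M$, this yields $\gamma\in V_M$. The natural approach is induction on $d$. Nilpotence of $M$ guarantees the existence of a point $p\in[d]$ with $\size{\mathcal{L}_p}\leq 1$: otherwise $S_M=[d]$ and the chain $M\supset M_1\supset M_2\supset\cdots$ would stabilize before reaching $\emptyset$. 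Deleting such a $p$ yields $M'=M\backslash\{p\}$, which is again nilpotent with no points of degree greater than two.

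\textbf{Extension step for (i).} Given $\gamma\in V_{\mathcal{C}(M)}$, its restriction $\gamma|_{[d]\setminus\{p\}}$ lies in $V_{\mathcal{C}(M')}$, so by the inductive hypothesis there exists $\widetilde{\gamma}'\in \Gamma_{M'}$ arbitrarily close to $\gamma|_{[d]\setminus\{p\}}$. To extend $\widetilde{\gamma}'$ to a realization of $M$, I would pick $\widetilde{\gamma}_p$ close to $\gamma_p$ in one of two ways: if $p$ is isolated (degree $0$), choose $\widetilde{\gamma}_p$ in general position with respect to all lines spanned by $\widetilde{\gamma}'$; if $p$ lies on a unique line $\ell\in \mathcal{L}$, then the other elements of $\ell$ already span a common line in $\widetilde{\gamma}'$, so place $\widetilde{\gamma}_p$ on that line but off every spurious line. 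Both constraints exclude only proper Zariski-closed subsets of a positive-dimensional locus, so a suitable choice arbitrarily close to $\gamma_p$ always exists.

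\textbf{Strategy for (ii).} The plan is to exhibit $\Gamma_M$ as a dominant rational image of an irreducible variety, whence $V_M=\closure{\Gamma_M}$ is irreducible. The natural framework is induction on the solvability depth, i.e.\ the minimal $j$ with $M^j=\emptyset$. In the base case $M^1=\emptyset$, every point of $M$ has degree at most two, and $\Gamma_M$ can be parametrized explicitly: processing the points in an order adapted to the (trivial) nilpotent chain, one takes a product of projective spaces for free points and fibers $\mathbb{P}^1$'s over the previously constructed data for points constrained to a single line. For the inductive step, $M^1=M|Q_M$ is solvable of smaller depth, so $V_{M^1}$ is irreducible by induction; one then adjoins the points of $[d]\setminus Q_M$ over $V_{M^1}$ by free parameters for low-degree points and by line-intersections for points pinned by multiple incidences, realizing $V_M$ as the image of an irreducible total space.

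\textbf{Main obstacles.} In (i), the hard part is ensuring that $\widetilde{\gamma}$ is a \emph{genuine} realization of $M$ rather than merely a point satisfying the circuit dependencies: the perturbation $\widetilde{\gamma}_p$ must avoid introducing accidental collinearities or coincidences with points elsewhere in the configuration, which requires a careful genericity argument isolating a nonempty Zariski-open set in the permitted locus. In (ii), the main difficulty is making the rational parametrization precise and verifying dominance onto $V_M$: the points of $Q_M$ are constrained by several incidences simultaneously, and one must show that for a generic point of $V_M$ the iterative reconstruction succeeds rather than pinning the image to a proper subvariety.
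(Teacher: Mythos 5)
The paper does not prove Theorem~\ref{nil coincide}: it is quoted verbatim from \cite{liwski2024pavingmatroidsdefiningequations}, so there is no internal proof to compare against. Judged on its own, your outline follows the natural inductive strategy, but the extension step in (i) has a genuine order-of-quantifiers gap. You first invoke the inductive hypothesis to get \emph{some} $\widetilde{\gamma}'\in\Gamma_{M\backslash\{p\}}$ near $\gamma|_{[d]\setminus\{p\}}$, and only then try to place $\widetilde{\gamma}_p$ on the plane $W=\sspan\{\widetilde{\gamma}'_i: i\in\ell\setminus\{p\}\}$ close to $\gamma_p$. This fails when $\rank\{\gamma_i:i\in\ell\setminus\{p\}\}\leq 1$, which is perfectly possible for $\gamma\in V_{\CCC(M)}$: the plane $W$ is then not controlled by $\gamma$ and need not pass anywhere near $\gamma_p$. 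Concretely, take $\ell=\{1,2,3\}$ with $\gamma_1=\gamma_2=e_1$, $\gamma_3=e_2$, and $p=3$; the inductive hypothesis may return $\widetilde{\gamma}'_2=e_1+\delta e_3+\delta^2 e_2$, whose span with $e_1$ tends to $\langle e_1,e_3\rangle$, so no point of $W$ is close to $e_2$ and your claim that ``a suitable choice arbitrarily close to $\gamma_p$ always exists'' is false. The fix is not hard but is a real missing idea: you must coordinate the perturbation of $\ell\setminus\{p\}$ with the position of $\gamma_p$ (equivalently, strengthen the inductive statement so that the perturbed line of $\ell$ can be forced to pass near $\gamma_p$, or restructure the induction along the nilpotent chain, perturbing $S_M$ first and then choosing each perturbed line through prescribed nearby points before distributing the degree-$\leq 1$ points on it).

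Part (ii) also has a flaw in the base case: $M^1=\emptyset$ means every point has degree at most two, which is \emph{not} the same as nilpotence, and your point-by-point parametrization (``free points'' plus ``$\mathbb{P}^1$-fibers over a single previously constructed line'') already breaks for the quadrilateral set, where every point lies on two lines and no admissible processing order exists. The standard repair is to parametrize by the \emph{lines} (an open subset of a product of dual projective planes), recover degree-two points as intersections and degree-$\leq 1$ points as fiber coordinates; the same caveat applies to your inductive step, where a line of $M$ need not contain two points of $Q_M$ and hence is not determined by $V_{M^1}$ alone. The overall architecture (dominant image of an irreducible tower, induction on solvability depth) is the right one, but as written both the base case and the gluing of the fibration are incomplete.
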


Since the Euclidean and Zariski closures of the realization space $\Gamma_{M}$ coincide, our general strategy to show that a collection of vectors $\gamma$ lies in $V_{M}$ is to demonstrate that an arbitrarily small perturbation of points can be applied to $\gamma$ to obtain an element in $\Gamma_{M}$. We formalize this in the following definition from \cite{hocsten2004ideals}, where a \emph{perturbation} refers to a motion that can be made arbitrarily small. This concept will be used repeatedly throughout the paper.

\begin{definition}
Let $\gamma \subset \mathbb{C}^3$ be a finite collection of vectors, and let $X$ denote a specific property. We say that a \emph{perturbation} can be applied to $\gamma$ to obtain a new collection of vectors satisfying property $X$ if, for every $\epsilon > 0$, it is possible to choose, for each vector $v \in \gamma$, a vector $\widetilde{v}$ such that $\lVert v - \widetilde{v} \rVert < \epsilon$, in such a way that the new collection satisfies property $X$.

When discussing a perturbation of a $k$-dimensional subspace $S \subset \mathbb{C}^n$, we consider $\{v_{1},\ldots,v_{k}\}$ a basis of it, and apply a perturbation to $\{v_i : i \in [k]\}$ to obtain another $k$-dimensional subspace.
\end{definition}

\subsection{Liftability}\label{liftability}

We now define {\em liftable} matroids. Throughout, let $M$ be a simple rank-three matroid on $[d]$.

\begin{definition}\label{def liftable} We introduce the following notions:
\begin{itemize}
\item Consider a collection of vectors $\gamma = \{\gamma_{i} : i\in [d]\}\subset \CC^{3}$ and a vector $q\in \CC^{3}$. We say that  $\widetilde{\gamma}=\{\widetilde{\gamma}_{i}:i\in [d]\}\subset \CC^{3}$ is a {\em lifting} of $\gamma$ from the vector $q$ if, for each $i\in [d]$, there exists $z_{i}\in \CC$ such that $\widetilde{\gamma}_{i}=\gamma_{i}+z_{i}q$. Furthermore, we say that the lifting is {\em non-degenerate} if not all vectors of $\widetilde{\gamma}$ lie within the same hyperplane.
\item The collection $\gamma$ is said to be liftable from the vector $q$, if the vectors $\gamma_i$ can be lifted from the point $q$ to a collection $\widetilde{\gamma} \in \VCM$ with rank($\widetilde{\gamma}) = 3$.
\item A simple rank-three matroid $M$ is \textit{liftable} if, for any rank-two collection of vectors $\gamma = \{\gamma_i : i \in [d]\}$ in $\mathbb{C}^3$, there exists a non-degenerate lifting of $\gamma$ to a collection of vectors $\widetilde{\gamma} = \{\gamma_i+z_{i}q : i\in [d]\}\in V_{\mathcal{C}(M)}$, from any vector $q$ which is in general position with respect to $\gamma$.
\end{itemize}
\end{definition}

In the next construction, we define the {\it liftability matrix}.

\begin{definition}\label{matrix lift}
Let $M$ be a simple rank-three matroid on $[d]$, and let $q\in \mathbb{C}^{3}$. We define the \textit{liftability matrix} $\mathcal{M}_{q}(M)$ of $M$ and $q$, as the matrix with columns indexed by $[d]$ and rows indexed by the circuits of size three of $M$. The entries of this matrix are defined as follows: for each circuit $c=\{c_{1},c_{2},c_{3}\}\subset [d]$, the $c_{1}^{\text{th}},c_{2}^{\text{th}}$ and $c_{3}^{\text{th}}$ coordinate of the corresponding row are given by the polynomials 
\[ 
[c_{2},c_{3},q], \quad -[c_{1},c_{3},q],\quad \text{and} \quad [c_{1},c_{2},q],
\]
where for the brackets we are using  Notation~\ref{brackets}\ref{brackets notation}. 
The other entries of the row are set to $0$. Note that the entries of the matrix are polynomials, not numbers. Given a collection of vectors $\gamma=\{\gamma_{1},\ldots,\gamma_{d}\}\subset \CC^{3}$, we denote by $\liftmat$ the matrix obtained from $\mathcal{M}_{q}(M)$ by substituting, for each $i\in [d]$, the three variables associated to index $i$ with the entries of $\gamma_{i}$.
\end{definition}

\begin{definition}
    Consider a simple rank-three matroid $M$. We define the {\it lifting ideal}, denoted $I_M^{\text{lift}}$, as the ideal generated by all the $(|N|-2)$-minors of the liftability matrices $\mathcal{M}_q(N)$, where $N$ ranges over all full-rank submatroids of $M$ and $q$ varies over $\CC^3$.
\end{definition}

We recall some of the main properties of the lifting ideal; see Lemma~3.6 and Theorem~3.9 in \cite{liwski2024pavingmatroidsdefiningequations}.

\begin{theorem} \label{thm: gamma i V_M^lift => liftable}
Let $M$ be a simple rank-three matroid on $[d]$.
Then $I_M^\textup{lift} \subseteq I_M$.
Additionally, consider $\gamma=\{\gamma_{1},\ldots,\gamma_{d}\}\subset \CC^{3}$, a collection of vectors in $V(I_{M}^{\text{lift}})$. Then $\gamma$ is liftable from any $q\in \CC^{3}$, and such a lifting can be made arbitrarily small. The same holds for any full-rank submatroid of $M$.
\end{theorem}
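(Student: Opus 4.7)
My plan is to treat the two assertions separately. For the first, $I_M^{\textup{lift}} \subseteq I_M$, the goal is to show that every generator, namely every $(|N|-2)$-minor of $\mathcal{M}_q(N)$ with $N$ a full-rank submatroid of $M$ and $q \in \CC^3$, vanishes on $V_M = \overline{\Gamma_M}$. By density it suffices to check this on $\Gamma_M$. For a realization $\gamma$ of rank three, I would exhibit three linearly independent vectors in the kernel of $\mathcal{M}_q^\gamma(N)$, forcing the rank of the matrix to be at most $|N|-3$. The key identity is the Cramer expansion
\[[\gamma_a, \gamma_b, \gamma_c]\, q \;=\; [q, \gamma_b, \gamma_c]\, \gamma_a + [\gamma_a, q, \gamma_c]\, \gamma_b + [\gamma_a, \gamma_b, q]\, \gamma_c,\]
valid for any four vectors in $\CC^3$. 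Applied to a 3-circuit $c = \{c_1, c_2, c_3\}$, the left-hand side vanishes on a realization; reading off the $j$-th coordinate then shows that the vector $z^{(j)} \in \CC^d$ defined by $z^{(j)}_i = (\gamma_i)_j$ is killed by the $c$-th row of $\mathcal{M}_q^\gamma(N)$. Since $\gamma$ spans $\CC^3$, the vectors $z^{(1)}, z^{(2)}, z^{(3)}$ are linearly independent, yielding the desired three-dimensional kernel.

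For the second assertion, fix $\gamma \in V(I_M^{\textup{lift}})$ and $q \in \CC^3$. By hypothesis, $\mathcal{M}_q^\gamma(M)$ has rank at most $|M|-3$, so its kernel has dimension at least three. A direct multilinear expansion of the determinant, using that quadratic and higher order terms in $z$ vanish because they involve $q$ in multiple slots, shows that for any $z \in \CC^d$ and every 3-circuit $c$,
\[[\widetilde{\gamma}_{c_1}, \widetilde{\gamma}_{c_2}, \widetilde{\gamma}_{c_3}] \;=\; [\gamma_{c_1}, \gamma_{c_2}, \gamma_{c_3}] \;+\; (\text{row}_c \text{ of } \mathcal{M}_q^\gamma(M)) \cdot z,\]
where $\widetilde{\gamma}_i = \gamma_i + z_i q$. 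Hence every $z$ in the kernel produces a lift $\widetilde{\gamma}$ whose brackets along circuits coincide with those of $\gamma$; in particular, when $\gamma$ lies in $V_{\CCC(M)}$ the lift $\widetilde{\gamma}$ automatically lies in $V_{\CCC(M)}$. Because the kernel is a linear subspace, one can scale any chosen $z$ by $\lambda \to 0$ to obtain arbitrarily small lifts; the same argument applies to every full-rank submatroid upon restricting to its index set.

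The main obstacle I expect is verifying the rank-three condition demanded by Definition~\ref{def liftable}. When $\gamma$ already has rank three this is immediate by taking $z = 0$, but when $\gamma$ has rank at most two the lift must genuinely increase the rank. One has to select a kernel vector $z$ for which $\widetilde{\gamma}_i = \gamma_i + z_i q$ spans $\CC^3$, a Zariski open condition whose nonemptiness should follow from a dimension count using the three-dimensional kernel together with general position of $q$ relative to $\gamma$; this is the step where the hypothesis that $q$ lies outside the span of $\gamma$ implicitly enters the argument.
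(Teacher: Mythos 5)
The paper itself gives no proof of this statement: it is recalled verbatim from the reference \cite{liwski2024pavingmatroidsdefiningequations} (Lemma~3.6 and Theorem~3.9 there), so there is no in-paper argument to compare yours against. On its own merits, your proof is correct and is essentially the standard argument. The first half is complete: the Cramer identity is valid for arbitrary vectors in $\CC^3$, the three coordinate vectors $z^{(j)}$ do lie in the kernel of $\liftmat$ for $\gamma\in\Gamma_M$, and they are independent because a full-rank submatroid forces $\rk(\gamma|_N)=3$; hence all $(|N|-2)$-minors vanish on $\Gamma_M$ and, by density, on $V_M$. The multilinear expansion in the second half is also right (the cross terms with $q$ in two slots vanish), and since $I_{\CCC(M)}$ for a simple rank-three matroid is generated by the brackets of its $3$-circuits, a kernel vector preserves membership in $V_{\CCC(M)}$.

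The one step you leave as an expectation rather than a proof is the non-degeneracy count, and your instinct there is correct and can be made precise: for $\rk(\gamma)=2$ and $q\notin\sspan(\gamma)$, the degenerate liftings form exactly a two-dimensional \emph{linear} subspace of $\CC^d$ (each hyperplane $H'=m^{\perp}$ with $\langle m,q\rangle\neq 0$ yields the unique degenerate lift $z_i=-\langle m,\gamma_i\rangle/\langle m,q\rangle$, and this parametrization is linear on the slice $\langle m,q\rangle=1$ with one-dimensional kernel $\gamma^{\perp}$). Since the kernel of $\liftmat$ has dimension at least three, it cannot be contained in this subspace, and because both sets are linear, $\lambda z$ remains a non-degenerate kernel lifting for every $\lambda\neq 0$, which gives the ``arbitrarily small'' clause. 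Be aware, however, that the statement as transcribed is looser than what is provable: if $\rk(\gamma)\le 1$ no lifting from any $q$ reaches rank three, and if $q\in\sspan(\gamma)$ every lifting stays degenerate. The intended (and the only used) hypotheses are that $q$ is in general position with respect to $\gamma$ and that $\gamma$ has rank two, in which case $\gamma\in V_{\CCC(M)}$ automatically; you correctly flag where these enter.
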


\subsection{Grassmann-Cayley algebra}\normalfont

We now review the notion of Grassmann-Cayley algebra from \textup{\cite{AlgorithmsInInvariantTheory}}. The Grassmann-Cayley algebra is the exterior algebra $\bigwedge (\mathbb{C}^{d})$, equipped with two operations: the \textit{join} and the \textit{meet}. The join, or extensor, of vectors $v_{1}, \ldots, v_{k}$ is denoted by $v_{1}  \cdots v_{k}$. The meet operation, denoted by $\wedge$, is defined for two extensors $v = v_{1} \cdots v_{k}$ and $w = w_{1} \cdots w_{j}$, with lengths $k$ and $j$ respectively, where $j + k \geq d$, as:
$$v\wedge w=\sum_{\sigma\in \mathcal{S}(k,j,d)}
\corch{v_{\sigma(1)}\ldots v_{\sigma(d-j)}w_{1}\ldots w_{j}}\cdot v_{\sigma(d-j+1)}\cdots v_{\sigma(k)}$$
where $ \mathcal{S}(k,j,d)$ denotes the set of all permutations of $\corch{k}$ that satisfy $\sigma(1)<\cdots <\sigma(d-j)$ and $\sigma(d-j+1)<\cdots <\sigma(d)$ and the bracket denotes the determinant, see Notation~\ref{brackets}\ref{determinant}. When $j + k < d$, the meet is defined as $0$. There is a correspondence between the extensor $v = v_{1} \cdots v_{k}$ and the subspace $\overline{v} = \langle v_{1}, \ldots, v_{k} \rangle$ that it generates. This correspondence satisfies the following properties:

\begin{lemma}\label{klj}

Let $v=v_{1}\cdots v_{k}$ and $w=w_{1}\cdots w_{j}$ be two extensors with $j+k\geq d$. Then we have:
\begin{itemize}
\item The extensor $v$ is equal to zero if and only if the vectors $v_{1},\ldots,v_{k}$ are linear dependent.
\item Any extensor $v$ is uniquely determined by $\overline{v}$ up to a scalar multiple.
\item The meet of two extensors is again an extensor.
\item We have that $v\wedge w\neq 0$ if and only if $\ip{\overline{v}}{\overline{w}}=\CC^{d}$. In this case, we have $\overline{v}\cap\overline{w}=\overline{v\wedge w}.$
\end{itemize}
\end{lemma}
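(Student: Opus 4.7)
The lemma bundles four classical properties of the exterior algebra $\bigwedge(\mathbb{C}^d)$, so my plan is to prove each one by unwinding the definitions of join and meet and reducing to standard multilinear algebra.

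For (i), I would exploit the antisymmetry and multilinearity of the join. If $v_1,\ldots,v_k$ are linearly dependent, I can write one of them, say $v_k$, as a combination of the others; expanding $v_1\cdots v_k$ by multilinearity then yields a sum of products each containing a repeated factor, hence zero. Conversely, if the $v_i$'s are independent, I extend them to a basis $v_1,\ldots,v_d$ of $\mathbb{C}^d$, so that $v_1\cdots v_d$ is a nonzero top form, and deduce that $v_1\cdots v_k$ is nonzero in $\bigwedge^k\mathbb{C}^d$. For (ii), if $v=v_1\cdots v_k$ and $v'=v'_1\cdots v'_k$ satisfy $\overline{v}=\overline{v'}$, I would write $v'_i=\sum_j a_{ij}v_j$ and expand by multilinearity and antisymmetry to obtain $v'=\det(A)\cdot v$, so that $v$ and $v'$ differ by a nonzero scalar.

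For (iii), the cleanest route is via duality: a choice of volume form on $\mathbb{C}^d$ induces a linear isomorphism $\bigwedge^k\mathbb{C}^d\simeq\bigwedge^{d-k}(\mathbb{C}^d)^*$ sending an extensor for a subspace $U$ to an extensor for its annihilator $U^\perp$. Under this identification, the meet formula defining $v\wedge w$ translates into the join of the images of $v$ and $w$ in the dual, which is manifestly an extensor; translating back shows that $v\wedge w$ is an extensor of length $k+j-d$ in $\bigwedge^{k+j-d}\mathbb{C}^d$. Alternatively, one can prove (iii) directly by induction on the length of one of the factors, reducing to the case where one extensor is a single vector, which is handled by a short computation using the Laplace-type expansion built into the definition.

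Finally, for (iv), I would use the dimension identity $\dim(\overline{v}\cap\overline{w})=k+j-\dim(\overline{v}+\overline{w})$. If $\langle\overline{v},\overline{w}\rangle\ne\mathbb{C}^d$, choose a hyperplane $H$ containing both subspaces; each summand in the defining sum contains a bracket $[v_{\sigma(1)}\cdots v_{\sigma(d-j)} w_1\cdots w_j]$ whose $d$ columns all lie in $H$ and therefore vanish, giving $v\wedge w=0$. Conversely, if the sum is $\mathbb{C}^d$, then by (iii) the meet is an extensor of length $k+j-d\ge 1$ spanning a subspace of $\overline{v}\cap\overline{w}$; a dimension count, combined with the observation that each of the $v_{\sigma(d-j+1)},\ldots,v_{\sigma(k)}$ lies in $\overline{v}$ and, modulo $\overline{w}$, encodes a complementary basis in the quotient $\mathbb{C}^d/\overline{w}$, shows that the span is in fact all of $\overline{v}\cap\overline{w}$.

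The main obstacle will be (iii), since (i), (ii), and half of (iv) reduce to brief multilinear arguments, whereas checking that the rather asymmetric meet formula always yields a decomposable element requires either a clean invocation of the Hodge-star duality or a careful inductive bookkeeping of signs and permutations.
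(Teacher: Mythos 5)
The paper does not prove this lemma at all: it is stated as a recollection of classical facts about the Grassmann--Cayley algebra, with a citation to Sturmfels' \emph{Algorithms in Invariant Theory}, so there is no in-paper argument to compare against. Judged on its own, your sketch is the standard textbook proof and the overall plan is sound: (i) and (ii) are exactly the usual multilinearity/antisymmetry computations, and reducing (iii) to the join via the Hodge-type duality $\bigwedge^k\CC^d\simeq\bigwedge^{d-k}(\CC^d)^*$ is the clean way to see decomposability of the meet. The one place where your outline is thinner than it should be is the converse direction of (iv): knowing by (iii) that $v\wedge w$ is an extensor of length $k+j-d$ does not by itself show it is nonzero, and the remark that the terms lie in $\overline{v}$ only bounds the span inside $\overline{v}$, not inside $\overline{v}\cap\overline{w}$ (for the latter you would also need the sign-commutativity $v\wedge w=\pm\, w\wedge v$, itself a fact requiring proof). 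The cleanest fix, which simultaneously proves nonvanishing and identifies the span, is to use (ii) to replace $v$ and $w$ by extensors built from an adapted basis: pick a basis $u_1,\dots,u_{k+j-d}$ of $\overline{v}\cap\overline{w}$, extend by $a_1,\dots,a_{d-j}$ to a basis of $\overline{v}$ and by $b_1,\dots,b_{d-k}$ to a basis of $\overline{w}$; in the shuffle sum defining the meet the only surviving term is $[a_1\cdots a_{d-j}w_1\cdots w_j]\cdot u_1\cdots u_{k+j-d}$, which is a nonzero multiple of an extensor for $\overline{v}\cap\overline{w}$. With that adjustment your argument is complete.
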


\begin{example}\label{gc3}
Consider the simple rank-three matroid depicted on Figure~\ref{fig:pascal} (b). Since, in any realization, the lines $\{26, 35, 78\}$ are concurrent, the point of intersection of $26$ and $35$ lies on $78$. Thus, the concurrency of the lines $\{26, 35, 78\}$ is equivalent to the vanishing of the following polynomial:
$$(26)\wedge (35)\vee 78=(\corch{235}6-\corch{635}2)\vee 78=\corch{235}\corch{678}-\corch{635}\corch{278}.$$
Therefore, we know that this polynomial is contained in the ideal of the matroid variety.
\end{example}

\subsubsection{Grassmann-Cayley ideal}\label{Grassmann Cayley ideal} 

We define the Grassmann-Cayley ideal using a construction outlined in  \cite{liwski2024pavingmatroidsdefiningequations}.
Let $M$ be a simple rank-three matroid on $[d]$ and let $x\in [d]$. Consider distinct lines $l_{1}\neq l_{2}$ containing $x$ and points $p_{1},p_{2}\in l_{1}$, $p_{3},p_{4}\in l_{2}$. Let $P\in I_{M}$.
Since $x=l_{1}\cap l_{2}$, for any realization $\gamma \in \Gamma_{M}$, we have:
\[
\gamma_{x}=\gamma_{p_{1}}\gamma_{p_{2}}\wedge \gamma_{p_{3}}\gamma_{p_{4}}=\corch{\gamma_{p_{1}},\gamma_{p_{2}},\gamma_{p_{3}}}\gamma_{p_{4}}- \corch{\gamma_{p_{1}},\gamma_{p_{2}},\gamma_{p_{4}}}\gamma_{p_{3}}.  \]
Consequently, the polynomial  obtained from $P$ by replacing the variable $x$ 
with:
\[\corch{p_{1},p_{2},p_{3}}p_{4}-\corch{p_{1},p_{2},p_{4}}p_{3}\]
is also in $I_{M}$.

\begin{definition}
\label{gm} 
Given a simple rank-three matroid $M$, the ideal $G_{M}$ is constructed as follows:
\begin{itemize}
\item Define the set $X_{0}$ as the polynomials generating the ideal $I_{\mathcal{C}(M)}$, i.e., the brackets corresponding to the circuits of size three of $M$, see Remark~\ref{generating set ICM}. For $j \geq 1$, recursively define the set $X_{j}$ as the polynomials obtained from those in $X_{j-1}$ by modifying some of their variables, potentially leaving them unchanged, according to the procedure described above. Note that $X_{0} \subset X_{1} \subset \ldots$.
\item For $j \geq 0$, define the ideal $I_{j}$ as the ideal generated by the polynomials in $X_{j}$. Note that $I_{0} \subset I_{1} \subset \ldots$. Since the polynomial ring is Noetherian, this chain of ideals stabilizes. We denote the stabilized ideal by $G_{M}$.
\end{itemize}
\end{definition}

We recall the following result from \cite{liwski2024pavingmatroidsdefiningequations}, which will be used repeatedly in the proofs that follow.

\begin{proposition} \label{inc G}
Let $M$ be a simple rank-three matroid on $[d]$. 
Then $G_{M}\subset I_{M}$.
Moreover, let $l_{1},l_{2},l_{3}$ be lines of $M$ containing a common point $x\in [d]$. Then 
$\gamma_{l_{1}}\wedge \gamma_{l_{2}}\wedge\gamma_{l_{3}}=0$ 
for any $\gamma\in V_{\CCC(M)}\cap V(G_M)$, meaning that the lines $\gamma_{l_{1}},\gamma_{l_{2}},\gamma_{l_{3}}$ are concurrent in $\mathbb{P}^{2}$.
(If $\gamma_{x}=0$, one may redefine $\gamma_{x}$ to be the point in $\mathbb{P}^{2}$ contained in the intersection of the three lines $\gamma_{l_{1}},\gamma_{l_{2}}$ and $\gamma_{l_{3}}$).
\end{proposition}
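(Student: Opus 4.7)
I would proceed by induction on the index $j$ in the chain $X_0 \subseteq X_1 \subseteq \cdots$ from Definition~\ref{gm}. The base case is Remark~\ref{generating set ICM}, since $X_0$ consists of the three-element circuit brackets and $I_{\CCC(M)} \subseteq I_M$. For the inductive step, suppose $P\in X_{j-1}\subseteq I_M$, and let $P'$ be obtained from $P$ by replacing the column indexed by $x$ with the column $[p_1,p_2,p_3]\,p_4-[p_1,p_2,p_4]\,p_3$, where $p_1,p_2\in l_1$ and $p_3,p_4\in l_2$ lie on distinct lines of $M$ through $x$. The key tool is multigradedness of $I_M$: since scaling any column of $\gamma\in\Gamma_M$ by a nonzero scalar preserves all linear dependence relations, $\Gamma_M$ and hence $V_M$ are invariant under the diagonal $(\CC^{*})^{d}$-action on columns, and $I_M$ is $\mathbb{Z}^d$-graded by columns. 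I may therefore assume $P$ is homogeneous of some degree $e$ in the three variables of column $x$. Simplicity of $M$ ensures that for every $\gamma\in\Gamma_M$, the pair $\gamma_{p_1},\gamma_{p_2}$ spans $\gamma_{l_1}$ and $\gamma_{p_3},\gamma_{p_4}$ spans $\gamma_{l_2}$, while $\gamma_{l_1}\neq\gamma_{l_2}$ as lines in $\mathbb{P}^{2}$; hence
\[
v(\gamma) := [\gamma_{p_1},\gamma_{p_2},\gamma_{p_3}]\,\gamma_{p_4} - [\gamma_{p_1},\gamma_{p_2},\gamma_{p_4}]\,\gamma_{p_3}
\]
is a nonzero vector spanning $\gamma_{l_1}\cap\gamma_{l_2}$. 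Since $\gamma_x$ also spans this line, $\gamma_x=\lambda(\gamma)\,v(\gamma)$ with $\lambda(\gamma)\in\CC^{*}$, so homogeneity yields $P(\gamma)=\lambda(\gamma)^{e}\,P'(\gamma)$. As $P(\gamma)=0$, we deduce $P'\equiv 0$ on $\Gamma_M$ and therefore $P'\in I_M$.

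\textbf{Part 2: Concurrency for $\gamma\in V_{\CCC(M)}\cap V(G_M)$.} Fix three lines $l_1,l_2,l_3$ of $M$ through $x$. Working first in the non-degenerate case where each $\gamma_{l_i}$ spans a $2$-dimensional subspace of $\CC^{3}$, select $p_i,p_i'\in l_i\setminus\{x\}$ with $\gamma_{p_i},\gamma_{p_i'}$ linearly independent (possible exactly when $\gamma_{l_i}$ is a bona fide line in $\mathbb{P}^{2}$). Starting from the bracket $[x,p_3,p_3']\in X_0$, which belongs to $X_0$ because $\{x,p_3,p_3'\}\subseteq l_3$ is a circuit, and substituting $x$ via the pair $l_1,l_2$ as in Definition~\ref{gm}, we obtain
\[
[p_1,p_1',p_2]\,[p_2',p_3,p_3'] - [p_1,p_1',p_2']\,[p_2,p_3,p_3'] \;\in\; G_M.
\]
Its vanishing at $\gamma$ is, by Lemma~\ref{klj}, precisely the statement that the intersection point of $\gamma_{l_1}$ and $\gamma_{l_2}$ lies on $\gamma_{l_3}$, i.e., $\gamma_{l_1}\wedge\gamma_{l_2}\wedge\gamma_{l_3}=0$.

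\textbf{Anticipated obstacle.} The main difficulty is a careful analysis of degeneracies: if some $\gamma_{l_i}$ collapses to a single point in $\mathbb{P}^{2}$, or $\gamma_x=0$, or several $\gamma_y$ with $y\in l_i$ coincide, then some substitutions become trivial or a line of $M$ fails to realize as a genuine line. I would handle these by observing that when fewer than three of the $\gamma_{l_i}$ are bona fide lines the wedge vanishes automatically from its definition, and otherwise the bracket identity above still pins the intersection of the two genuine lines onto the third; the parenthetical remark then legitimizes redefining $\gamma_x$ as the common intersection point, producing a consistent configuration. No new mathematical input is required beyond Lemma~\ref{klj} and the multigrading argument already used in Part~1, only a careful enumeration of the degenerate cases.
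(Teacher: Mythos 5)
The paper does not actually prove this proposition; it is quoted from \cite{liwski2024pavingmatroidsdefiningequations}, so there is no in-paper argument to compare against. Judged on its own, your reconstruction is correct and is the natural argument suggested by Definition~\ref{gm} and Example~\ref{gc3}: for Part~1, the decisive point is exactly the one you identify, namely that on $\Gamma_M$ the substituted column $v(\gamma)$ is a nonzero scalar multiple of $\gamma_x$, and every element of $X_j$ is multihomogeneous in the columns by construction (so you need not invoke the multigrading of $I_M$ to ``assume'' homogeneity --- it is automatic), whence $P'(\gamma)=\lambda(\gamma)^{-e}P(\gamma)=0$ on $\Gamma_M$ and $P'\in I_M$. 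For Part~2, your $X_1$-polynomial $[p_1p_1'p_2][p_2'p_3p_3']-[p_1p_1'p_2'][p_2p_3p_3']$ is indeed in $G_M$ and its vanishing pins $\gamma_{l_1}\cap\gamma_{l_2}$ onto $\gamma_{l_3}$. One remark on your case layout: the situation $\gamma_x=0$ is not a side degeneracy but the only case with content --- if $\gamma_x\neq 0$ then $\gamma_x$ lies in all three subspaces $\gamma_{l_i}$ and concurrency is immediate --- and this observation also repairs the small inaccuracy in your parenthetical, since the only way $\gamma_{l_i}$ can be two-dimensional while no independent pair exists in $l_i\setminus\{x\}$ is when $\gamma_x\neq 0$, where nothing remains to prove. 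With that reorganization your sketch of the degenerate cases closes completely.
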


\section{Cactus configurations}\label{section cactus}

In this section, we introduce cactus configurations and find a generating set for their associated matroid ideals, as well as the decomposition of their circuit varieties.

\smallskip
We begin by fixing the notation that will be used throughout this section. Throughout, we alternate freely between projective and affine language.

\begin{notation} 
Let $M$ be a simple matroid of rank three on $[d]$. 
\begin{itemize}
\item For a collection of vectors $\gamma=\{\gamma_{1},\ldots,\gamma_{d}\}\in V_{\CCC(M)}$ and any line $l$ of $M$ we denote by $\gamma_{l}\subset \CC^{3}$ the following two-dimensional subspace
$\gamma_{l}=\sspan \{\gamma_{i}:i\in l\}$.
\item The subspace $\gamma_{l}$ corresponds to a unique line in the projective plane $\mathbb{P}^{2}$, and we will therefore often refer to $\gamma_{l}$ simply as a {\em line}.
\item Given that each nonzero vector $\gamma_{i}$ determines a point in $\mathbb{P}^{2}$, we use the terms {\em vector} and {\em point} interchangeably when $\gamma_{i}\neq 0$, depending on whether we are viewing it on $\CC^{3}$ or $\mathbb{P}^{2}$.
\item When two nonzero vectors $\gamma_{i}$ and $\gamma_{j}$ are linearly dependent, we may say they are the {\em same point} or that they {\em coincide}, denoted with $=$, reflecting that they are the same point in $\mathbb{P}^{2}$.
\item If $\gamma_{i}=0$, we refer to $\gamma_{i}$ as a {\em loop}.
\item Although it may seem natural to work entirely in the projective setting, the possible occurrence of zero vectors does not allow this simplification.
\end{itemize}
\end{notation}

\subsection{Irreducibility of matroid varieties of cactus configurations}
In this subsection, we introduce the concept of cactus configurations and establish the irreducibility of their associated matroid varieties. We begin by introducing some necessary notions.

\begin{definition}\label{definition n-cycle}
Let $M$ be a simple matroid of rank at most three on $[d]$.
\begin{itemize}
\item  If $M$ is the uniform matroid $U_{2,d}$, we refer to $M$ as a {\em line}; see Definition~\ref{uniform 3}.
\item Suppose $M$ has $n$ lines. We say that $M$ is a {\em cycle} if there exists a subset of points $\{p_{1},\ldots,p_{n}\}\subset [d]$ together with an ordering $\{l_{1},\ldots,l_{n}\}$ of its lines, such that:
\begin{enumerate}
\item Each point $p_{i}$	
  is incident to precisely two lines, satisfying $\mathcal{L}_{p_{i}}=\{l_{i},l_{i+1}\}$, for each $i\in [n]$, where we identify $l_{n+1}$ with $l_{1}$.
\item Every other point $p\in [d]\backslash \{p_{1},\ldots,p_{n}\}$ is incident to exactly one line, meaning $\size{\mathcal{L}_{p}}=1$.
\end{enumerate}
\end{itemize}
\end{definition}

\begin{definition}
Let $M$ and $N$ be simple matroids of rank at most three on $[d_{1}]$ and $[d_{2}]$, respectively, and consider points $p\in [d_{1}]$ and $q\in [d_{2}]$. We define the {\em free gluing} of $M$ and $N$ at $p$ and $q$ as the simple rank-three matroid $M \amalg_{p,q} N$ on the ground set 
\[([d_{1}]\backslash \{p\})\amalg ([d_{2}]\backslash \{q\})\cup \{P\},\]
where $P$ is a new point that identifies both $p$ and $q$. The set of lines of $M \amalg_{p,q} N$ is given by:
\[
\mathcal{L}_{M \amalg_{p,q} N}= \{(l\backslash \{p\})\cup \{P\}:l\in \mathcal{L}_{p}\}\cup
\{(l\backslash \{q\})\cup \{P\}:l\in \mathcal{L}_{q}\}
\cup (\mathcal{L}_{M}\backslash \mathcal{L}_{p})\cup (\mathcal{L}_{N}\backslash \mathcal{L}_{q}).
\]
\end{definition}

We now illustrate the above definition with  two examples as follow. 

\begin{example}
Let $M$ denote the $3 \times 3$ grid configuration, and let $F$ be the Fano plane, as depicted in Figure~\ref{fig:preliminaries gemengd}. Figure~\ref{example free gluing} (Left) shows the free gluing $M\amalg_{7,7'}F$ at the points $7$ and $7'$.
    
Similarly, consider the configuration consisting of three concurrent lines and the quadrilateral set shown in Figure~\ref{fig:combined}. Their free gluing along the points $3$ and $3'$ is shown in Figure~\ref{example free gluing} (Right).
\end{example}

\begin{figure}[h]
    \centering
    \includegraphics[width=0.6\linewidth]{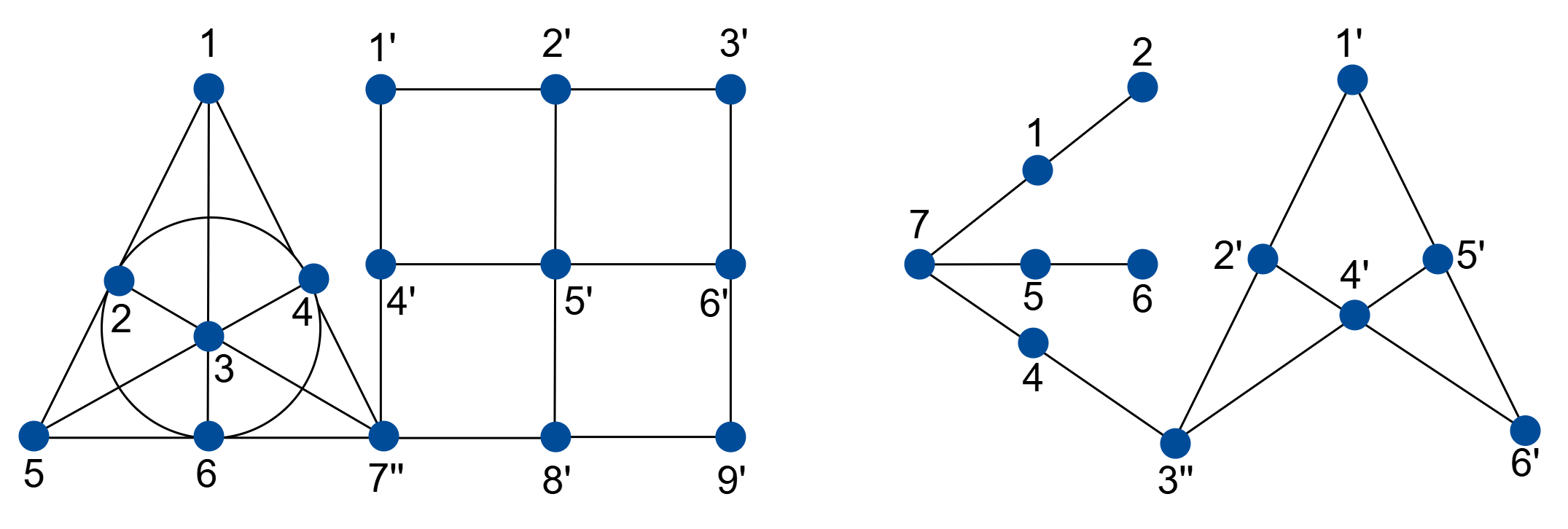}
    \caption{(Left) Free gluing of the Fano plane and the $3 \times 3$ grid; (Right) Free gluing of three concurrent lines and the quadrilateral set.}
    \label{example free gluing}
\end{figure}

We can think of the free gluing $M \amalg_{p,q} N$ as the simple rank-three matroid obtained by gluing $M$ and $N$ at the points $p$ and $q$ in the most ``free" or independent way, by preserving all dependencies of $M$ and $N$ without introducing any new ones.
This notion of free gluing is closely related to the classical concept of {\em amalgamation} of matroids~\cite{poljak1984amalgamation}. Given matroids $M$ and $N$ on ground sets $[d_{1}]$ and $[d_{2}]$, an amalgamation is a matroid whose restrictions to $[d_{1}]$ and $[d_{2}]$ agree with $M$ and $N$, respectively.

\medskip
We are now prepared to define cactus configurations.

\begin{definition}\label{cactus}
A simple rank-three matroid $M$ is called a \emph{connected cactus} configuration if it can be obtained by inductively freely gluing lines or cycles. More precisely, there exists a sequence of matroids $N_{1},\ldots,N_{k}$ such that:
\begin{itemize}
\item $N_{1}$ is either a line or a cycle.
\item For each $i\in [k-1]$, $N_{i+1}$ is the free gluing of $N_{i}$ with a line or a cycle.
\item $N_{k}=M$.
\end{itemize}
More generally, we say that a simple rank-three matroid is a {\em cactus configuration} if each of its connected components is a connected cactus configuration.
\end{definition}

\begin{figure}
    \centering
    \includegraphics[width=0.7\linewidth]{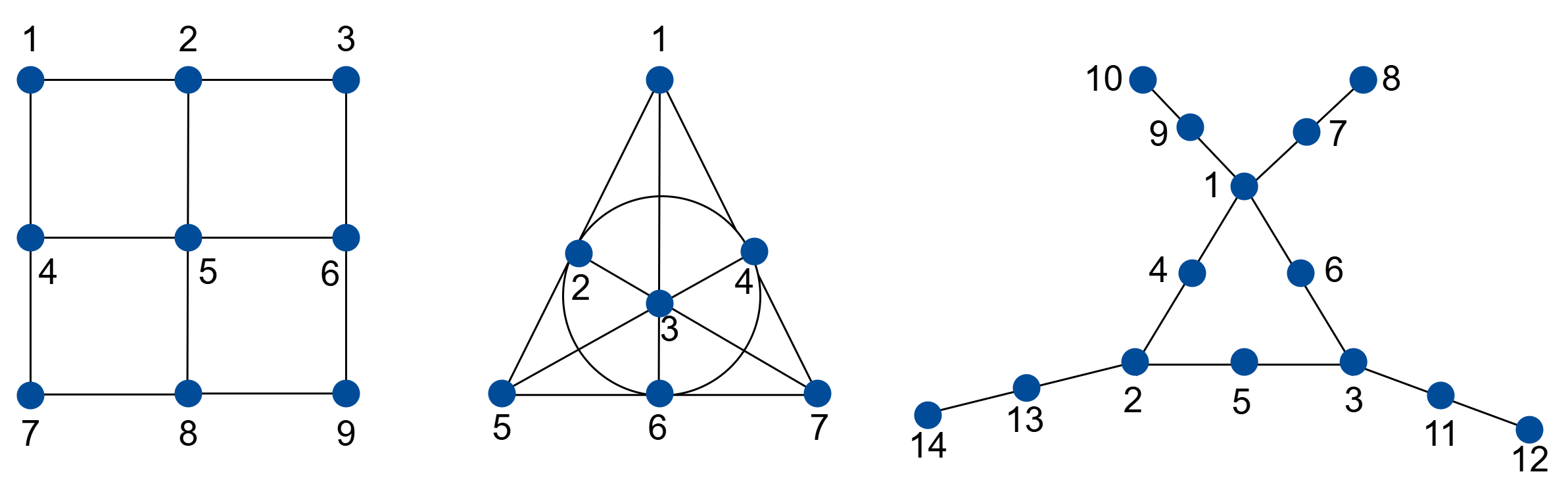}
    \caption{(Left) The $3\times 3$ grid; (Center) Fano plane; (Right) Cactus configuration.}
    \label{fig:preliminaries gemengd}
\end{figure}

An example of a cactus configuration is shown in Figure~\ref{fig:preliminaries gemengd}~(Right). Our definition of cactus configurations is closely inspired by the notion of cactus graphs in graph theory. 
A \emph{cactus} is a connected graph in which any two simple cycles share at most one vertex. Equivalently, it is a connected graph in which each edge is contained in at most one simple cycle. The following well-known property holds for cactus graphs.

\begin{lemma}\label{property}
Every cactus $G$ contains a subgraph $H$, which is either an edge or a cycle, such that at most one vertex of $H$ is adjacent to a vertex in $G\backslash H$.
\end{lemma}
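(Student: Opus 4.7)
The plan is to use the standard block decomposition of $G$. Recall that a \emph{block} of a connected graph is a maximal connected subgraph with no cut vertex, and the blocks together with the cut vertices form a tree $T(G)$, the block-cut tree. As a preliminary observation, I would note that every block of a cactus is either a single edge or a simple cycle: a block containing more than one cycle would contain two cycles sharing at least an edge (indeed, any two cycles in a 2-connected graph can be linked through edge-disjoint paths, forcing edge overlap in a block strictly bigger than a cycle), which contradicts the cactus property.

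The argument then splits into two cases. If $G$ consists of a single block, then $G$ itself is an edge or a cycle, and taking $H=G$ makes $V(G)\setminus V(H)$ empty, so the condition holds vacuously. Otherwise $T(G)$ is a tree with at least two vertices, hence at least two leaves; and every leaf of $T(G)$ corresponds to a block, since a cut vertex in $T(G)$ always has degree at least two. Choose any such leaf block $H$. By the preliminary observation, $H$ is an edge or a cycle. Moreover, being a leaf means that $H$ contains exactly one cut vertex $v$ of $G$.

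Finally, I would verify the adjacency condition. Suppose some $u\in V(H)$ is adjacent to a vertex $w\in V(G)\setminus V(H)$. Then the edge $uw$ lies in some block $B$, and $B\neq H$ because $w\notin V(H)$. Hence $u\in V(H)\cap V(B)$ lies in two distinct blocks, which forces $u$ to be a cut vertex of $G$. Since $v$ is the unique cut vertex contained in $H$, we conclude $u=v$. Thus at most one vertex of $H$, namely $v$, is adjacent to any vertex outside $V(H)$, completing the proof.

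The main (mild) obstacle is merely the bookkeeping with the block-cut tree and the justification that every block of a cactus is either an edge or a cycle; both are essentially standard, and once they are in place the lemma reduces to the familiar fact that leaf blocks of a connected graph are pendant in the sense required.
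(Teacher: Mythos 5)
The paper states Lemma~\ref{property} as a well-known fact and gives no proof at all, so there is no argument of the paper's to compare against; your block-decomposition proof is a correct and standard way to supply the missing argument. The structure is sound: leaf blocks of the block-cut tree exist whenever there is more than one block, a leaf block contains exactly one cut vertex, and any vertex of $H$ with a neighbour outside $H$ lies in a second block and is therefore a cut vertex, hence equal to that unique one. Two minor points. First, your parenthetical justification that every block of a cactus is an edge or a cycle (``any two cycles in a 2-connected graph can be linked through edge-disjoint paths, forcing edge overlap'') is vague as written; the clean argument is via the ear decomposition: a $2$-connected block that is not a cycle is a cycle plus at least one ear, and attaching an ear with endpoints $u,v$ produces two cycles sharing all the edges of that ear, contradicting the paper's characterization that each edge lies in at most one simple cycle. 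Since this equivalence is itself standard, I would not call this a gap, but it deserves a cleaner sentence. Second, the statement is vacuously problematic for a one-vertex cactus, which contains neither an edge nor a cycle; the paper sidesteps this by treating the single-vertex case separately in the induction where the lemma is applied, and you may wish to note the same convention.
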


We first provide an alternative characterization of cactus configurations in terms of cactus graphs.

\begin{lemma}
To each simple matroid $M$ of rank at most three, we associate a simple graph $G(M)$ defined as follows:
\begin{itemize}
\item The vertices of $G(M)$ correspond to the points in $M$ of degree at least two.
\item Two vertices of $G(M)$ are joined by an edge if and only if the corresponding points in $M$ lie on a common line. 
\end{itemize}
Then $M$ is a cactus configuration if and only if $G(M)$ is a cactus graph.
\end{lemma}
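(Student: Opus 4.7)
The plan is to prove both implications by induction, mirroring on one side the recursive construction of $M$ from lines and cycles, and on the other side the structural decomposition of cacti provided by Lemma~\ref{property}.

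\smallskip
\noindent\textbf{Forward direction.} Assuming $M$ is a connected cactus configuration with construction sequence $N_1, \ldots, N_k = M$, I would induct on $k$. The base case is immediate: if $N_1 = U_{2,d}$ is a line, every point has degree one and $G(N_1)$ is empty, while if $N_1$ is a cycle with corners $p_1, \ldots, p_n$, then $V(G(N_1)) = \{p_1, \ldots, p_n\}$ and each line $l_i$ contributes exactly the single edge $p_{i-1}p_i$, producing an $n$-cycle. For the inductive step, writing $N_{i+1} = N_i \amalg_{p,q} L$, I would describe $G(N_{i+1})$ explicitly as obtained from $G(N_i) \sqcup G(L)$ by replacing the images of $p$ and $q$ with the new point $P$ (which is a vertex of $G(N_{i+1})$ exactly when $\deg_{N_i}(p) + \deg_L(q) \geq 2$) and by extending the cliques associated to the lines through $p$ in $N_i$ and through $q$ in $L$. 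The disconnected case then follows by applying the connected case to each component. For the disjoint-union convention, I would interpret an empty graph or an isolated vertex as a trivial cactus.

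\smallskip
\noindent\textbf{Reverse direction.} Assuming $G(M)$ is a cactus, I would induct on the number of vertices of $G(M)$. Applying Lemma~\ref{property}, I extract a subgraph $H \subseteq G(M)$ which is either a single edge or a simple cycle, sharing at most one vertex $v$ with the remainder. The key step is to lift $H$ to a sub-configuration $L \subseteq M$ that is itself a line or cycle of the matroid, attached to the rest of $M$ at the point corresponding to $v$. Concretely, an edge $H$ corresponds to a line of $M$ whose only degree-$\geq 2$ point is $v$, and a cycle $H$ of length $n$ corresponds to $n$ lines of $M$ whose shared structure satisfies the cycle axioms of Definition~\ref{definition n-cycle}. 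Removing the non-shared points of $L$ yields a smaller simple rank-three matroid $M'$ with $G(M')$ still a cactus (a proper subgraph of $G(M)$), and by induction $M'$ is a cactus configuration; the equality $M = M' \amalg_{v,v'} L$ then exhibits $M$ as a cactus configuration.

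\smallskip
\noindent\textbf{Main obstacle.} The principal difficulty lies in the reverse direction, in ensuring that the graph-theoretic block $H$ provided by Lemma~\ref{property} really lifts to a single line or a genuine cycle sub-configuration of $M$, rather than to a union of lines accidentally producing the same clique or cycle pattern in $G(M)$. This requires carefully analyzing how a clique $K_r$ in $G(M)$ can arise, using that in a cactus $G(M)$ each such clique has $r \leq 3$ and that each edge lies in at most one cycle, to conclude that the corresponding degree-$\geq 2$ points of $M$ are collinear through a common matroid line. A parallel care is needed in the forward direction when $\deg_{N_i}(p) = 1$ or $\deg_L(q) = 1$: the point $P$ may enter $V(G(N_{i+1}))$ freshly, possibly enlarging a clique from a line of $N_i$, and one must verify that this enlargement is consistent with the cactus property, which again reduces to controlling the possible clique sizes at each step.
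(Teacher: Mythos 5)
Your proposal follows essentially the same route as the paper: both directions are proved by induction, with the reverse direction relying on Lemma~\ref{property} to peel off an edge or cycle block $H$ of $G(M)$ and realize $M$ as a free gluing $M'\amalg_{p,q}N$, and the forward direction tracking how $G(M'\amalg_{p,q}N)$ is assembled from $G(M')$ and $G(N)$ sharing the single vertex of the gluing point. The subtlety you flag --- that a triangle (or larger clique) in $G(M)$ may arise from a single matroid line containing three points of degree at least two rather than from a genuine cycle sub-configuration --- is real, and the paper's proof passes over it without comment, so your more cautious treatment is a refinement of, not a departure from, the paper's argument.
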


\begin{proof}
We may assume $M$ is connected, as the argument applies to each component separately.

First suppose that $G(M)$ is a cactus. We prove that $M$ is a cactus configuration by induction on the number of vertices of $G(M)$.

\medskip
{\bf Base case}: If $G(M)$ has only one vertex, then the statement is clear.

\medskip
{\bf Inductive step:} By Lemma~\ref{property}, there exists a subgraph $H$, which is either an edge or a cycle, such that at most one of its vertices is adjacent to a vertex in $G(M)\backslash H$. Let $x$ denote this vertex. We then obtain that $M$ is of the form
$M=M\rq \amalg_{p,q} N$,
where $G(M')=H$, and $G(N)=(G(M)\backslash H)\cup \{x\}$, 
and the gluing is performed at the point corresponding to $\{x\}$. Since $H$ is an edge or a cycle in $G(M)$, $M'$ is a line or a cycle configuration. Moreover, since $(G(M)\backslash H)\cup \{x\}$ is a cactus graph, the inductive hypothesis implies that $N$ is a cactus configuration. We thus conclude that $M$ is a cactus configuration.

\medskip

Conversely, suppose that $M$ is a cactus configuration. By Definition~\ref{cactus}, $M$ is constructed through a sequence of gluings of lines or cycles. We proceed by induction on the number $n$ of such gluings.

\medskip
{\bf Base case}: If $n = 0$, then $M$ consists of a single line or cycle, in which case the statement is clear.

\medskip
{\bf Inductive step}: We know that $M$ is of the form
$M=M' \amalg_{p,q} N$,
where $M'$ is a line or a cycle and $N$ is a cactus configuration. Passing to the associated graphs, we see that $G(M')$ and $G(N)$ are subgraphs of $G(M)$ sharing a unique vertex $x$ which corresponds to the point of gluing. Furthermore, $G(M')$ is an edge or a cycle of $G(M)$ and $G(N)$ is a cactus graph by inductive hypothesis. Moreover, the gluing condition ensures that there is no edge in $G(M)$ connecting a vertex in  $G(M')\backslash \{x\}$ to a vertex in $G(N)\backslash \{x\}$. Hence, $G(M)$ is also a cactus graph, completing the proof.
\end{proof}

We next prove that every cactus configuration is nilpotent; recall Definition~\ref{nil sol}.

\begin{proposition} \label{thm: cactus-like config is nilpotent}
Every cactus configuration is nilpotent.
\end{proposition}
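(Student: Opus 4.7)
The plan is to proceed by strong induction on $|S_M|$, the number of points of $M$ of degree at least two. In the base case $|S_M|=0$, we have $M_1=M|S_M=\emptyset$, so $M$ is nilpotent. For the inductive step, assume the result holds for every cactus configuration with strictly smaller $|S|$, and suppose $|S_M|\geq 1$. The argument reduces to two claims: first, that $M|S_M$ is again a cactus configuration; second, that $|S_{M|S_M}|<|S_M|$. Granting both, the inductive hypothesis applied to $M|S_M$ gives that $M|S_M$ is nilpotent, and hence $M$ is nilpotent.

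For the first claim, by the equivalence between cactus configurations and cactus graphs established in the preceding lemma, it suffices to show that $G(M|S_M)$ is a cactus graph. The lines of $M|S_M$ correspond exactly to lines of $M$ containing at least three points of $S_M$, each of which yields a triangle in $G(M)$. Since $G(M)$ is a cactus and hence contains no $K_4$, every such line has exactly three such points. Consequently $V(G(M|S_M))\subseteq S_M$ and every edge of $G(M|S_M)$ comes from an edge of $G(M)$, so $G(M|S_M)$ is a subgraph of $G(M)$; since any subgraph of a cactus graph is again a cactus, the first claim follows.

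For the second claim, I would apply Lemma~\ref{property} to $G(M)$ to extract a leaf subgraph $H$ that is either an edge or a cycle, with at most one vertex adjacent to $G(M)\setminus H$. Let $v\in V(H)$ be a vertex whose $G(M)$-neighbors all lie in $V(H)$. If $H$ is an edge $\{u,v\}$, then $v$ has only the single neighbor $u$, so no triangle of $G(M)$ passes through $v$, and hence $v$ lies on no line of $M|S_M$. If instead $H$ is a cycle of length $k$ with vertices $v_1,\ldots,v_k$ and $v=v_i$, the $G(M)$-neighbors of $v$ are exactly $\{v_{i-1},v_{i+1}\}$; for $k\geq 4$, a chord $\{v_{i-1},v_{i+1}\}$ in $G(M)$ would create a triangle sharing more than one vertex with $H$, contradicting the cactus property, so no $3$-junction line passes through $v$. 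For $k=3$, the vertex $v$ lies on at most one line of $M|S_M$, depending on whether $H$ arises from a single $3$-junction line of $M$ or from three pairwise distinct $2$-junction lines forming a matroid triangle. In every case $v\notin S_{M|S_M}$, giving the strict decrease.

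The main obstacle is the case analysis in the second claim, particularly for length-$3$ leaf cycles in $G(M)$, where one must correctly distinguish the triangles coming from $3$-junction lines of $M$ from those coming from genuine matroid triangles. The cactus property of $G(M)$ enters crucially in two ways: to exclude $K_4$ subgraphs, ensuring that lines of $M|S_M$ have at most three points, and to exclude chords in cycles of length $\geq 4$, ensuring that leaf vertices are not incident to unexpected $3$-junction lines.
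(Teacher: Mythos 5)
Your argument is correct in substance but follows a genuinely different route from the paper. The paper inducts on the number of free gluings in the construction of $M$: writing $M=M'\amalg_{p,q}N$ with $N$ a line or cycle, it tracks how the chain $M, S_M, S_{S_M},\ldots$ of Definition~\ref{nil sol} interacts with the last-glued piece, showing that after at most three steps the chain lands inside the smaller configuration $M'$. You instead induct on $|S_M|$ and reduce everything to two claims: that $M|S_M$ is again a cactus configuration (via $G(M|S_M)$ being a subgraph of the cactus $G(M)$, using the graph characterization lemma in both directions), and that $|S_{M|S_M}|<|S_M|$ (via Lemma~\ref{property} and a leaf-vertex analysis). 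Your approach buys a cleaner structural statement — the class of cactus configurations is closed under $M\mapsto M|S_M$ and this operation strictly shrinks $S_M$ — and it exploits the graph lemma, which the paper proves but then does not use for this proposition; the paper's approach stays closer to the inductive definition and avoids the triangle/chord case analysis in $G(M)$. Your observation that cacti are $K_4$-free, so every line of $M|S_M$ meets $S_M$ in exactly three points, and your treatment of the $k=3$ leaf cycle (at most one line of $M$ can have trace $\{v_1,v_2,v_3\}$ on $S_M$, since distinct lines of a simple rank-three matroid meet in at most one point) are the right ingredients and do go through.

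One small gap to patch: Lemma~\ref{property} produces an edge or a cycle, so it says nothing about components of $G(M)$ consisting of a single isolated vertex, and these do occur for cactus configurations (e.g.\ two lines freely glued at a point $P$ give $S_M=\{P\}$ and $G(M)$ a single vertex). In that situation your extraction of a leaf subgraph $H$ fails as stated; however, an isolated vertex $v$ of $G(M)$ shares no line of $M$ with any other point of $S_M$, hence lies on no line of $M|S_M$ and already witnesses the strict decrease. Adding this case (or first disposing of components with fewer than two vertices) completes the argument. You should also state explicitly, rather than only for the chord $\{v_{i-1},v_{i+1}\}$, that the cactus property forbids all chords of $H$ incident to $v$, which is what guarantees that $v$'s neighbours are exactly $v_{i-1}$ and $v_{i+1}$.
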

\begin{proof}
Let $M$ be a cactus configuration. We may assume $M$ is connected, as the argument applies to each component separately.
By Definition~\ref{cactus}, $M$ is constructed through an iterative process of gluing lines or cycles. We establish the result by induction on the number $n$ of such gluings.

\medskip
{\bf Base case}: If $n = 0$, then $M$ consists of a single line or cycle, in which case the statement is clear. 

\medskip
{\bf Induction step}: Assume that the statement holds for any cactus configuration constructed from at most $n-1$ gluings, and consider a configuration $M$ obtained through $n$ gluings. At the $n^{\text{th}}$-step, $M$ takes the form
$M=M\rq \amalg_{p,q} N$,
where $M\rq$ is constructed from $n-1$ gluings, and $N$ is either a line or a cycle. Denote the ground sets of $M\rq$ and $N$ by $[d_{1}]$ and $[d_{2}]$, respectively. The ground set of $M\rq \amalg_{p,q} N$ is then given by
$([d_{1}]\backslash p)\amalg ([d_{2}]\backslash q)\cup \{P\}$,
where $P$ is the point that identifies both $p$ and $q$.

\medskip
{\bf Case~1.} Suppose that $N$ is a line, or equivalently $N=U_{2,d_{2}}$. In this case, the points in $[d_{2}]\backslash P$ have degree one in $M\rq \amalg_{p,q} N$, which implies that $S_{M}\subset M\rq$ as a submatroid. By induction, $M\rq$ is nilpotent, and since any submatroid of a nilpotent matroid is also nilpotent, it follows that $S_{M}$ is also nilpotent. Consequently, the nilpotent chain of $S_{M}$, as defined in Definition~\ref{nil sol}, eventually reaches the empty set, and the same must hold for $M$.

\medskip
{\bf Case~2.}
Now assume that $N$ is a cycle. If $q$ has degree 2 in $N$, then for every line $l$ of $N$, we have $\size{S_{M} \cap l} = 2$. Consequently, $S_{M} \cap [d_{2}]$ contains no lines, since a line must contain at least three points. This implies that $S_{S_{M}} \cap [d_{2}] \subset \{P\}$, and thus $S_{S_{M}}$ is a submatroid of $M'$. The result then follows by applying the same argument as in the previous case.
If $q$ has degree 1 in $N$, then there exists a line $l$ in $N$ for which $\size{S_{M} \cap l} = 3$, and for all other lines $l \in \mathcal{L}_N$, $\size{S_{M} \cap l} = 2$. Then, for each line $l$ of $N$, we have $\size{S_{S_M} \cap l} \leq 2$. Thus, $S_{S_M} \cap [d_2]$ contains no lines, and using the notation from Definition~\ref{nil sol}, we have $M_3 \cap [d_2] \subset \{P\}$. The result again follows by applying the same argument as in the previous case. This completes the proof.
\end{proof}

We now prove the irreducibility of matroid varieties associated with cactus configurations. 

\begin{theorem}\label{cact irr}
Every cactus configuration $M$ is realizable, and its matroid variety $V_M$ is irreducible.
\end{theorem}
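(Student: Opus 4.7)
The plan is to prove both statements simultaneously by induction on the number of free gluings in the cactus construction of $M$. We may assume $M$ is connected, since a product of irreducible varieties is irreducible and the disconnected case reduces to the connected one.

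For the base case, $M$ is either a single line $U_{2,d}$ or a cycle. If $M=U_{2,d}$, both claims are immediate: the realization space is a dense open subset of an irreducible linear-determinantal variety in $\CC^{3d}$. If $M$ is a cycle with lines $l_{1},\ldots,l_{n}$ and pinning vertices $p_{1},\ldots,p_{n}$, I parametrize $V_{M}$ by first choosing $n$ generic lines $L_{1},\ldots,L_{n}\subset \mathbb{P}^{2}$, then setting $\gamma_{p_{i}}=L_{i}\cap L_{i+1}$, and finally placing each remaining point freely on its corresponding line. For generic choices the lines are in general position, so no three are concurrent and no vertex lies on a non-adjacent line, ensuring no accidental dependencies. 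This yields a dominant morphism from an irreducible variety to $V_{M}$, establishing both realizability and irreducibility.

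For the inductive step, write $M=M'\amalg_{p,q}N$, where $N$ is a line or a cycle and $M'$ is a cactus configuration with one fewer gluing. By the inductive hypothesis, $M'$ and $N$ are realizable with $V_{M'}$ and $V_{N}$ irreducible. Consider the parameter variety
\[W=\{(\gamma',\gamma'',T)\in V_{M'}\times V_{N}\times \textup{GL}_{3}:T\gamma''_{q}=\gamma'_{p}\},\]
together with the gluing morphism $\Phi\colon W\to \CC^{3d}$ defined by taking $\gamma'_{i}$ for $i\in [d_{1}]\setminus\{p\}$, $T\gamma''_{j}$ for $j\in [d_{2}]\setminus\{q\}$, and $\gamma'_{p}$ for the glued point $P$. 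Irreducibility of $W$ follows from the observation that, on the open locus where $\gamma''_{q}\neq 0$, the projection $(\gamma',\gamma'',T)\mapsto (\gamma',\gamma'')$ exhibits $W$ as a bundle over the irreducible base $V_{M'}\times V_{N}$ with six-dimensional irreducible fibers, each a coset of the stabilizer of $\gamma''_{q}$ in $\textup{GL}_{3}$. Since projective transformations preserve linear dependencies, $\Phi(W)\subset V_{\mathcal{C}(M)}$. The key claim is that for generic $(\gamma',\gamma'',T)\in W$, the image $\Phi(\gamma',\gamma'',T)$ lies in $\Gamma_{M}$: by the definition of free gluing, $M$ has no circuits mixing $[d_{1}]\setminus\{p\}$ with $[d_{2}]\setminus\{q\}$ other than those inherited via the glued point $P$, so each potential spurious circuit imposes a proper closed condition on $W$. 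Since there are finitely many such conditions, their union is a proper closed subvariety, and its complement is dense open. Hence $\Phi(W)\cap \Gamma_{M}$ is nonempty (proving realizability), and $\overline{\Phi(W)}\subset V_{M}$ is irreducible. A dimension count, together with the fact that every realization in $\Gamma_{M}$ decomposes into realizations of $M'$ and $N$ related by some $T\in \textup{GL}_{3}$, shows $\overline{\Phi(W)}=V_{M}$, finishing the induction.

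The main obstacle is the key claim, namely verifying rigorously that for a generic $T\in \textup{GL}_{3}$ satisfying the gluing condition, no unintended collinearity occurs between transformed points of $N$ and points of $M'$. Concretely, one must argue that each candidate spurious incidence, such as a triple $\{\gamma'_{a},T\gamma''_{b},T\gamma''_{c}\}$ with $a\in [d_{1}]\setminus\{p\}$ and $b,c\in [d_{2}]\setminus\{q\}$ not forming a circuit in $M$, carves out a proper subvariety of the six-dimensional family of admissible transformations. This amounts to a careful, finite case analysis exploiting the structure of the free gluing together with the freedom in the choice of $T$.
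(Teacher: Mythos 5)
Your route is genuinely different from the paper's. The paper does not construct realizations or parametrize $V_M$ at all: it proves (Proposition~\ref{thm: cactus-like config is nilpotent}) that every cactus configuration is nilpotent in the sense of Definition~\ref{nil sol}, by induction on the number of gluings, and then invokes the external result that nilpotent matroids are realizable with irreducible matroid variety \cite[Theorem~4.12]{liwski2024pavingmatroidsdefiningequations}. Your argument is self-contained and constructive, and as a by-product it exhibits an explicit dominant parametrization of $V_M$; the paper's argument is two lines long but outsources all the work to the cited theorem. Your decomposition of a realization of $M=M'\amalg_{p,q}N$ into realizations of $M'$ and $N$ (take $T=\Id$), and the resulting inclusion $\Gamma_M\subseteq\Phi(W)$, are correct because the free gluing restricts to $M'$ and $N$ on the two halves of the ground set; the base cases are also fine.

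There is, however, a gap exactly where you flag it: the ``key claim'' that a generic point of $W$ maps into $\Gamma_M$ is asserted, not proved, and without it you get neither realizability of $M$ nor the inclusion $\overline{\Phi(W)}\subseteq V_M$. The claim is true and the verification is short, so you should carry it out. For a basis $\{a,b,c\}$ of $M$ entirely inside $[d_1]$ or entirely inside $[d_2]$, independence is automatic since the restrictions of $M$ to the two halves are $M'$ and $N$ and $\gamma',\gamma''$ are realizations. For a mixed triple, use that $\gamma'_p\neq 0$ and $\gamma''_q\neq 0$ (no loops in a realization of a simple matroid), so the fibre $\{T: T\gamma''_q=\gamma'_p\}$ acts transitively enough: for $b\in[d_2]\setminus\{q\}$ the vector $T\gamma''_b$ ranges over a dense subset of $\CC^3$ as $T$ varies in the fibre, and for $b,c\in[d_2]\setminus\{q\}$ with $\{b,c,q\}$ independent in $N$ the pair $(T\gamma''_b,T\gamma''_c)$ ranges over a dense set of pairs completing $\gamma'_p$ to a basis; when $\{b,c,q\}$ lies on a line of $N$ the plane $\sspan\{T\gamma''_b,T\gamma''_c\}$ is an arbitrary plane through $\gamma'_p$. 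In every case the offending determinant is not identically zero on the fibre, so each spurious dependency cuts out a proper closed subset, as you need. Two smaller points to tighten: define $W$ only over the locus $\gamma'_p\neq0$, $\gamma''_q\neq0$ (or take the closure of that part), since over degenerate points of $V_{M'}\times V_N$ the fibres jump and the full incidence variety need not be irreducible; and the final identity $\overline{\Phi(W)}=V_M$ needs no dimension count, since $\Gamma_M\subseteq\Phi(W)$ gives one inclusion and the genericity claim gives the other.
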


\begin{proof}
By Proposition~\ref{thm: cactus-like config is nilpotent}, $M$ is nilpotent, hence the result follows from \textup{\cite[Theorem~4.12]{liwski2024pavingmatroidsdefiningequations}}.
\end{proof}

\subsection{Matroid ideal of cactus configurations}

In this subsection, we present a complete set of defining equations for the matroid varieties associated with cactus configurations, or equivalently, a finite generating set for their matroid ideals up to radical. To achieve this, we first establish a series of lemmas.

\begin{definition}
Let $M$ be a matroid on $[d]$, and consider a subset $X \subset [d]$. We say that $X$ contains a cycle if there exist distinct points $x_1, \ldots, x_k \in X$ and distinct lines $l_1, \ldots, l_k$ of $M$ such that:
\begin{itemize}
    \item $\{x_i, x_{i+1}\} \subset l_i$ for every $i \in [k]$, where $x_{k+1} = x_1$.
\end{itemize}
\end{definition}

\begin{lemma}\label{Generalization of Lemma 4.21 i}
Let $M$ be a simple rank-three matroid on $[d]$ and let $X \subset [d]$ be a subset of points that does not contain a cycle. Then, there exists a point $x \in X$ such that 
\[|\{l \in \mathcal{L}_x: l \cap (X \backslash \{x\}) \neq \emptyset\}| \leq 1.\]
\end{lemma}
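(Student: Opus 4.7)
The plan is to encode the incidence structure between $X$ and the lines of $M$ meeting $X$ in at least two points as a bipartite graph, translate the cycle-free hypothesis into the statement that this graph is a forest, and then extract the desired $x$ as a leaf.

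Specifically, I would introduce the bipartite graph $B$ with vertex set $X\sqcup \mathcal{L}'$, where
\[\mathcal{L}'=\{l\in \mathcal{L}_{M}:\size{l\cap X}\geq 2\},\]
and with an edge joining $x\in X$ to $l\in \mathcal{L}'$ whenever $x\in l$. By construction every line-vertex $l\in \mathcal{L}'$ has degree $\size{l\cap X}\geq 2$ in $B$, while the degree of a point-vertex $x\in X$ equals exactly
\[\size{\{l\in \mathcal{L}_{x}:l\cap (X\setminus\{x\})\neq \emptyset\}},\]
which is the quantity the lemma asks to bound.

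The first step is to prove that $B$ is a forest. Any cycle in $B$ has even length $2k$ with $k\geq 2$ and alternates between distinct points $x_{1},\ldots,x_{k}\in X$ and distinct lines $l_{1},\ldots,l_{k}\in \mathcal{L}'$ with $\{x_{i},x_{i+1}\}\subset l_{i}$ for all $i\in [k]$ (indices taken mod $k$). For $k=2$ the two distinct lines $l_{1},l_{2}$ would both contain $\{x_{1},x_{2}\}$, contradicting the elementary matroid fact that in a simple rank-three matroid two distinct lines share at most one point. For $k\geq 3$, the tuples $(x_{i},l_{i})_{i\in [k]}$ form exactly a cycle contained in $X$, contradicting the hypothesis. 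Hence $B$ is a forest.

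To conclude, assume $X\neq \emptyset$ (otherwise the statement is vacuous). A finite nonempty forest has a vertex of degree at most one; since every $l\in \mathcal{L}'$ has degree at least two, such a vertex must lie in $X$ and thus serves as the required point $x$. The main subtlety is setting up the correct bipartite graph so that its cycles faithfully capture the cycles in $X$ (apart from the $k=2$ case, which is ruled out by the simple rank-three hypothesis); once this correspondence is in place the result reduces to the standard forest-has-a-leaf observation.
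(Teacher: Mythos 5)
Your proof is correct. The paper argues the contrapositive by a direct walk: assuming every $x\in X$ lies on at least two lines meeting $X\setminus\{x\}$, it greedily builds sequences $x_1,x_2,\ldots$ and $l_1,l_2,\ldots$ with $\{x_k,x_{k+1}\}\subset l_k$ and $l_k\neq l_{k+1}$, and invokes finiteness of $X$ to force a repetition and hence a cycle. Your bipartite incidence graph $B$ proves the same thing by reducing to the standard fact that a finite nonempty forest has a vertex of degree at most one; since every line-vertex of $B$ has degree at least two by the choice of $\mathcal{L}'$, that vertex must be the desired point of $X$. The substance is the same, but your packaging has two minor advantages: it makes explicit the correspondence between graph cycles of $B$ and the paper's matroid cycles (your $k=2$ case is disposed of via the fact that two distinct lines of a simple rank-three matroid meet in at most one point; note that the paper's definition of cycle already admits $k=2$, so the acyclicity hypothesis alone would also suffice there), and it sidesteps a small imprecision in the paper's argument, where one must still extract from the first repeated point of the walk a genuine cycle with pairwise distinct points and pairwise distinct lines, whereas the paper only arranges that consecutive lines differ. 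One trivial caveat you share with the paper: for $X=\emptyset$ the existential conclusion is false rather than vacuous, so both arguments implicitly assume $X\neq\emptyset$, which is how the lemma is used.
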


\begin{proof}
Suppose the contrary. Choose an arbitrary point $x_{1} \in X$. By assumption, there exists a line $l_{1} \in \mathcal{L}_{x_{1}}$ containing another point $x_{2} \in X$ with $x_{2} \neq x_{1}$. Applying the hypothesis again, we find a distinct line $l_{2} \neq l_{1}$ that contains $x_{2}$ and another point $x_{3} \neq x_{2}$. Repeating this process, we construct sequences of points $x_{1}, x_{2}, \ldots$ and lines $l_{1}, l_{2}, \ldots$ such that $x_{k} \neq x_{k+1}$, $l_{k} \neq l_{k+1}$, and $\{x_k, x_{k+1}\} \subset l_k$. Since $X$ is finite, some point must eventually repeat, forming a cycle. This contradicts the assumption that $X$ does not contain a cycle, completing the proof.
\end{proof}

Recall from Definition~\ref{nil sol} that $Q_{M}$ denotes the set of points in $M$ with degree at least three.

\begin{lemma}\label{Generalization of Lemma 4.21 ii}
Let $M$ be a cactus configuration, and let $\gamma \in V_{\mathcal{C}(M)}$ with $\gamma_p \neq 0$ for all $p \in Q_M$. Then $\gamma$ can be perturbed to obtain a configuration $\tau \in \Gamma_M$.
\end{lemma}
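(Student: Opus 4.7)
The plan is to proceed by induction on the number $n$ of gluings in the construction of the cactus configuration $M$ (Definition~\ref{cactus}). We may assume $M$ is connected, since the argument extends componentwise. The induction is enabled by the observation that if $M = M' \amalg_{p,q} N$ with $N$ a line or cycle, then $Q_{M'} \subseteq Q_M$: any point of degree at least three in $M'$ retains this property in $M$. Thus the hypothesis $\gamma_s \neq 0$ for $s \in Q_M$ restricts to the corresponding hypothesis on $Q_{M'}$, so the inductive hypothesis applies to $\gamma|_{M'}$.

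For the base case $n = 0$, the matroid $M$ is a single line or a single cycle, and $Q_M = \emptyset$, so the hypothesis imposes no point-specific constraints. For a line, we perturb $\gamma_N$ to a nearby two-dimensional subspace $\tau_N$ and place distinct nonzero perturbations of the $\gamma_i$ on it. For a cycle with lines $l_1,\ldots,l_k$ and cycle points $q_1,\ldots,q_k$, we perturb each two-dimensional subspace $\gamma_{l_i}$ to a nearby $\tau_{l_i}$ so that consecutive pairs intersect at nonzero points close to the prescribed $\gamma_{q_i}$ and all lines are pairwise distinct; degree-one points are then placed on their respective lines near their original positions.

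For the inductive step, write $M = M' \amalg_{p,q} N$ with $N$ a line or cycle, let $P$ denote the gluing point, and apply the inductive hypothesis to $\gamma|_{M'}$ to obtain $\tau' \in \Gamma_{M'}$ arbitrarily close to $\gamma|_{M'}$. Since $\tau'$ realizes a simple matroid, $\tau'_P \neq 0$. Set $\tau_s = \tau'_s$ for all $s \in M'$, and extend to $N$ exactly as in the base case, now subject to the additional constraint that the one or two lines of $N$ incident to $P$ must pass through $\tau'_P$. The free-gluing construction ensures that no new circuits of $M$ arise beyond those of $M'$ and $N$, so a realization of $M'$ combined with a realization of $N$ sharing the common value $\tau'_P$ at $P$ yields a realization of $M$.

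The main technical obstacle lies in the cycle subcase: one must simultaneously perturb $k$ lines near prescribed ones, subject to passing through $\tau'_P$ for the boundary line(s) and to having consecutive pairs intersect at nonzero points close to the prescribed cycle-vertex positions $\gamma_{q_i}$. The delicate situation occurs when two consecutive $\gamma_{l_i}$ and $\gamma_{l_{i+1}}$ coincide as two-dimensional subspaces (or when some $\gamma_{q_i}$ vanishes), since a naive perturbation need not place their intersection near $\gamma_{q_{i+1}}$. This is resolved by choosing the perturbations within the one-parameter family of two-dimensional subspaces through (a small perturbation of) $\gamma_{q_{i+1}}$, which forces the intersection to land at the desired vertex; iterating this construction around the cycle, with the boundary line(s) constrained to pass through $\tau'_P$, produces the desired $\tau|_N$, and combining with $\tau'$ yields $\tau \in \Gamma_M$ arbitrarily close to $\gamma$.
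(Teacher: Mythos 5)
Your induction is the same as the paper's, but there is a genuine gap at the gluing point. The hypothesis only guarantees $\gamma_s \neq 0$ for $s \in Q_M$, and the gluing point $P$ need not lie in $Q_M$ (it may have degree two in $M$, e.g.\ degree one in each of $M'$ and $N$), so $\gamma_P = 0$ is allowed. In that case your argument breaks: the inductive hypothesis hands you \emph{some} $\tau' \in \Gamma_{M'}$ near $\gamma|_{M'}$, and while $\tau'_P \neq 0$ because $M'$ is simple, $\tau'_P$ is then a small perturbation of the zero vector, so its \emph{direction} --- its image in $\mathbb{P}^2$ --- is completely uncontrolled. If $l$ is a line of $N$ through $P$ with $\rk(\gamma_l)=2$, your extension step needs a two-dimensional subspace containing $\tau'_P$ and close to $\gamma_l$; such a subspace exists only if $\tau'_P$ is projectively close to $\gamma_l$, which need not hold, and then the points of $l$ cannot be placed near their original positions. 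The paper avoids this by first redefining $\gamma_P$, \emph{before} invoking the induction, as a small nonzero vector lying in the intersection of the at most two rank-two subspaces $\gamma_l$, $l \in \mathcal{L}_P$; this keeps the configuration in $V_{\mathcal{C}(M)}$, is an arbitrarily small perturbation, and pins down the projective position of $\tau'_P$. You need this preliminary step or an equivalent.

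A second, smaller issue: your closing claim that a realization of $M'$ combined with a realization of $N$ sharing the value $\tau'_P$ at $P$ automatically yields a realization of $M$ is false as stated. Freeness of the gluing is a statement about the matroid $M$; the concatenated vector configuration can still acquire accidental dependencies not prescribed by $M$ (a point of $N$ landing on a line of $M'$, say), which would place it in $V_{\mathcal{C}(M)} \setminus \Gamma_M$. The paper restores general position by rotating the $N$-part infinitesimally about $\tau'_P$; you should add such a genericity argument. Finally, your base case is correct in spirit but re-proves by hand what Theorem~\ref{nil coincide}(i) gives directly for lines and cycles, and the hand construction would itself need care in the degenerate situations (rank-deficient or coinciding lines) that the cited theorem already absorbs.
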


\begin{proof}
We may assume that $M$ is connected, since the argument can then be applied separately to each connected component. By Definition~\ref{cactus}, $M$ is constructed through a sequence of gluings of lines or cycles. We prove the claim by induction on the number $m$ of such gluings.

\medskip
{\bf Base case:} If $m = 0$, then $M$ is either a line or a cycle. By Theorem~\ref{nil coincide} (i), it follows that $\VCM = V_M$, so we can perturb $\gamma$ to obtain $\tau \in \Gamma_M$.

\medskip
{\bf Induction step}: Assume that the statement holds for any cactus configuration constructed from at most $m-1$ gluings, and consider a cactus configuration $M$ obtained through $m$ gluings. At the $m^{\text{th}}$-step, $M$ takes the form
$M=M\rq \amalg_{p,q} N$,
where $M\rq$ is constructed from $m-1$ gluings, and $N$ is either a line or a cycle. Denote the ground sets of $M\rq$ and $N$ by $[d_{1}]$ and $[d_{2}]$, respectively. The ground set of $M\rq \amalg_{p,q} N$ is then 
$([d_{1}]\backslash p)\amalg ([d_{2}]\backslash q)\cup \{P\}$,
where $P$ is the point that identifies both $p$ and $q$.

\medskip
We first show that we may assume $\gamma_{P}\neq 0$. 
\begin{itemize}
\item If $P\in Q_{M}$, then $\gamma_{P}\neq 0$ by assumption. 
\item If $|\mathcal{L}_P| \leq 2$ and $\gamma_{P}=0$, consider the set $R =\{\gamma_l:l \in \mathcal{L}_i \textup{ and } \rk(\gamma_l)=2\}$. Since $i$ has degree at most two, we have $|R| \leq 2.$ If $|R| = 0$, then perturb $\gamma$ arbitrarily away from the origin. If $|R|=1$, then perturb $\gamma$ on the unique line in $R$, away from the origin. If $|R|=2$, then perturb $\gamma$ on the intersection of the two distinct lines in $R$. In each case, the resulting configuration lies in $\Gamma_M$, as desired.
\end{itemize}

{\bf Case~1.} Suppose that $N$ is a line. By the induction hypothesis, we can perturb the vectors $\{\gamma_r:r\in [d_{1}]\}$ to obtain a collection $\{\tau_r:r\in [d_{1}]\}$ in $\Gamma_{M\rq}$. 
Since $\gamma_{P}\neq 0$, we can further extend $\tau$ to the points in $[d_{2}] \backslash \{q\}$ by applying a perturbation to the vectors $\{\gamma_{r}:r\in [d_{2}]\backslash q\}$, ensuring that these vectors remain on the same line as $\tau_P$ and on no other line. This yields a collection of vectors $\tau \in \Gamma_{M}$, as desired.

\medskip
{\bf Case~2.} Suppose that $N$ is a cycle. By the induction hypothesis, we can perturb the vectors $\{\gamma_r : r \in [d_{1}]\}$ to obtain a collection $\{\tau_r : r \in [d_{1}]\}$ in $\Gamma_{M'}$.  
Similarly, as in the first case, we can perturb the vectors $\{\gamma_r : r \in [d_{2}]\}$ to obtain a collection $\{\beta_r : r \in [d_{2}]\}$ in $\Gamma_{N}$.  
Furthermore, since $\gamma_p \neq 0$, by applying a small rotation from the origin to the vectors in $\beta$, we can ensure that $\beta_q = \tau_p$.  
Next, we rotate $\beta$ infinitesimally around $\tau_p$ and observe that each $\beta_r \neq 0$ since $\beta \in \Gamma_N$. This guarantees that the vectors $\beta_r$ do not lie on any lines of the configuration other than those associated with $N$, for all $r \in [d_{2}]$.  
This yields a collection of vectors in $\Gamma_M$, as desired.
\end{proof}

\begin{lemma}\label{Generalization of Lemma 4.21 iii}
Let $M$ be a cactus configuration, such that the points of $Q_M$ do not contain a cycle. If $\gamma \in V_{\mathcal{C}(M)} \cap V(G_{M})$, then there exists $\tau \in V_{\mathcal{C}(M)}$ such that:
\begin{itemize}
\item For every point $p \in Q_{M}$, we have $\tau_{p} \neq 0$.
\end{itemize}
In particular, $\tau$ can be chosen as a perturbation of $\gamma$.
\end{lemma}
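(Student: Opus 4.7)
The plan is to argue by induction on $n := |Z_\gamma|$ where $Z_\gamma := \{p \in Q_M : \gamma_p = 0\}$, at each step lighting up one additional zero point of $Q_M$ by a small perturbation. The base case $n=0$ is immediate ($\tau = \gamma$). To keep the induction self-contained, without having to re-establish $V(G_M)$-membership for the perturbed configuration, I strengthen the hypothesis to the following \emph{concurrency condition}: for every $y \in Q_M$ with $\gamma_y = 0$, the subspaces $\{\gamma_l : l \in \mathcal{L}_y\}$ share a nonzero common subspace of $\CC^3$. Proposition~\ref{inc G}, together with its parenthetical remark, ensures that this condition is satisfied by every $\gamma \in \VCM \cap V(G_M)$, so the stronger statement implies the lemma.

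For the inductive step, the cycle-freeness of $Q_M$ implies that $Z_\gamma$ is cycle-free, so Lemma~\ref{Generalization of Lemma 4.21 i} furnishes a point $x \in Z_\gamma$ such that at most one line in $\mathcal{L}_x$ meets $Z_\gamma \setminus \{x\}$. By the concurrency condition, $\bigcap_{l \in \mathcal{L}_x} \gamma_l$ contains a $1$-dimensional subspace $L_x$; I choose $\tau_x$ to be a small nonzero vector in $L_x$ and set $\tau_i = \gamma_i$ for $i \neq x$. Since $\tau_x \in L_x \subseteq \gamma_l$ for every $l \in \mathcal{L}_x$, a short computation gives $\tau_l = \mathrm{span}(\gamma_l, \tau_x) = \gamma_l$ for every line $l \in \mathcal{L}_x$, and trivially $\tau_l = \gamma_l$ for $l \not\ni x$. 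Consequently $\dim \tau_l = \dim \gamma_l \leq 2$ for every line $l$, so $\tau \in \VCM$. The concurrency condition is also inherited: at any remaining $y \in Z_\gamma \setminus \{x\}$, one has $\bigcap_{l \in \mathcal{L}_y} \tau_l = \bigcap_{l \in \mathcal{L}_y} \gamma_l$, which is nonzero by hypothesis. Applying the induction to $\tau$ (with $|Z_\tau| = n-1$) completes the step; since each iteration modifies only a single coordinate by an arbitrarily small amount, the final $\tau$ is a perturbation of $\gamma$ in the required sense.

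\textbf{Main obstacle.} The delicate point is to genuinely extract the concurrency condition, that is, the geometric statement that $\bigcap_{l \in \mathcal{L}_y} \gamma_l$ is nonzero, from the algebraic conclusion of Proposition~\ref{inc G}. When some $\gamma_l$ has dimension strictly less than two, the Grassmann--Cayley wedge can vanish for trivial reasons without implying that the subspaces share a common line in $\mathbb{P}^2$. I expect to handle this through a case analysis on $\dim \gamma_l$, using the cactus structure of $M$ and the leaf selection of Lemma~\ref{Generalization of Lemma 4.21 i} to exclude the truly degenerate configurations in which $\gamma_l = \{0\}$ for a line $l$ incident to a relevant zero point. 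The cycle-freeness assumption on $Q_M$ is what lets us invoke Lemma~\ref{Generalization of Lemma 4.21 i} and peel off one zero point at a time, making the inductive reduction possible.
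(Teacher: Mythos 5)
Your strengthened invariant---that $\bigcap_{l \in \mathcal{L}_y} \gamma_l \neq 0$ for every zero point $y \in Q_M$---is false for elements of $V_{\mathcal{C}(M)} \cap V(G_M)$, and the degenerate configurations you hope to ``exclude'' via the cactus structure genuinely occur, so the obstacle you flag at the end is a real gap, not a technicality. Concretely, let $M$ be three lines $l_1,l_2,l_3$ freely glued at a common point $y$ (a cactus configuration with $Q_M=\{y\}$, trivially acyclic), and take $\gamma_y=0$, the points of $l_1\setminus\{y\}$ spanning a plane $W_1$, the points of $l_2\setminus\{y\}$ spanning a plane $W_2\neq W_1$, and all points of $l_3\setminus\{y\}$ coinciding at a point $P\notin W_1\cap W_2$. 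Every Grassmann--Cayley polynomial vanishes at $\gamma$ (each contains a bracket involving either $\gamma_y=0$ or two copies of $P$), so $\gamma\in V_{\mathcal{C}(M)}\cap V(G_M)$, yet $\gamma_{l_1}\cap\gamma_{l_2}\cap\gamma_{l_3}=0$. The lemma still holds here---one places $\tau_y$ in $W_1\cap W_2$, since the degenerate line $l_3$ imposes no constraint on $\tau_y$---which shows the invariant must be weakened to concurrency of only those $l\in\mathcal{L}_y$ with $\dim\gamma_l=2$.

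With the weakened invariant, however, its preservation under your inductive step is no longer automatic, and this is the second, deeper gap. Perturbing $\gamma_x$ to a nonzero $\tau_x$ can raise the dimension of a degenerate line $l_0\ni x$ from at most $1$ to $2$; if $l_0$ also contains another zero point $y\in Q_M$, the new plane $\tau_{l_0}$ enters the intersection condition at $y$. Since $\tau_x$ is pinned (up to scalar) to the one-dimensional intersection of the nondegenerate lines at $x$, the plane $\tau_{l_0}$ is forced, and nothing in Proposition~\ref{inc G} guarantees that it contains the common point of the nondegenerate lines at $y$. It is precisely the \emph{iterated} substitutions in the definition of $G_M$ (Definition~\ref{gm})---not merely the triple-concurrency consequence recorded in Proposition~\ref{inc G}---that encode this compatibility between the forced positions at $x$ and at $y$ along a shared degenerate line, and this is also the only place where the leaf point supplied by Lemma~\ref{Generalization of Lemma 4.21 i} and the acyclicity of $Q_M$ can do real work; in your write-up the leaf choice is invoked but never used, which is a symptom of the inductive step not closing. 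For what it is worth, the paper itself gives no self-contained argument either: it simply asserts that the claim follows by the same argument as \textup{\cite[Lemma~4.23]{liwski2024pavingmatroidsdefiningequations}}, so any complete proof must confront exactly the degeneracy bookkeeping sketched above.
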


\begin{proof}
The proof follows by applying the same argument as in \textup{\cite[Lemma~4.23]{liwski2024pavingmatroidsdefiningequations}}.
\end{proof}

We now establish the main result of this section.

\begin{theorem} \label{thm: main theorem cactus matroid ideal}
   Let $M$ be a cactus configuration, such that the points of $Q_M$ do not contain a cycle. Then $I_M = \sqrt{I_{\mathcal{C}(M)} +G_M}$. 
\end{theorem}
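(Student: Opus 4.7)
The plan is to apply Strategy~\ref{strategy}, which reduces the ideal equality to the varietal equality $V_M = V_{\mathcal{C}(M)}\cap V(G_M)$. The inclusion $V_M\subseteq V_{\mathcal{C}(M)}\cap V(G_M)$ is immediate: the inclusion $I_{\mathcal{C}(M)}\subseteq I_M$ is from Remark~\ref{generating set ICM}, while $G_M\subseteq I_M$ is Proposition~\ref{inc G}. The whole content of the theorem is the reverse inclusion $V_{\mathcal{C}(M)}\cap V(G_M)\subseteq V_M$.

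For this, I would fix an arbitrary $\gamma\in V_{\mathcal{C}(M)}\cap V(G_M)$ and aim to produce, for every $\epsilon>0$, a perturbation of $\gamma$ lying in $\Gamma_M$. Since $V_M=\overline{\Gamma_M}^{\textup{Euclidean}}=\overline{\Gamma_M}^{\textup{Zariski}}$, this would place $\gamma$ in $V_M$ and conclude the proof. The construction of the perturbation should proceed in two stages, each of which is already encapsulated in the lemmas established just before the statement.

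The first stage is to use the hypothesis that $Q_M$ contains no cycle. Invoking Lemma~\ref{Generalization of Lemma 4.21 iii}, I obtain a perturbation $\tau\in V_{\mathcal{C}(M)}$ of $\gamma$ such that $\tau_p\neq 0$ for every $p\in Q_M$. This is the step that genuinely uses the vanishing on $V(G_M)$: the Grassmann–Cayley relations allow one to realize all high-degree intersection points as nonzero even when they happen to vanish in $\gamma$, and the acyclicity of $Q_M$ provides the inductive order in which to resolve these points (via Lemma~\ref{Generalization of Lemma 4.21 i} guaranteeing a point of $Q_M$ incident to at most one line meeting $Q_M$ nontrivially).

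The second stage is to perturb $\tau$ into an honest realization. Since $\tau\in V_{\mathcal{C}(M)}$ satisfies $\tau_p\neq 0$ for every $p\in Q_M$, Lemma~\ref{Generalization of Lemma 4.21 ii} applies directly (its hypotheses are precisely this nonvanishing condition together with $M$ being a cactus configuration). It yields a further perturbation $\tau'\in \Gamma_M$. Composing the two perturbations produces an element of $\Gamma_M$ arbitrarily close to $\gamma$ in the Euclidean topology, completing the proof.

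The main obstacle is really absorbed into Lemma~\ref{Generalization of Lemma 4.21 iii}, where the acyclicity of $Q_M$ is essential: without it, forcing every point of $Q_M$ to be nonzero via perturbations could fail because the Grassmann–Cayley relations in $G_M$ would not determine enough about the mutual positions of the high-degree points to avoid conflicts. Once this hurdle is cleared, the inductive gluing structure of cactus configurations, together with the base case $V_{\mathcal{C}(M)}=V_M$ for lines and cycles furnished by Theorem~\ref{nil coincide}(i) (applicable since cycles are nilpotent and have no point of degree $>2$), makes the final perturbation routine.
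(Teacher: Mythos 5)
Your proposal is correct and follows essentially the same route as the paper: the paper likewise reduces to the varietal inclusion $V_{\mathcal{C}(M)}\cap V(G_M)\subseteq V_M$, applies Lemma~\ref{Generalization of Lemma 4.21 iii} to perturb $\gamma$ to $\tau\in V_{\mathcal{C}(M)}$ with $\tau_p\neq 0$ for all $p\in Q_M$, and then applies Lemma~\ref{Generalization of Lemma 4.21 ii} to obtain $\tau'\in\Gamma_M$. Your additional commentary on where the acyclicity hypothesis and the $V(G_M)$ condition enter is accurate and consistent with how those lemmas are used.
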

\begin{proof}
Denote by $I$ the ideal on the right-hand side, which is contained in $I_M$ by Proposition \ref{inc G}. To show the reverse inclusion, it suffices to show $V(I) \subseteq V_M$. Consider a collection of vectors $\gamma\in \VCM \cap V(G_M)$. By Lemma~\ref{Generalization of Lemma 4.21 iii}, we can perturb the vectors of $\gamma$ to a collection of vectors $\tau \in \VCM$ such that $\tau_p \neq 0$ for all $p \in Q_M$. Applying Lemma~\ref{Generalization of Lemma 4.21 ii}, we obtain a collection of vectors $\tau'\in \Gamma_{M}$ which is a perturbation of $\tau$. This proves that $\gamma \in V_{M}$, as desired.
\end{proof}

In the following example, we show that the acyclicity assumption on the points of $Q_M$ in Theorem~\ref{thm: main theorem cactus matroid ideal} is essential and cannot be omitted.

\begin{example}
Consider the matroid depicted in Figure~\ref{fig:preliminaries gemengd} (Right), which we denote by $M$.
    Let 
$$\gamma = \begin{pmatrix}
    1 & 0 & 0 & 1 & 1 & 1 & 1 & 1 & 1 & 1 & 1 & 0 & 0 & 0  \\
    0 & 1 & 0 & 1 & 2 & 4 & 5 & 6 & 7 & 9 & 8 & 0 & 0 & 0 \\
    0 & 0 & 1 & 1 & 3 & 8 & 7 & 10 & 12 & 21 & 17 & 0 & 0 & 0
        \end{pmatrix},$$
where the columns correspond to the points $\{4,5,6, 7,8,9,10,11,12,13,14,1,2,3\}$, respectively. Note that $\gamma \in \VCM \cap V(G_M)$. We will show that $\gamma\not \in V_{M}$.

Assume, for contradiction, that $\gamma \in V_M = \overline{\Gamma_M}$. Since the Zariski and Euclidean closures of $\Gamma_M$ coincide, it follows that for every $\epsilon > 0$, we can choose an element $\gamma_\epsilon \in \Gamma_M$ such that $\lvert \gamma - \gamma_\epsilon \rvert < \epsilon$. After a perturbation, we may assume that $\gamma_\epsilon$ takes the form
{\small
\[
\gamma_{\epsilon} = \begin{pmatrix}
    1 & 0 & 0 & 1 & 1 & 1 & 1 & 1 & 1 & 1 & 1 & x_1 & x_2 & x_3  \\
    0 & 1 & 0 & 1 & 2 + \epsilon(1) & 4 + \epsilon(3) & 5 + \epsilon(5) & 6 + \epsilon(7) & 7 + \epsilon(9) & 9 + \epsilon(11) & 8 + \epsilon(13) & y_1 & y_2 & y_3 \\
    0 & 0 & 1 & 1 & 3 + \epsilon(2) & 8 + \epsilon(4) & 7 + \epsilon(6) & 10 + \epsilon(8) & 12 + \epsilon(10) & 21 + \epsilon(12) & 17 + \epsilon(14) & z_1 & z_2 & z_3
\end{pmatrix},
\]
}
where the columns again correspond to $\{4,5,6, 7,8,9,10,11,12,13,14,1,2,3\}$ and $\epsilon(i)\in \CC$ satisfies $\lvert \epsilon(i)\rvert <\epsilon$ for all $i\in [14]$.
Note that $\{1,9,10\}$ and $\{1,7,8\}$ are lines in $M$, hence the point ${\gamma_{\epsilon}}_{1}$ lies in the intersection of the lines spanned by $\{{\gamma_{\epsilon}}_{9},{\gamma_{\epsilon}}_{10}\}$ and $\{{\gamma_{\epsilon}}_{7},{\gamma_{\epsilon}}_{8}\}$. Therefore, 

\[{\gamma_{\epsilon}}_{1}={\gamma_{\epsilon}}_{9}{\gamma_{\epsilon}}_{10}\wedge {\gamma_{\epsilon}}_{7}{\gamma_{\epsilon}}_{8}.\]
Taking the limit $\epsilon \to 0$, we obtain
$\lim_{\epsilon \to 0} {\gamma_{\epsilon}}_{1}=\gamma_{9}\gamma_{10}\wedge \gamma_7 \gamma_8=(1,13/3,23/3)^{\top}$.

Similarly, $\{3,1,6\}$ and $\{3,11,12\}$ are lines of $M$, hence ${\gamma_{\epsilon}}_{3}$ lies in the intersection of the lines spanned by $\{{\gamma_{\epsilon}}_{1},{\gamma_{\epsilon}}_{6}\}$ and $\{{\gamma_{\epsilon}}_{11},{\gamma_{\epsilon}}_{12}\}$. Thus,
${\gamma_{\epsilon}}_{3}={\gamma_{\epsilon}}_{1}{\gamma_{\epsilon}}_{6}\wedge {\gamma_{\epsilon}}_{11}{\gamma_{\epsilon}}_{12}$.
Taking the limit $\epsilon \to 0$, we obtain
\[\lim_{\epsilon \to 0} {\gamma_{\epsilon}}_{3}=\lim_{\epsilon \to 0}{\gamma_{\epsilon}}_{1}{\gamma_{\epsilon}}_{6}\wedge {\gamma_{\epsilon}}_{11}{\gamma_{\epsilon}}_{12}=(1,13/3,23/3)^{\top}\gamma_{6}\wedge \gamma_{11} \gamma_{12}=(1,13/3,20/3)^{\top}. \]

Similarly, $\{2,3,5\}$ and $\{2,13,14\}$ are lines of $M$. Thus, the triples of points $\{{\gamma_{\epsilon}}_{2},{\gamma_{\epsilon}}_{3},{\gamma_{\epsilon}}_{5}\}$ and $\{{\gamma_{\epsilon}}_{2},{\gamma_{\epsilon}}_{13},{\gamma_{\epsilon}}_{14}\}$ are collinear. Consequently, 
${\gamma_{\epsilon}}_{2}={\gamma_{\epsilon}}_{3}{\gamma_{\epsilon}}_{5}\wedge {\gamma_{\epsilon}}_{13}{\gamma_{\epsilon}}_{14}$.
Taking the limit $\epsilon \to 0$, we obtain
\[\lim_{\epsilon \to 0} {\gamma_{\epsilon}}_{2}=\lim_{\epsilon \to 0}{\gamma_{\epsilon}}_{3}{\gamma_{\epsilon}}_{5}\wedge {\gamma_{\epsilon}}_{13}{\gamma_{\epsilon}}_{14}=(1,13/3,20/3)^{\top}\gamma_{5}\wedge \gamma_{13} \gamma_{14}=(1,65/12,80/12)^{\top}. \]

Now, since $\{1,2,4\}$ is a line of $M$, and $\gamma_{\epsilon}\in \Gamma_{M}$, it must hold that $\det({\gamma_{\epsilon}}_{1},{\gamma_{\epsilon}}_{2},{\gamma_{\epsilon}}_{4})=0$.
However, evaluating the limits give
\begin{equation*}
\begin{aligned}
\lim_{\epsilon \to 0}\det({\gamma_{\epsilon}}_{1},{\gamma_{\epsilon}}_{2},{\gamma_{\epsilon}}_{4})&=\det(\lim_{\epsilon \to 0}{\gamma_{\epsilon}}_{1},\lim_{\epsilon \to 0}{\gamma_{\epsilon}}_{2},\gamma_{4})\\
&=\det((1,13/3,23/3)^{\top},(1,65/12,80/12)^{\top},\gamma_{4})\\
&=-455\neq 0,
\end{aligned}
\end{equation*}
a contradiction. Thus $\gamma \notin V_{M}$, and so the acyclicity assumption in Theorem~\ref{thm: main theorem cactus matroid ideal} is~necessary.
\end{example}

\subsection{Bound on the number of irreducible components of $\VCM$}
We now bound the number of irreducible components of the  circuit variety of a cactus configuration.

\begin{theorem} \label{decomposition cactus}
    Let $M$ be a cactus configuration. Then $\VCM$ has at most $2^{\lvert Q_{M} \rvert}$ irreducible components, each arising from setting a subset of $Q_{M}$ to be loops.
\end{theorem}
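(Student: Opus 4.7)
The plan is to write $\VCM$ as an explicit finite union of irreducible varieties indexed by subsets of $Q_M$. For each $S \subseteq Q_M$, let $M_S$ denote the matroid on $[d]$ obtained from $M$ by declaring every point of $S$ to be a loop, so that $\Gamma_{M_S}$ consists of configurations of vectors in $\Gamma_{M \backslash S}$ extended by zero vectors on the coordinates indexed by $S$. Since deleting any set of points from a cactus configuration again yields a cactus configuration (which follows from the inductive construction in Definition~\ref{cactus}, as deletion is compatible with free gluing and converts lines and cycles into lines, cycles, or chains of lines), $M \backslash S$ is a cactus configuration, and so Theorem~\ref{cact irr} implies that $V_{M \backslash S}$ is irreducible; consequently, $V_{M_S}$ is irreducible.

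I then aim to establish the set-theoretic identity
\[
\VCM \;=\; \bigcup_{S \subseteq Q_M} V_{M_S},
\]
from which the theorem follows immediately, since a finite union of irreducible varieties has at most that many irreducible components. The inclusion $\supseteq$ is clear, as making the points of $S$ into loops only strengthens the circuit relations of $M$: every realization of $M_S$ automatically satisfies the dependencies of $M$ and thus lies in $\VCM$.

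For the nontrivial inclusion $\subseteq$, let $\gamma \in \VCM$ and define $S_\gamma = \{p \in Q_M : \gamma_p = 0\}$. The restriction $\gamma' = \gamma|_{[d] \backslash S_\gamma}$ lies in $V_{\mathcal{C}(M \backslash S_\gamma)}$, and crucially it satisfies $\gamma'_p \neq 0$ for every $p \in Q_{M \backslash S_\gamma}$. Indeed, deleting points can only decrease degrees, so $Q_{M \backslash S_\gamma} \subseteq Q_M \backslash S_\gamma$, and by definition of $S_\gamma$, every $p \in Q_M \backslash S_\gamma$ satisfies $\gamma_p \neq 0$. Applying Lemma~\ref{Generalization of Lemma 4.21 ii} to the cactus configuration $M \backslash S_\gamma$, I can perturb $\gamma'$ to obtain $\tau' \in \Gamma_{M \backslash S_\gamma}$. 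Extending $\tau'$ by zero vectors on the coordinates indexed by $S_\gamma$ produces a perturbation $\tau$ of $\gamma$ lying in $\Gamma_{M_{S_\gamma}} \subseteq V_{M_{S_\gamma}}$, and since $V_{M_{S_\gamma}}$ is closed we conclude $\gamma \in V_{M_{S_\gamma}}$.

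The main technical obstacle is the verification that $M \backslash S$ is again a cactus configuration, together with the degree-monotonicity observation $Q_{M \backslash S} \subseteq Q_M \backslash S$; both are routine but must be spelled out, and the first step can be done either by induction on the gluing construction of Definition~\ref{cactus} or via the graph characterization already established in the previous subsection. Note that the bound $2^{|Q_M|}$ is only an upper bound, since some $V_{M_S}$ may be strictly contained in some other $V_{M_{S'}}$, which is exactly the phenomenon captured by the word \emph{irredundant} in the statement of Theorem~\ref{thm:B}.
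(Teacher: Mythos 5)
Your proposal is correct and follows essentially the same route as the paper: the identical decomposition $\VCM=\bigcup_{S\subseteq Q_M}V_{M_S}$, with the reverse inclusion obtained by restricting $\gamma$ to the complement of its zero set inside $Q_M$ and invoking Lemma~\ref{Generalization of Lemma 4.21 ii}, and irreducibility of each piece from Theorem~\ref{cact irr}. If anything, you are slightly more careful than the paper, which asserts without comment that $M\backslash S$ is again a cactus configuration and that $\gamma_p\neq 0$ on $Q_{M\backslash S}$; your explicit use of $Q_{M\backslash S}\subseteq Q_M\backslash S$ tidies up that step.
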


\begin{proof}
Let $[d]$ be the ground set of $M$. For each subset $J\subset Q_{M}$, let $M(J)$ denote the matroid obtained from $M$ by setting the points of $J$ to be loops.
We claim that the following equality holds:
\begin{equation}\label{vcm}
V_{\mathcal{C}(M)}=\bigcup_{J\subset Q_{M}}V_{M(J)}.
\end{equation}
The inclusion $\supset$ is clear since the matroids $M(J)$ have more dependencies than $M$. To establish the reverse inclusion, we must verify that every $\gamma\in V_{\mathcal{C}(M)}$ lies in the variety on the right-hand side of~\eqref{vcm}. To verify this, let us fix an arbitrary element $\gamma\in V_{\mathcal{C}(M)}$.
Let $X$ be the set
$\{x \in Q_M : \gamma_x = 0\}$.
The submatroid $M\backslash X$ is also a cactus configuration. Moreover, since $\gamma_{p}\neq 0$ for all $p\in [d]\backslash X$, it follows that $\gamma_{p}\neq 0$ for all $p\in Q_{M\backslash X}$. Therefore, by Lemma~\ref{Generalization of Lemma 4.21 ii}, we know that $\restr{\gamma}{[d]\backslash X}\in V_{M\backslash X}$, which implies that $\gamma\in V_{M(X)}$. This proves the claim. 

By Theorem~\ref{cact irr}, each variety on the right-hand side of~\eqref{vcm} is irreducible. It follows that $V_{\mathcal{C}(M)}$ has at most $2^{\size{Q_{M}}}$ irreducible components.
\end{proof}

\section{
Pascal and Pappus configurations}\label{section pascal and papus}

In this section, we determine a finite generating set, up to radical, for the matroid ideals of the Pascal and Pappus configurations.

\subsection{A sufficient criterion for liftability}

The main result of this subsection is Proposition~\ref{prop: nilpotent add point liftable}, which establishes a sufficient condition for an affirmative answer to the following question:

\begin{question}\normalfont\label{question}
Let $M$ be a simple rank-three matroid on $[d]$ and suppose $\{\gamma_{1},\ldots,\gamma_{d-1}\}\subset \mathbb{P}^{2}$ is a collection of points lying on a common line $l\subset \mathbb{P}^{2}$ which is liftable from a point $q\in \mathbb{P}^{2}$ to a collection in $V_{\CCC(M\backslash d)}$. 
Does there exist a point $\gamma_{d}\in l$ such that the extended collection $\{\gamma_{1},\ldots,\gamma_{d}\}$ is liftable from $q$ to a collection in $V_{\CCC(M)}$?  
\end{question}

In the following sections, we will apply Proposition~\ref{prop: nilpotent add point liftable} to compute the matroid ideals associated with the Pascal and Pappus configurations.
In the context of the preceding question and throughout this subsection, a lifting always refers to a non-degenerate one, that is, a lifting in which the resulting vectors span the entire space. We begin by introducing a definition.

\begin{definition}\label{definition dim lift}
    Let $M$ be a simple rank-three matroid on $[d]$ and let $\gamma$ be a collection of vectors in $V_{\CCC(M)}$ and $q\in \CC^{3}$ a vector in general position with respect to $\gamma$. Following Definition~\ref{def liftable}, we denote by $\Lift_{M,q}(\gamma)\subset \CC^{d}$ the subspace composed of all vectors $(z_{1},\ldots,z_{d})$ such that the collection of vectors given by $\widetilde{\gamma}_{i}=\gamma_{i}+z_{i}q$ belongs to $V_{\CCC(M)}$. We also denote
    \[\dimm{q}{\gamma}(M)=\dim(\Lift_{M,q}(\gamma)),\]
the dimension of the space of liftings of $\gamma$. By~\cite[Lemma~4.31]{liwski2024pavingmatroidsdefiningequations}, this number equals
$\dim(\ker(\liftmat))$,
where $\liftmat$ is the matrix defined in Definition~\ref{matrix lift}. For any submatroid $M|S$, we denote by $\dimm{q}{\gamma}(M|S)$ the dimension
$\dimm{q}{\gamma}(M|S) := \dim\big(\Lift_{M|S,q}(\restr{\gamma}{S})\big)$.

\end{definition}

We now show that, under an additional natural assumption, the quantity $\dimm{q}{\gamma}(M)$ is independent of the choice of $q$ and $\gamma$ for nilpotent matroids. Before that we introduce some notation.

\begin{notation}
Let $M$ be a matroid on $[d]$ and let $w=(p_{1},\ldots,p_{d})$ be an ordering of its elements. We denote by $M_{w,i}$ the restriction $M\mid \{p_{1},\ldots,p_{i}\}$, and we define $w_{i}$ to be the degree of $p_{i}$ in $M_{w,i}$.
Note that if $M$ is nilpotent, then there exists an ordering $w$ for which
\begin{equation}\label{max}\text{max}\{w_{i}:i\in [d]\}\leq 1.\end{equation}
\end{notation}

\begin{theorem}\label{qgamma is constant}
Let $M$ be a nilpotent matroid on $[d]$
and let $w=(i_{1},\ldots,i_{d})$ be an ordering of its points satisfying condition~\eqref{max}. Then, for any $\gamma\in V_{\CCC(M)}$ with no coinciding points or loops, and for any $q\in \CC^{3}$ in general position with respect to $\gamma$, we have
 \begin{equation}\label{suma wi}\dimm{q}{\gamma}(M)=d-\sum_{i=1}^{d}w_{i}.\end{equation}
\end{theorem}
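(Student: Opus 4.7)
The plan is to prove Equation~\eqref{suma wi} by induction on $i$, establishing that
\[\dimm{q}{\gamma}(M_{w,i}) = i - \sum_{j=1}^{i} w_j \quad \text{for all } i \in [d].\]
The base case $i = 1$ is immediate: $M_{w,1}$ consists of the single point $p_1$ and has no circuits, so the lifting matrix has no rows, its kernel is all of $\mathbb{C}$, and the formula gives $1 = 1 - w_1$ (note that $w_1 = 0$). For the inductive step, I would consider the natural linear map
\[\pi_i : \Lift_{M_{w,i},q}(\gamma|_{[i]}) \longrightarrow \Lift_{M_{w,i-1},q}(\gamma|_{[i-1]}), \qquad (z_1,\ldots,z_i) \longmapsto (z_1,\ldots,z_{i-1}),\]
forgetting the last coordinate, and show it is surjective with fibers of dimension $1 - w_i$. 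This immediately yields the induction step.

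The case $w_i = 0$ is straightforward: since $p_i$ lies on no line of $M_{w,i}$, no new circuit of size three involves $p_i$, so there is no constraint on $z_i$ and every fiber of $\pi_i$ equals $\CC$. For $w_i = 1$, the point $p_i$ lies on a unique line $l$ of $M_{w,i}$, and the new circuits of size three are precisely the triples $\{p_i, p_{j_a}, p_{j_b}\}$ with $p_{j_a}, p_{j_b} \in l \cap \{p_1,\ldots,p_{i-1}\}$. Given a lifting of $M_{w,i-1}$, all these constraints are equivalent to the single condition $\widetilde{\gamma}_i \in \widetilde{\pi}_l$, where $\widetilde{\pi}_l \subset \CC^3$ is the subspace spanned by the lifted points on $l$ that come from $M_{w,i-1}$. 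If $\widetilde{\pi}_l$ is two-dimensional and $q \notin \widetilde{\pi}_l$, then $z_i$ is uniquely determined by $\gamma_i + z_i q \in \widetilde{\pi}_l$, giving a zero-dimensional fiber, as required.

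The key technical point, and the main obstacle, is to establish the two claims on $\widetilde{\pi}_l$ for \emph{every} choice of lifting of $M_{w,i-1}$ (not merely generic ones). Pick any two points $p_{j_a}, p_{j_b} \in l \cap \{p_1,\ldots,p_{i-1}\}$ (which exist because $l$ is a line and therefore contains at least three points in $M_{w,i}$). By the assumption that $\gamma$ has no coinciding points or loops, $\gamma_{j_a}$ and $\gamma_{j_b}$ are linearly independent. A direct calculation using the identity $\widetilde{\gamma}_{j_c} = \gamma_{j_c} + z_{j_c} q$ together with the assumption that $q$ is in general position with respect to $\gamma$ (in particular $q \notin \sspan(\gamma_{j_a}, \gamma_{j_b})$) shows that $\widetilde{\gamma}_{j_a}, \widetilde{\gamma}_{j_b}$ remain independent, so $\widetilde{\pi}_l$ has dimension exactly two. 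A similar manipulation shows that $q \in \sspan(\widetilde{\gamma}_{j_a}, \widetilde{\gamma}_{j_b})$ would force $q \in \sspan(\gamma_{j_a}, \gamma_{j_b})$, contradicting the general position hypothesis; hence $q \notin \widetilde{\pi}_l$.

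Combining the two cases, $\dim \Lift_{M_{w,i},q}(\gamma|_{[i]}) = \dim \Lift_{M_{w,i-1},q}(\gamma|_{[i-1]}) + (1 - w_i)$, and iterating from the base case at $i=1$ gives Equation~\eqref{suma wi} at $i = d$. The translation from these kernel dimensions back to $\dimm{q}{\gamma}(M)$ is provided by \cite[Lemma~4.31]{liwski2024pavingmatroidsdefiningequations}, which is quoted in Definition~\ref{definition dim lift}.
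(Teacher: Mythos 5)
Your proposal is correct and follows essentially the same argument as the paper: an induction that adds one point at a time and splits into the cases $w_i=0$ (one extra free parameter) and $w_i=1$ (the new lifting coordinate is uniquely determined by membership in the lifted line), the paper merely phrasing it as induction on $d$ by restricting to the first $d-1$ points. Your explicit verification that the lifted span $\widetilde{\pi}_l$ stays two-dimensional and does not contain $q$ for \emph{every} lifting is a welcome sharpening of a step the paper treats more briefly.
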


\begin{proof}
We proceed by induction on $d$, the size of the ground set. The base case $d=3$ is immediate.

For the inductive step, assume the result holds for all nilpotent matroids with at most $d-1$ elements. Let $M$ be a nilpotent matroid on $[d]$, and consider an arbitrary collection of vectors $\gamma \in V_{\mathcal{C}(M)}$ with no coinciding points or loops. Let $q \in \mathbb{C}^3$ be in general position with respect to $\gamma$. Our goal is to establish~\eqref{suma wi}. Let $\gamma'$ be the restriction of $\gamma$ to $\{p_1, \ldots, p_{d-1}\}$. By the inductive hypothesis, we have
\begin{equation}\label{suma wi 2}\dim_{q}^{\gamma'}(M_{w,d-1})=d-1-\sum_{i=1}^{d-1}w_{i}.\end{equation}
We consider two cases based on the value of $w_d = \deg(p_d)$.

\medskip
\textbf{Case 1.} Suppose that $\deg(p_{d}) = 0$. In this case, the point $p_{d}$ does not belong to any circuit of size three in $M$. Therefore, any lifting of the vectors in $\gamma$ is completely determined by a lifting of the vectors in $\gamma'$, together with an arbitrary lifting of the vector $\gamma_{p_{d}}$. Hence,
$\dimm{q}{\gamma}(M) = \dim_{q}^{\gamma'}(M_{w,d-1}) + 1$.
Substituting Equation~\eqref{suma wi 2}, we obtain Equation~\eqref{suma wi}, as desired.

\medskip
\textbf{Case~2.} Suppose that $\deg(p_{d})=1$. Then $p_{d}$ lies on a unique line $l$ of $M$. Any lifting of the vectors $\gamma$ is thus determined by a lifting of the vectors $\gamma'$, together with a unique scalar $z_{p_{d}}$ such that the lifted vector
$\gamma_{p_{d}} + z_{p_{d}} q$
belongs to the two-dimensional subspace
$\mathrm{span}\{\gamma_i : i \in l \setminus \{p_{d}\}\}$.
This subspace is two-dimensional because $\gamma \in V_{\CCC(M)}$ has no coinciding points, and the uniqueness of $z_{p_{d}}$ follows from the assumption that $\gamma$ has no loops. Therefore, the contribution from $p_{d}$ does not increase the dimension, and we have
$\dim_q^\gamma(M) = \dim_q^{\gamma'}(M_{w,d-1})$.
Applying Equation~\eqref{suma wi 2} again yields Equation~\eqref{suma wi}.
\end{proof}

\begin{definition}
For a nilpotent matroid $M$, we denote by $\dim(M)$ the number from Equation~\eqref{suma wi}.
\end{definition}

The next proposition provides an affirmative answer to Question~\ref{question} in the case where the degree of the point $d$ is at most two. 

\begin{proposition} \label{prop: degree < 3 add point liftable}
Let $M$ be a simple rank-three matroid on $[d]$ and suppose $\{\gamma_{1},\ldots,\gamma_{d-1}\}\subset \mathbb{P}^{2}$ is a collection of points lying on a common line $l\subset \mathbb{P}^{2}$ which is liftable from a point $q\in \mathbb{P}^{2}$ to a collection in $V_{\CCC(M\backslash d)}$. 
Moreover, assume that $\deg(d)\leq 2$. Then, there exists a point $\gamma_{d}\in l$ such that the extended collection $\{\gamma_{1},\ldots,\gamma_{d}\}$ is liftable from $q$ to a collection in $V_{\CCC(M)}$.
\end{proposition}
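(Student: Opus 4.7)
My plan is to choose $\gamma_d \in l$ together with a non-degenerate lift $\widetilde{\gamma}$ of the extended collection to $V_{\CCC(M)}$ simultaneously, using the freedom to re-choose the scalars $z_i$ for $i<d$ in addition to picking $(\gamma_d, z_d)$. A lift assigns $\widetilde{\gamma}_i = \gamma_i + z_i q \in \CC^3$; projectively $[\widetilde{\gamma}_i]$ lies on the line $[\gamma_i][q]$ in $\mathbb{P}^2$, and one recovers $\gamma_d \in l \backslash \{0\}$ from $\widetilde{\gamma}_d$ by projecting through $[q]$ onto $l$, which is well defined exactly when $\widetilde{\gamma}_d \neq [q]$. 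For each line $l_j$ of $M$ through $d$, the dependency at $l_j$ forces $\widetilde{\gamma}_d \in P_{l_j} := \sspan\{\widetilde{\gamma}_a : a \in l_j \backslash \{d\}\}$ whenever $\dim P_{l_j}=2$, and is automatic if $\dim P_{l_j}\leq 1$.

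I would split on $\deg(d) \in \{0,1,2\}$. For $\deg(d)=0$, no size-three circuit of $M$ contains $d$, so any nonzero $\gamma_d \in l$ and any $z_d$ extend the given lift. For $\deg(d)=1$ with unique line $l_1$, I use the given lift; since each $\widetilde{\gamma}_a = \gamma_a + z_a q$ with $\gamma_a \in l \backslash \{0\}$ and $q \notin l$ cannot be a scalar multiple of $q$, the subspace $P_{l_1}$ is not contained in $\CC q$. Hence $\mathbb{P}(P_{l_1}) \backslash \{[q]\}$ is nonempty, and any of its points decomposes along $\CC^3 = l \oplus \CC q$ into a valid pair $(\gamma_d,z_d)$ with $\gamma_d \in l \backslash \{0\}$.

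The main work is the case $\deg(d)=2$, with lines $l_1, l_2$ through $d$. Reductions: if $\dim P_{l_i}\leq 1$ for some $i$, that constraint is vacuous and we revert to the degree-one case; if $\mathbb{P}(P_{l_1})=\mathbb{P}(P_{l_2})$ the two constraints coincide and we reduce again. Otherwise the locus of allowed $\widetilde{\gamma}_d$ is the single projective point $\mathbb{P}(P_{l_1}) \cap \mathbb{P}(P_{l_2})$, and it remains to show that this point differs from $[q]$.

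The main obstacle is the sub-case in which $[q]$ lies on both $\mathbb{P}(P_{l_1})$ and $\mathbb{P}(P_{l_2})$, forcing their intersection to be $[q]$ for the fixed lift. I would resolve it by exploiting the non-uniqueness of the lift: re-choose the $z_a$ for $a \in l_j \backslash \{d\}$ so that the vectors $\widetilde{\gamma}_a$ become pairwise proportional, which drops $\dim P_{l_j}$ to one and trivializes that constraint. Dependencies internal to $l_j \backslash \{d\}$ still hold because the lifted vectors coincide, while dependencies along any other line of $M$ meeting $l_j \backslash \{d\}$ can be maintained by compensating via the remaining scalars $z_c$; a generic adjustment preserves the rank-three condition. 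Once both $l_j$-constraints are rendered vacuous, any $\gamma_d \in l \backslash \{0\}$ completes the extension. The delicate verification that the modified assignment still lies in $V_{\CCC(M\backslash d)}$ and remains non-degenerate is the technical heart of the argument.
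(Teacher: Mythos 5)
Your overall route is the same as the paper's: for $\deg(d)=0$ take $\gamma_d$ arbitrary; for $\deg(d)=1$ pick a point of the lifted line $\widetilde{\gamma}_{l_1}$ and project it back to $l$ through $q$; for $\deg(d)=2$ do the same with the intersection point $\widetilde{\gamma}_{l_1}\cap\widetilde{\gamma}_{l_2}$. Your degree-$0$ and degree-$1$ cases are correct, and in fact slightly more careful than the paper, which does not remark that the chosen point must differ from $[q]$ for the projection to return a nonzero $\gamma_d\in l$.

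The problem is your repair of the sub-case $[q]\in \mathbb{P}(P_{l_1})\cap\mathbb{P}(P_{l_2})$. First, making the vectors $\widetilde{\gamma}_a$, $a\in l_j\backslash\{d\}$, pairwise proportional by re-choosing the $z_a$ is only possible when the base points $\gamma_a$, $a\in l_j\backslash\{d\}$, already coincide on $l$: under $\CC^3=l\oplus\CC q$ the projection of $\widetilde{\gamma}_a$ along $q$ is $\gamma_a$, so proportional lifts force proportional base points. You should observe that $q\in P_{l_j}$ with $\dim P_{l_j}=2$ indeed forces exactly this coincidence, since otherwise your fix is not even available. Second, and more seriously, once you alter $z_a$ for $a\in l_j\backslash\{d\}$, every other line of $M\backslash\{d\}$ through such an $a$ may lose its rank-two condition, and ``compensating via the remaining scalars $z_c$'' is itself a liftability assertion of the same nature and difficulty as the proposition you are proving: you give no argument that the required compensations are mutually consistent (they can cascade through the whole configuration), nor that a rank-three lifting survives them. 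As written this step restates the difficulty rather than resolving it. For completeness: the paper's own Case~3 silently assumes the intersection point differs from $[q]$; in the degenerate sub-case the only admissible $\widetilde{\gamma}_d$ lies in $\langle q\rangle$, i.e.\ $\gamma_d=0$, so one must either accept the degenerate (zero) point of $l$ or genuinely re-choose the lifting, and the latter requires an argument you have not supplied.
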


\begin{proof}
We divide the proof into cases depending on the degree of $d$.
\medskip

{\bf Case~1.} Suppose that $\deg(d)=0$. In this case, we can extend $\gamma$ by selecting an arbitrary point $\gamma_{d}\in l$. The resulting collection is liftable from $q$ since any lifting of $\{\gamma_{1},\ldots,\gamma_{d-1}\}$ to a collection in $V_{\CCC(M\backslash \{d\})}$ and an arbitrary lifting of the point
$\gamma_{d}$ represents a lifting to a collection in $V_{\CCC(M)}$.

\medskip
{\bf Case~2.}
Suppose that $\deg(d) = 1$ and let $l$ be the unique line of $M$ to which it belongs. In this case, again any arbitrary point $\gamma_{d}\in l$ results in a collection which is liftable from $q$. To see this, consider the lifting of the points $\{\gamma_{1},\ldots,\gamma_{d-1}\}$ from $q$ to a collection $\widetilde{\gamma}$ in $V_{\CCC(M\backslash d)}$  and lift the point $\gamma_{d}$ from $q$ to a point in $\widetilde{\gamma}_{l}$ resulting in a non-degenerate collection in $V_{\CCC(M)}$.

\medskip
{\bf Case~3.} Suppose that $\deg(p) = 2$ and let $l_{1}$ and $l_{2}$ be the lines of $M$ containing $d$.
In this case, if $\widetilde{\gamma}\in V_{\CCC(M\backslash d)}$ is  a lifting of the points $\{\gamma_{1},\ldots,\gamma_{d-1}\}$ from $q$, extending $\gamma$ by setting $\gamma_d$ as the projection of $\widetilde{\gamma}_{l_{1}} \cap \widetilde{\gamma}_{l_{2}}$ from $q$ on $l$ works by applying the same argument as in Case~2.
\end{proof}

The preceding proposition essentially establishes that if $\deg(d)\leq 2$, then Question~\ref{question} has a positive answer. We now provide a sufficient condition for obtaining a positive answer when $\deg(d)\geq 3$. Before introducing the final property of this section, we recall a classical result from algebraic geometry, stated as Proposition~I.7.1 in \cite{Hartshorne}.

\begin{proposition} \label{prop: hartsorne}
    Let $Y\subset \CC^{n}$ and $Z\subset \CC^{n}$ be varieties of dimensions $r$ and $s$, respectively. Then every irreducible component of $Y\cap Z$ has dimension at least $r+s-n.$
\end{proposition}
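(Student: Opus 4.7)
The plan is to deduce this classical affine dimension theorem from Krull's Hauptidealsatz via the standard diagonal trick. First I would reduce to the case where both $Y$ and $Z$ are irreducible: any irreducible component of $Y \cap Z$ is contained in $Y' \cap Z'$ for some irreducible components $Y'$ of $Y$ and $Z'$ of $Z$, so it suffices to bound $\dim(Y' \cap Z')$ from below by $\dim Y' + \dim Z' - n$, and this implies the general statement.

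The geometric heart of the argument is to realize $Y \cap Z$ as an intersection with a very transparent subvariety of a doubled ambient space. Consider the product $Y \times Z \subset \CC^{n} \times \CC^{n} = \CC^{2n}$, which is irreducible of dimension $r + s$, and let $\Delta = \{(x,y) \in \CC^{2n} : x = y\}$ be the diagonal. The map $x \mapsto (x,x)$ gives an isomorphism $Y \cap Z \cong (Y \times Z) \cap \Delta$, so it suffices to prove the dimension bound for the latter intersection.

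Now $\Delta$ is cut out inside $\CC^{2n}$ by exactly $n$ linear equations $x_i - y_i = 0$ for $i = 1, \ldots, n$, so I would iteratively apply Krull's Hauptidealsatz: intersecting an irreducible variety with a single hypersurface either leaves it unchanged or drops the dimension of each of its irreducible components by exactly one. Starting from $Y \times Z$ of dimension $r+s$ and imposing these $n$ equations one at a time, each step drops the dimension of every surviving component by at most one, so every irreducible component of the final intersection has dimension at least $(r+s) - n$. Transporting back through the isomorphism yields the claimed bound on $Y \cap Z$.

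The main point to watch is the careful bookkeeping when invoking Krull's theorem at each stage, since under successive hyperplane cuts some components may be absorbed into the cutting hypersurface while others split into several pieces. The standard resolution is to note that in either case the dimension of each resulting component drops by at most one, which is exactly what the inductive estimate requires, so no further subtlety arises.
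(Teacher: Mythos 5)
Your proof is correct and coincides with the proof of the result the paper is invoking: the paper gives no argument of its own here, simply citing Hartshorne's Proposition I.7.1, whose proof is exactly this reduction to the diagonal in $\CC^{2n}$ followed by $n$ applications of Krull's Hauptidealsatz. One caveat on your opening reduction: it only yields $\dim W \ge \dim Y' + \dim Z' - n$ for the particular components $Y'\subseteq Y$, $Z'\subseteq Z$ containing a given component $W$ of $Y\cap Z$, which recovers $r+s-n$ only when $\dim Y'=r$ and $\dim Z'=s$. Indeed, the statement read literally for reducible varieties is false (take $Y$ the union of the plane $z=0$ and the line $x=y=0$ in $\CC^{3}$, and $Z$ the plane $z=1$), so it must be understood, as in Hartshorne, for irreducible $Y$ and $Z$ — which is the only case your diagonal argument, and the paper's application of the proposition, actually requires.
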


We now prove the main result of this subsection, which will be fundamental in what follows.

\begin{proposition}\label{prop: nilpotent add point liftable}
Let $M$ be a simple rank-three matroid on $[d]$. Then, within the framework of Question~\ref{question}, the answer is affirmative under the following assumptions:
\begin{enumerate}
\item $M \backslash \{d\}$ is nilpotent and satisfies $\dim(M \backslash \{d\}) \geq 1+\deg(d).$
\item $\gamma$ has no coinciding points.
\end{enumerate} 
\end{proposition}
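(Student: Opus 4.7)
The plan is to reduce to the case $k:=\deg(d)\geq 3$ (the case $k\leq 2$ being handled by Proposition~\ref{prop: degree < 3 add point liftable}) and to locate $\gamma_{d}$ by finding a lifting of $\gamma'$ whose induced ``lines through $d$'', namely the $2$-planes
\[L_{j}(\widetilde{\gamma}'):=\sspan(\widetilde{\gamma}_{i}:i\in l_{j}\setminus\{d\})\subset\CC^{3}\qquad(j\in[k]),\]
are concurrent in $\mathbb{P}^{2}$, where $l_{1},\ldots,l_{k}$ are the lines of $M$ through $d$. Any nonzero vector $\widetilde{\gamma}_{d}\in\bigcap_{j}L_{j}(\widetilde{\gamma}')$, projected from $q$ onto $l$, will then serve as the desired $\gamma_{d}$; the resulting extension lifts into $V_{\CCC(M)}$ because the size-$3$ circuits of $M$ involving $d$ are forced by the concurrency, while size-$4$ circuits hold automatically in $\CC^{3}$.

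The first step is to analyze the lifting space $L:=\Lift_{M\setminus d,q}(\gamma')\subset\CC^{d-1}$ via Theorem~\ref{qgamma is constant}. Its hypotheses hold: (i) supplies nilpotence of $M\setminus d$, (ii) forbids coinciding points in $\gamma'$, loops are excluded because $\gamma'\subset\mathbb{P}^{2}$, and $q$ is in general position by construction. This yields $\dim L=\dim(M\setminus d)\geq 1+k$; moreover, since all $\gamma_{i}$ lie on the plane $l$, every size-three circuit relation expands linearly in $\underline{z}$, so $L$ is in fact a linear subspace of $\CC^{d-1}$. I would then introduce the concurrency locus $W\subset L$ defined by the $k-2$ determinantal relations expressing that $L_{1}(\widetilde{\gamma}'),L_{2}(\widetilde{\gamma}'),L_{j}(\widetilde{\gamma}')$ are concurrent for $j\geq 3$, and apply Proposition~\ref{prop: hartsorne}: as soon as $W$ is known to be non-empty, every irreducible component of $W$ has dimension at least $\dim L-(k-2)\geq 3$.

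The main obstacle is thus non-emptiness of $W$, since Proposition~\ref{prop: hartsorne} by itself gives no existence assertion in the affine setting. To overcome this, I plan to exhibit an explicit $2$-dimensional ``degenerate'' subvariety $D\subset W$. For each $2$-plane $\pi\subset\CC^{3}$ not containing $q$, the condition $\widetilde{\gamma}_{i}\in\pi$ for all $i<d$ uniquely determines $\underline{z}$; this $\underline{z}$ lies automatically in $L$ (all circuit relations trivialize once the $\widetilde{\gamma}_{i}$ are coplanar) and satisfies $L_{j}(\widetilde{\gamma}')=\pi$ for every $j$, so $\widetilde{\gamma}'\in W$. Parametrizing by $\pi$, a $2$-dimensional family, gives $\dim D=2$, so in particular $W\neq\emptyset$. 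Combined with the bound $\dim W\geq 3$, this forces $D$ to lie strictly inside some component of $W$, whence $W\setminus D\neq\emptyset$.

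To conclude, I would pick any $\widetilde{\gamma}'\in W\setminus D$: the vectors $(\widetilde{\gamma}_{i})_{i<d}$ then span $\CC^{3}$, and after restricting to a dense open subset of $W\setminus D$ (to rule out accidental coincidences $L_{j}=L_{j'}$) the planes $L_{j}(\widetilde{\gamma}')$ are distinct and meet in a unique $1$-dimensional subspace $\is{P}\subset\CC^{3}$. Setting $\widetilde{\gamma}_{d}=P$ and letting $\gamma_{d}$ be the projection of $\widetilde{\gamma}_{d}$ from $q$ onto $l$ produces the required non-degenerate lifting of $\{\gamma_{1},\ldots,\gamma_{d}\}$ into $V_{\CCC(M)}$. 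The delicate point throughout is the construction of $D$: it simultaneously certifies $W\neq\emptyset$ and, being of strictly smaller dimension than the Hartshorne bound on components of $W$, guarantees the existence of the needed non-degenerate elements.
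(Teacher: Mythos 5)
Your proposal is correct and follows essentially the same route as the paper's proof: compute $\dim$ of the lifting space via Theorem~\ref{qgamma is constant}, cut it by the $\deg(d)-2$ concurrency conditions, invoke Proposition~\ref{prop: hartsorne} to get dimension at least $3$, and exceed the $2$-dimensional family of degenerate (coplanar) liftings to find a non-degenerate concurrent one, then project the intersection point from $q$ onto $l$. Your explicit construction of the degenerate family $D$ just makes precise what the paper asserts when it notes $0\in Y\cap Z$ and that trivial liftings form a two-dimensional space.
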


\begin{proof}
Let $\mathcal{L}_{d}$ denote the set of lines of $M$ containing the point $d$.
We will first prove that there exists a vector $z=(z_{1},\ldots,z_{d})$ such that the
lifting $\tau_{z}=\{\gamma_{1}+z_{1}q,\ldots,\gamma_{d-1}+z_{d-1}q\}$ of the points $\{\gamma_{1},\ldots,\gamma_{d-1}\}$ satisfies the following conditions:
\begin{enumerate}
\item $\tau_{z}\in V_{\CCC(M\backslash \{d\})}$.
\item The lines $\{(\tau_{z})_{l}:l\in \mathcal{L}_{d}\}$ are concurrent.
\item $\tau_{z}$ is non degenerate.
\end{enumerate}
We define two varieties $Y,Z\subset \CC^{d}$ as follows:
\[Y=\{z\in \CC^{d}:\tau_{z}\in V_{\CCC(M\backslash \{d\})}\}, \quad \text{and} \quad Z=\{z\in \CC^{d}:\text{the lines $\{(\tau_{z})_{l}:l\in \mathcal{L}_{d}\}$ are concurrent}\}.\]
By definition, $Y$ is precisely the linear space $\Lift_{M\backslash \{d\},q}(\gamma)$ introduced in Definition~\ref{definition dim lift}, and thus
\[\dim(Y)=\dim_{q}^{\gamma}(M\backslash \{d\})=\dim(M\backslash \{d\}),\]
where the final equality follows from Theorem~\ref{qgamma is constant}.

To describe $Z$, let $\{l_{1},\ldots,l_{\deg(d)}\}$ be the lines in $\mathcal{L}_{d}$, and for each $l_{i}$, choose two distinct points $p_{i_{1}}$ and $p_{i_{2}}$ on $l_{i}$. Then, $Z$ consists of those $z\in \CC^{d}$ for which the following families of lines are concurrent:
\begin{equation*}
    \begin{array}{ll}
       \{(\tau_{z})_{l_{1}},(\tau_{z})_{l_{2}},(\tau_{z})_{l_{3}}\}
        , 
        \ldots,  
       \{(\tau_{z})_{l_{1}},(\tau_{z})_{l_{2}},(\tau_{z})_{l_{\deg(d)}}\}.
    \end{array}
\end{equation*}
Using the Grassmann-Cayley algebra, we have that the concurrency of these $\text{deg}(d)-2$ triples of lines can be translated as the vanishing of the following $\text{deg}(d)-2$ polynomials:
\[((\tau_{z})_{p_{1_{1}}} \vee (\tau_{z})_{p_{1_{2}}}) \wedge ((\tau_{z})_{p_{2_{1}}} \vee (\tau_{z})_{p_{2_{2}}}) \vee ((\tau_{z})_{p_{j_{1}}} \vee (\tau_{z})_{p_{j_{2}}}),\]
for $3\leq j\leq d$. Since $Z$ is given by the vanishing locus of $\deg(d)-2$ polynomials, we have 
$\dim(Z)\geq d+2-\deg(d)$.
Note that $0\in Y\cap Z$, so $Y\cap Z$ is nonempty. By Proposition~\ref{prop: hartsorne}, 
\[\dim(Y\cap Z)\geq \dim(Y)+\dim(Z)-d\geq \dim(M\backslash \{d\})+2-\deg(d).\]
Since $\dim(M\backslash \{d\})\geq \deg(d)+1$ by hypothesis, it follows that
$\dim(Y\cap Z)\geq 3$.
Since the space of trivial liftings of $\gamma$, corresponding to degenerate liftings, has dimension two, we then conclude that there exists $z\in Y\cap Z$ such that $\tau_{z}$ is non-degenerate. This lifting satisfies all the desired conditions.

Finally, let $x$ be the unique point of intersection of the lines $\{(\tau_{z})_{l} : l \in \mathcal{L}_{d}\}$. We define $\gamma_{d}$ as the projection of $x$ from $q$ onto the line $l$. This yields a liftable collection with respect to $M$ and $q$.
\end{proof}

\subsection{Pascal Configuration}

In this section, we study the Pascal configuration, the simple rank-three matroid in Figure~\ref{fig:pascal}(b). Specifically, we determine the irreducible decomposition of its circuit variety and find a finite generating set for its matroid ideal, up to radical. Throughout this subsection, we denote this matroid by $M$.

\subsubsection{Irreducible decomposition of $V_{\mathcal{C}(M)}$}

We introduce the following notation for the remainder of the paper:
\begin{notation}\label{notation pi}
For a matroid $N$ of rank at most three on $[d]$, we define:
\begin{itemize}
\item $\pi_N^i$ as the matroid on $[d]$ in which all points, except for $i$, lie on a single line. Moreover, points that share a common line with $i$ in $N$ are identified. See Figure~\ref{fig:pascal} (a) for an illustration of $\pi_{M}^{1}$.
\item $N(i)$ denotes the matroid obtained by making $i$ a loop. More generally, for a subset $\{i_{1},\ldots,i_{k}\}\subset [d]$, we define $N(i_{1},i_{2}, \ldots, i_{k})$ as the matroid obtained by turning all points in $\{i_{1},i_{2}, \ldots, i_{k}\}$ into loops. This matroid is isomorphic to $N\backslash \{i_{1},\ldots,i_{k}\}$ up to removal of loops.
\item Given a collection of vectors $\{\gamma_{1},\ldots,\gamma_{d}\}\subset \CC^{3}$ and a subset $A\subset [d]$, we denote by $\gamma_{A}\subset \CC^{3}$ the linear span of the vectors $\{\gamma_{a}:a\in A\}$.
    \end{itemize}
\end{notation}

Moreover, the following remark will be used throughout the remainder of the paper.

\begin{remark}
For any given configuration, there exists a sufficiently small perturbation that does not introduce new dependencies. Throughout, we assume that all perturbations satisfy this condition.
\end{remark}

The following lemmas 
plays a key role in our discussion. Recall that for a matroid $N$, we denote by $N(i)$ the matroid obtained from $N$ by declaring $i$ to be a loop.

\begin{lemma} \label{lemma: third config loop matroid variety}
    Let $M$ be a simple rank-three matroid on $[d]$ and assume that the point $i\in [d]$ has degree at most two. Then $V_{M(i)} \subseteq V_M$.
\end{lemma}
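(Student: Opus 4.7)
The plan is to reduce the inclusion $V_{M(i)} \subseteq V_M$ to a perturbation argument on realizations. Since $V_{M(i)} = \closure{\Gamma_{M(i)}}^{\textup{Zariski}} = \closure{\Gamma_{M(i)}}^{\textup{Euclidean}}$ and $V_M = \closure{\Gamma_M}^{\textup{Euclidean}}$, it suffices to show that every $\gamma \in \Gamma_{M(i)}$ can be approximated arbitrarily well by elements of $\Gamma_M$.

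Fix $\gamma \in \Gamma_{M(i)}$, so $\gamma_i = 0$ while $\{\gamma_j : j \neq i\}$ realizes $M \setminus \{i\}$. I would set $\widetilde{\gamma}_j = \gamma_j$ for $j \neq i$, possibly after a small perturbation within $\Gamma_{M \setminus \{i\}}$, and choose a small nonzero $\widetilde{\gamma}_i$ according to $\deg(i) \in \{0,1,2\}$:
\begin{itemize}
\item If $\deg(i) = 0$, let $\widetilde{\gamma}_i$ be a small generic vector.
\item If $\deg(i) = 1$ with unique line $l \ni i$ in $M$, let $\widetilde{\gamma}_i$ be a small generic vector in the $2$-dimensional subspace $\gamma_l$.
\item If $\deg(i) = 2$ with lines $l_1, l_2 \ni i$, let $\widetilde{\gamma}_i$ be a small nonzero vector in the line $\ell := \gamma_{l_1} \cap \gamma_{l_2}$.
\end{itemize}
In the third case, $\ell$ is genuinely $1$-dimensional because $\gamma_{l_1} \neq \gamma_{l_2}$: otherwise, for any $a \in l_1 \setminus \{i\}$ and any distinct $b,c \in l_2 \setminus \{i\}$, the triple $\{\gamma_a,\gamma_b,\gamma_c\}$ would be linearly dependent, contradicting $\{\gamma_j : j \neq i\} \in \Gamma_{M\setminus\{i\}}$ since $\{a,b,c\}$ is not a circuit of $M$.

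In every case the dependencies $\{i,j,k\}$ of $M$ are satisfied by construction, and all $4$-element circuits are automatic in $\mathbb{C}^3$. What remains is to rule out spurious $3$-term dependencies $\{\widetilde{\gamma}_i,\gamma_j,\gamma_k\}$, which in cases 1 and 2 is handled by choosing $\widetilde{\gamma}_i$ generically within a positive-dimensional set. The main obstacle is case 3, where $\widetilde{\gamma}_i$ is pinned to the specific line $\ell$. Here I would argue that the obvious obstructions, such as a point $\gamma_j$ landing on $\ell$ or a chord $\sspan(\gamma_j,\gamma_k)$ containing $\ell$ for $j \in l_1, k \in l_2$, are already excluded by $\{\gamma_j : j \neq i\}$ being a realization of $M\setminus\{i\}$ (for instance $\gamma_j \in \ell$ would force $j \in l_1 \cap l_2 = \{i\}$, a contradiction). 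Any remaining accidental collinearities are destroyed by an arbitrarily small preliminary perturbation of $\{\gamma_j : j \neq i\}$ inside $\Gamma_{M\setminus\{i\}}$, since the realizations satisfying all the required transversality conditions form a Zariski-open dense subset. The resulting $\widetilde{\gamma}$ then lies in $\Gamma_M$ arbitrarily close to $\gamma$, proving $\gamma \in V_M$.
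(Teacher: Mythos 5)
Your argument follows the same route as the paper's proof: keep $\{\gamma_j : j\neq i\}$ essentially fixed and move the loop $\gamma_i$ off the origin onto the appropriate locus (a generic point, a generic point of the single line, or the intersection of the two lines) according to $\deg(i)$. The cases $\deg(i)\le 1$ are fine, and your verifications that $\ell=\gamma_{l_1}\cap\gamma_{l_2}$ is one-dimensional and that the ``obvious'' incidences (a point $\gamma_j$ landing on $\ell$, or a chord with one endpoint on each of $l_1,l_2$ passing through $\ell$) are automatically excluded are correct, and in fact more careful than what the paper writes.

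The gap is in your final sentence. For a basis $\{i,j,k\}$ of $M$ not covered by those automatic cases you still need $\ell\not\subset\sspan(\gamma_j,\gamma_k)$, and you dispose of this by asserting that the realizations of $M\setminus\{i\}$ satisfying all such transversality conditions form a Zariski-open \emph{dense} subset of $\Gamma_{M\setminus\{i\}}$. Openness is clear, but density is not: $\Gamma_{M\setminus\{i\}}$ need not be irreducible, and worse, the open set can be empty, because the incidence $\ell\subset\sspan(\gamma_j,\gamma_k)$ can be forced by a projective incidence theorem. The non-Pappus matroid (the Pappus configuration with the circuit $\{7,8,9\}$ relaxed to a basis) is exactly such an example: the point $9$ has degree two and $M\setminus\{9\}$ is realizable, yet Pappus's theorem forces $\gamma_{\{2,6\}}\cap\gamma_{\{3,5\}}$ to lie on $\sspan(\gamma_7,\gamma_8)$ in every realization, so $V_{M(9)}\neq\emptyset$ while $V_M=\emptyset$ and the stated inclusion fails. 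To be fair, the paper's own proof shares this gap --- it simply asserts that the perturbed configuration lies in $\Gamma_M$ --- and in the places where the lemma is actually invoked (minors of the Pascal and Pappus configurations) the required transversality can be checked directly; but neither your argument nor the paper's supplies that check, and no argument can in the stated generality.
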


\begin{proof}
We show that any collection $\gamma \in \Gamma_{M(i)}$ can be perturbed to a collection in $\Gamma_{M}$, hence $\gamma\in V_{M}$. Let $\gamma \in \VCM$, and consider the set $R =\{\gamma_l:l \in \mathcal{L}_i \textup{ and } \rk(\gamma_l)=2\}$. Since $i$ has degree at most two, we have $|R| \leq 2.$ If $|R| = 0$, then perturb $\gamma$ arbitrarily away from the origin. If $|R|=1$, then perturb $\gamma$ on the unique line in $R$, away from the origin. If $|R|=2$, then perturb $\gamma$ on the intersection of the two distinct lines in $R$. In each case, the resulting configuration lies in $\Gamma_M$, as desired.
\end{proof}

\begin{figure}[H]
    \centering
    \includegraphics[width=0.55\linewidth]{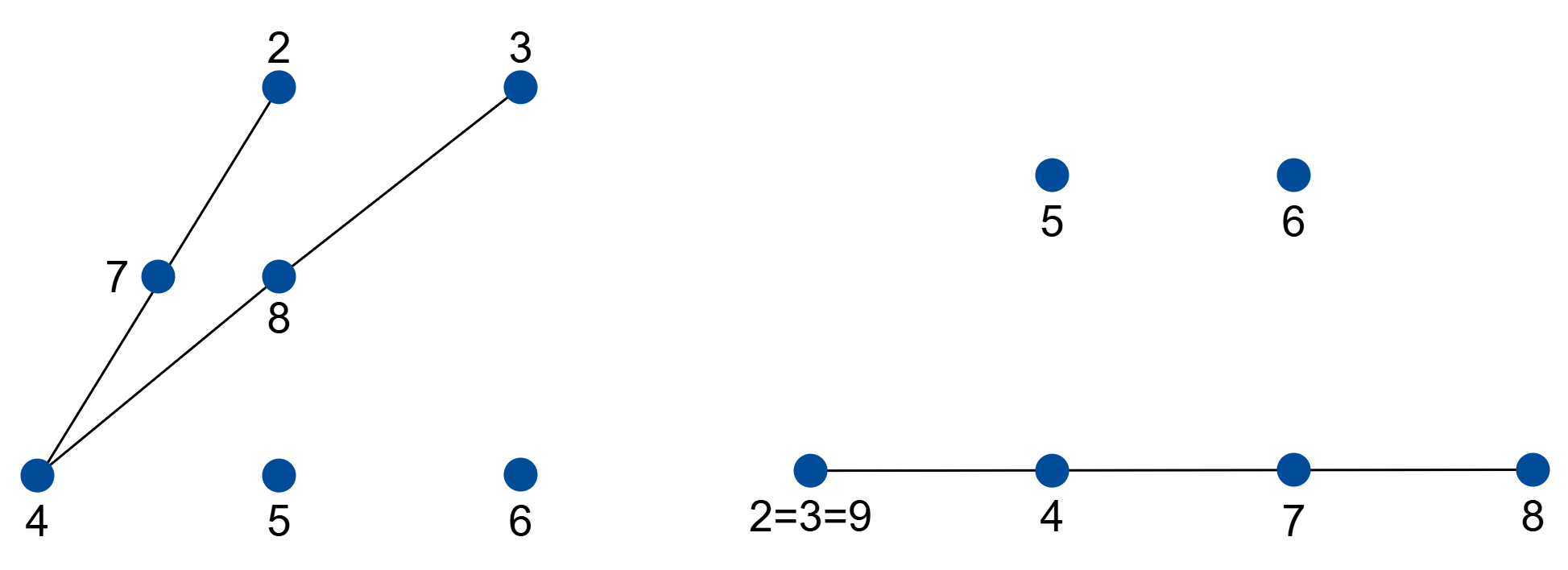}
    \caption{(Left) Matroid $N \backslash \{9\}$; (Right) Matroid $A$. Both arise in the proof of Lemma~\ref{props}.}
    \label{fig:appendix}
\end{figure}

\begin{lemma}\label{props}
    Let $N$ be the simple rank-three matroid depicted in Figure~\ref{fig:pascal} (c). Then, we have \begin{equation}\label{n4}V_{\mathcal{C}(N)} = V_{N} \cup V_{N(9)}.\end{equation}
\end{lemma}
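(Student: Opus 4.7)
The inclusion $V_{N} \cup V_{N(9)} \subseteq V_{\mathcal{C}(N)}$ is immediate: $V_{N} \subseteq V_{\mathcal{C}(N)}$ holds by definition, while $V_{N(9)} \subseteq V_{\mathcal{C}(N)}$ follows because declaring $9$ to be a loop only introduces additional dependencies, so $\mathcal{C}(N) \subseteq \mathcal{C}(N(9))$ and hence every realization of $N(9)$ satisfies all the circuit conditions of $N$.

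For the reverse inclusion $V_{\mathcal{C}(N)} \subseteq V_{N} \cup V_{N(9)}$, I will follow Strategy~\ref{strategy}. Fix $\gamma \in V_{\mathcal{C}(N)}$ and split into two cases according to whether $\gamma_{9} = 0$ or $\gamma_{9} \neq 0$.

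\textbf{Case 1: $\gamma_{9} = 0$.} Then $\gamma \in V_{\mathcal{C}(N(9))}$, and $N(9)$, viewed after removing the loop, is precisely $N \setminus \{9\}$ (shown in Figure~\ref{fig:appendix} left). The plan is to argue that $N \setminus \{9\}$ has sufficiently simple structure, for instance being a cactus configuration whose degree-$\geq 3$ points form no cycle, or a nilpotent matroid with no point of degree greater than two, so that either Theorem~\ref{thm: main theorem cactus matroid ideal} (combined with Theorem~\ref{cact irr}) or Theorem~\ref{nil coincide}(i) yields $V_{\mathcal{C}(N\setminus\{9\})} = V_{N\setminus\{9\}}$. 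Pulling this back and allowing $9$ to remain at the origin gives $\gamma \in V_{N(9)}$.

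\textbf{Case 2: $\gamma_{9} \neq 0$.} Here the goal is to produce an arbitrarily small perturbation $\widetilde{\gamma} \in \Gamma_{N}$, which will place $\gamma$ in $V_{N}$ via the coincidence of Euclidean and Zariski closures. The plan is to first perturb $\gamma|_{[d]\setminus\{9\}}$ to a realization $\widetilde{\gamma}' \in \Gamma_{N\setminus\{9\}}$ using the structural result cited for $N\setminus\{9\}$ in Case 1. Then point $9$ must be reinserted so that all lines of $N$ containing $9$ remain concurrent at $\widetilde{\gamma}_{9}$. If $\deg_{N}(9) \leq 2$ this is automatic by Proposition~\ref{prop: degree < 3 add point liftable}; if $\deg_{N}(9) \geq 3$, I would apply Proposition~\ref{prop: nilpotent add point liftable} to the collinear projections from $\gamma_{9}$, after verifying that $N \setminus \{9\}$ is nilpotent and that $\dim(N \setminus \{9\}) \geq 1 + \deg_{N}(9)$, which then produces a lifting whose lines through the position of $9$ are concurrent; taking that intersection as $\widetilde{\gamma}_{9}$ finishes the construction.

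The main obstacle is Case 2: ensuring that the perturbation of $\gamma|_{[d]\setminus \{9\}}$ can be carried out \emph{while simultaneously} preserving the concurrency at $\gamma_{9}$ of all lines of $N$ through $9$. This is a nontrivial codimension condition in the space of perturbations, and handling it cleanly is exactly the role of the liftability machinery. I expect the verification that $N \setminus \{9\}$ satisfies the nilpotency and dimension bound required by Proposition~\ref{prop: nilpotent add point liftable} (or, alternatively, the direct combinatorial perturbation argument when $\deg_{N}(9) \leq 2$) to be the only real work; the rest assembles according to the template in Strategy~\ref{strategy}.
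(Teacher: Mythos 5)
Your Case~1 ($\gamma_9=0$) matches the paper: $N\setminus\{9\}$ is nilpotent with all points of degree at most two, so Theorem~\ref{nil coincide}(i) gives $V_{\CCC(N(9))}=V_{N(9)}$. The problem is Case~2, where your plan does not go through. You propose to perturb $\gamma|_{\{2,\ldots,8\}}$ into $\Gamma_{N\setminus\{9\}}$ and then reinsert $9$. But $9$ has degree three in $N$ (it lies on $\{2,6,9\}$, $\{3,5,9\}$, $\{7,8,9\}$), so reinserting it requires the three perturbed lines $\widetilde{\gamma}_2\widetilde{\gamma}_6$, $\widetilde{\gamma}_3\widetilde{\gamma}_5$, $\widetilde{\gamma}_7\widetilde{\gamma}_8$ to be concurrent --- a codimension-one condition that a generic realization of $N\setminus\{9\}$ violates. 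The tools you invoke to fix this cannot apply: Propositions~\ref{prop: degree < 3 add point liftable} and~\ref{prop: nilpotent add point liftable} concern lifting a \emph{rank-two} (collinear) configuration from an external point $q$ under a liftability hypothesis, whereas Lemma~\ref{props} makes no assumption that $\gamma\in V(I_N^{\textup{lift}})$ or $\gamma\in V(G_N)$, and the configuration here is not assumed collinear. So the "main obstacle" you flag is real and your proposed machinery does not remove it.

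The paper avoids this by deleting a \emph{degree-one} point instead of $9$: assuming (after Lemma~\ref{lemma: third config loop matroid variety}) that $\gamma$ has no loops and that $\gamma_2\neq\gamma_9$ (or symmetrically $\gamma_3\neq\gamma_9$), it perturbs $\gamma|_{N\setminus\{6\}}$ using Theorem~\ref{nil coincide}(i) --- in $N\setminus\{6\}$ every point, including $9$, has degree at most two --- and then places $6$ back on the perturbed line through $\widetilde{\gamma}_2,\widetilde{\gamma}_9$, which is close to the original line precisely because $\gamma_2\neq\gamma_9$. The genuinely hard case, entirely absent from your proposal, is $\gamma_2=\gamma_3=\gamma_9$: there the circuit conditions force $\{\gamma_2,\gamma_4,\gamma_7,\gamma_8\}$ to be collinear, producing a degenerate configuration $A$, and one must prove $V_A\subseteq V_N$ by exhibiting an explicit parametrization of $\Gamma_N$ and a limiting family $\gamma_\epsilon\to\xi$. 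That degeneration argument is the substantive content of the lemma, and your proof as written would need to be restructured around it.
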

\begin{proof} 
Let $\gamma \in \VCN$. We will prove that $\gamma$ belongs to one of the varieties on the right-hand side of~\eqref{n4}. First, suppose that $\gamma_{9} = 0$. Since $N \backslash \{9\}$, shown in Figure~\ref{fig:appendix} (Left), is nilpotent with all points of degree at most two, Theorem~\ref{nil coincide} (i) implies that $V_{\CCC(N \backslash \{9\})} = V_{N \backslash \{9\}}$, which further implies $V_{\CCC(N(9))} = V_{N(9)}$. From this, we conclude that $\gamma \in V_{N(9)}$.

Now suppose that $\gamma_9 \neq 0$. Then, by Lemma~\ref{lemma: third config loop matroid variety}, we may assume that $\gamma$ has no loops.

\medskip
{\bf Case~1.} Suppose that $\gamma_2 \neq \gamma_9$ or $\gamma_3 \neq \gamma_9$. Without loss of generality, assume that $\gamma_2 \neq \gamma_9$. By Theorem~\ref{nil coincide} (i), we can apply a perturbation to the vectors $\{\gamma_2, \gamma_3, \gamma_4, \gamma_5, \gamma_7, \gamma_8, \gamma_9\}$ to obtain a collection $\{\widetilde{\gamma}_2, \widetilde{\gamma}_3, \widetilde{\gamma}_4, \widetilde{\gamma}_5, \widetilde{\gamma}_7, \widetilde{\gamma}_8, \widetilde{\gamma}_9\} \in \Gamma_{N \backslash \{6\}}$.
Since $\gamma_2 \neq \gamma_9$, the line spanned by $\widetilde{\gamma}_2$ and $\widetilde{\gamma}_9$ is a perturbation of the line through $\gamma_2$ and $\gamma_9$. Since the only line in $N$ containing the point $6$ is $\{2,9,6\}$, we can perturb $\gamma_6$ so that it lies on the line through $\widetilde{\gamma}_2$ and $\widetilde{\gamma}_9$. This yields a collection of vectors in $\Gamma_N$, showing that $\gamma \in V_N$.

\medskip
{\bf Case~2.} Suppose that $\gamma_2= \gamma_3= \gamma_9$. Since $\{2,7,4\},\{7,8,9\}$ and $\{3,4,8\}$ are dependent in $N$ and $\gamma\in V_{\CCC(N)}$, the sets of vectors $\{\gamma_2,\gamma_7,\gamma_4\},\{\gamma_7,\gamma_8,\gamma_9\}$ and $\{\gamma_3,\gamma_4,\gamma_8\}$ are dependent. Moreover, using that $\gamma_2= \gamma_3= \gamma_9$, we obtain that the sets of vectors $\{\gamma_2,\gamma_7,\gamma_4\}, \{\gamma_2,\gamma_7,\gamma_8\}, \{\gamma_2,\gamma_4,\gamma_8\}$ are dependent.
Hence, $\{\gamma_2,\gamma_4,\gamma_7,\gamma_8\}$ are collinear in $\mathbb{P}^{2}$.

Let $A$ be the rank-three matroid on the ground set $\{2,\ldots,9\}$, in which $2=3=9$ and the points $\{2,4,7,8\}$ form a line with the points $5$ and $6$ lying outside of it, as in Figure \ref{fig:appendix} (Right). Given that $\rank \{\gamma_2,\gamma_4,\gamma_7,\gamma_8\}\leq 2$, it follows that $\gamma \in V_{\CCC(A)}$.
After the identification of the double point $\{2,3,9\}$ as a simple point in $A$, the resulting matroid is nilpotent and all of its points have degree at most two. By Theorem~\ref{nil coincide} (i), we conclude that $V_{\CCC(A)} = V_A$. We will end by showing that $V_A \subseteq V_{N}$, which would imply $\gamma \in V_{A}\subset V_{N}$, completing the proof. We will use a similar technique as in \cite{liwski2025minimal}.

Following the constructive approach outlined in the proof of \textup{\cite[Theorem~4.5]{Connectednessandcombinatorialinterplayinthemodulispaceoflinearrangements}}, we find that any element $\gamma \in \Gamma_N$ can be written as \begin{equation}\label{matrix gama}\gamma = \begin{pmatrix} 
1 & 0 & 0 & 1 & 1+v & 1& 1+v & 1 \\
0 & 1 & 0 & 1 & 1 & 0 & 1+z & w+zw \\
0 & 0 & 1 & 1 & 1 & w & 1 & w 
\end{pmatrix},\end{equation}
where the minors corresponding to the bases are nonzero, and the columns are indexed, from left to right, by $\{9,4,5,6,2,3,7,8\}$.
To prove that $V_{A}\subset V_{N}$, it suffices to show that any $\xi\in \Gamma_{A}$ lies in $V_{N}$. We will show this by seeing that we can infinitesimally perturb $\xi$ to obtain an element in $\Gamma_{N}$. Following the construction in the proof of \textup{\cite[Theorem~4.5]{Connectednessandcombinatorialinterplayinthemodulispaceoflinearrangements}}, we find that any element $\xi \in \Gamma_{A}$ has the form $$\xi = \begin{pmatrix}
1 & 0 & 0 & 1 & 1 & 1 & 1 & 1 \\
0 & 1 & 0 & 1 & 0 & 0 & x & y \\
0 & 0 & 1 & 1 & 0 & 0 & 0 & 0
\end{pmatrix},$$
where, the columns correspond to $\{9,4,5,6,2,3,7,8\}$ and the minors corresponding with the bases are non-zero. 
For any $\epsilon$, define $\gamma_\epsilon\in \Gamma_N$ by substituting into the parametrization~\eqref{matrix gama} the values
\[1+v = \frac{1}{\epsilon}, \quad
    w  =  \frac{\epsilon}{x}y, \quad   
    1+z =  \frac{x}{\epsilon}.\]
    
As $\epsilon \to 0$, we get that $\gamma_\epsilon \to\xi$, which implies $\xi \in V_N$. Since $\xi$ was arbitrary, it follows that $\Gamma_{A} \in V_N$, and therefore $V_{A} \in V_N$, completing the proof.
\end{proof}
We now present the irreducible decomposition of the circuit variety of the Pascal configuration.
\begin{theorem} \label{proposition: decomposition pascal configuration}
The circuit variety $V_{\mathcal{C}(M)}$ admits the following irreducible decomposition:
\begin{equation}\label{ecuacion}
V_{\mathcal{C}(M)} = V_M \cup V_{U_{2,9}} \cup_{i=7}^9 V_{M(i)}.
\end{equation}
\end{theorem}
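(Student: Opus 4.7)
I would establish the decomposition in Equation~\eqref{ecuacion} by proving both inclusions and then verifying that the right-hand side gives an irredundant union of irreducible varieties.

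The inclusion $\supseteq$ is immediate. By definition, $V_M\subseteq V_{\CCC(M)}$. Any configuration of rank at most two has every triple dependent, in particular all size-three circuits of $M$, hence $V_{U_{2,9}}\subseteq V_{\CCC(M)}$. For each $i\in\{7,8,9\}$, the matroid $M(i)$ has strictly more dependencies than $M$, so $V_{M(i)}\subseteq V_{\CCC(M(i))}\subseteq V_{\CCC(M)}$.

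For the reverse inclusion, I take $\gamma\in V_{\CCC(M)}$ and argue by cases. If $\rk(\gamma)\leq 2$ then $\gamma\in V_{U_{2,9}}$. Otherwise $\rk(\gamma)=3$. By inspection of the Pascal configuration, the six hexagon points $\{1,\ldots,6\}$ have degree at most two in $M$, so if $\gamma_j=0$ for some $j\in\{1,\ldots,6\}$, Lemma~\ref{lemma: third config loop matroid variety} gives $\gamma\in V_{M(j)}\subseteq V_M$. If $\gamma_j=0$ for a Pascal point $j\in\{7,8,9\}$, then $\restr{\gamma}{[9]\backslash\{j\}}\in V_{\CCC(M\backslash j)}$; identifying $M\backslash j$ with the matroid $N$ of Figure~\ref{fig:pascal}(c) up to relabeling and applying Lemma~\ref{props}, this restriction lies either in $V_{M\backslash j}$, giving $\gamma\in V_{M(j)}$, or in $V_{(M\backslash j)(k)}$ for some other Pascal point $k$, in which case $\gamma_k=0$ as well and $\gamma\in V_{M(j,k)}\subseteq V_{M(k)}$.

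The remaining case, which is the heart of the argument, is $\rk(\gamma)=3$ with $\gamma_j\neq 0$ for all $j$; the goal is $\gamma\in V_M$. The plan is to delete one Pascal point, say $j=9$, and consider $\gamma':=\restr{\gamma}{[9]\backslash\{9\}}\in V_{\CCC(M\backslash 9)}$. Under the identification $M\backslash 9\cong N$, Lemma~\ref{props} gives $\gamma'\in V_N\cup V_{N(9)}$, and the no-loop hypothesis rules out $V_{N(9)}$, so $\gamma'\in V_{M\backslash 9}$. I then perturb $\gamma'$ to a realization $\widetilde{\gamma}'$ of $M\backslash 9$. By Pascal's theorem, the three lines of $M$ through the point $9$ are concurrent in any realization of the hexagon, so their common intersection point in $\widetilde{\gamma}'$ can be taken as $\widetilde{\gamma}_9$. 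This produces a realization of $M$ arbitrarily close to $\gamma$, placing $\gamma\in\closure{\Gamma_M}=V_M$.

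Finally, for irredundancy, each component is irreducible: $V_{U_{2,9}}$ is the classical determinantal variety of rank $\leq 2$; $V_M$ is irreducible by the rational parametrization of the Pascal realization space (a hexagon inscribed in a conic); and each $V_{M(j)}$ reduces to the matroid variety of $M\backslash j\cong N$, which admits an analogous parametric description. Non-containment is visible from the characterizing conditions: $V_{U_{2,9}}$ is cut out by $\rk\leq 2$, which fails generically on $V_M$ and on the $V_{M(j)}$; each $V_{M(j)}$ is characterized by $\gamma_j=0$, distinguishing it from $V_M$ and from $V_{M(j')}$ with $j'\neq j$. I expect the main obstacle to be the rank-three no-loop case: closing that argument requires invoking Pascal's theorem itself, namely the concurrency of the three lines through each Pascal point in any realization of the inscribed hexagon, which is precisely the geometric content that distinguishes the Pascal configuration and which the liftability framework of Proposition~\ref{prop: nilpotent add point liftable} is designed to encode algebraically.
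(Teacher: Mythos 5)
Your main case (rank three, no loops) contains a fatal error. You delete the Pascal point $9$, perturb $\gamma|_{[8]}$ to a realization $\widetilde{\gamma}'$ of $M\backslash\{9\}$, and then claim that ``by Pascal's theorem'' the three lines of $M$ through $9$ (namely $\widetilde{\gamma}_{\{2,6\}}$, $\widetilde{\gamma}_{\{3,5\}}$, $\widetilde{\gamma}_{\{7,8\}}$) are concurrent, so that their common point can serve as $\widetilde{\gamma}_9$. This is backwards: Pascal's theorem asserts concurrency precisely when the hexagon $1,\ldots,6$ is inscribed in a conic, and a generic realization of $M\backslash\{9\}$ (six generic points with $7=15\cap 24$ and $8=16\cap 34$) is \emph{not} inscribed in a conic, since six generic points do not lie on a conic. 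So the three perturbed lines are generically not concurrent, the projection $\Gamma_M\to\Gamma_{M\backslash\{9\}}$ is not dominant, and the extension step fails. The original $\gamma$ does have the three lines concurrent at $\gamma_9$, but your perturbation of $\gamma|_{[8]}$ destroys this codimension-one condition, and preserving it is exactly the hard content of the theorem. The paper avoids this by deleting a \emph{hexagon} point $p\in[6]$ of degree two instead: then $M\backslash\{p\}$ is the matroid $N$ of Lemma~\ref{props}, whose circuit variety is decomposed there by a nontrivial degeneration argument, and the deleted point is recovered as the intersection of only \emph{two} lines, which always exists (this is the role of the hypothesis $\gamma_{l_1}\wedge\gamma_{l_2}\neq 0$ in the paper's Case~1; the complementary Case~2 is driven into $V_{U_{2,9}}$ after handling loops and coinciding points).

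Two further problems. First, you repeatedly identify $M\backslash\{j\}$ for a Pascal point $j\in\{7,8,9\}$ with the matroid $N$ of Figure~\ref{fig:pascal}(c); this is wrong. Deleting a degree-three point destroys three lines and leaves a nilpotent matroid with all degrees at most two (Figure~\ref{fig:pascal}(d)), for which $V_{\CCC(M\backslash\{j\})}=V_{M\backslash\{j\}}$ follows directly from Theorem~\ref{nil coincide}(i) — no appeal to Lemma~\ref{props} is needed or valid there. The matroid $N$ is $M\backslash\{1\}$, the deletion of a \emph{hexagon} point, and it retains the degree-three point $9$. Second, in the branch $\gamma_j=0$ for $j\in[6]$ you conclude $\gamma\in V_{M(j)}$ from Lemma~\ref{lemma: third config loop matroid variety}, but that lemma only gives the inclusion $V_{M(j)}\subseteq V_M$; from $\gamma\in V_{\CCC(M)}$ and $\gamma_j=0$ you only get $\gamma\in V_{\CCC(M(j))}$, and passing from the circuit variety of $M(j)$ to its matroid variety again requires Lemma~\ref{props} (since $M\backslash\{j\}\cong N$ here) together with a case split on whether $\gamma_9=0$.
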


\begin{proof}
The inclusion $\supseteq$ is clear, since the matroids on the right-hand side contain more dependencies than $M$. To establish the reverse inclusion, we must show that any $\gamma\in V_{\mathcal{C}(M)}$ lies in the variety on the right-hand side of~\eqref{ecuacion}. 
First, suppose that there exists $i \in \{7,8,9\}$ such that $\gamma_i = 0$. Without loss of generality, assume $i=7$. Since the matroid $M\backslash \{7\}$, depicted in Figure~\ref{fig:pascal} (d), is nilpotent without points of degree at least three, it follows that $V_{\mathcal{C}(M(7))} = V_{M(7)}$. Consequently, we have $\gamma \in V_{M(7)}$, as desired. Hence, from this point onward, we can assume that $\gamma_{7},\gamma_{8},\gamma_{9}\neq 0$.

\medskip
{\bf Case~1.}~Suppose there exists $p\in [6]$ such that $\gamma_{l_1} \wedge \gamma_{l_2} \neq 0$, where $\{l_1, l_2\} =\mathcal{L}_p$. Without loss of generality, assume $p=1$.
Since $M\backslash \{1\}$ is the matroid from Lemma~\ref{props}, applying this result yields $\restr{\gamma}{\{2,\ldots,9\}}\in V_{M\backslash \{1\}}$. Consequently, we can perturb $\{\gamma_{2},\ldots,\gamma_{9}\}$ to obtain a collection $\{\widetilde{\gamma}_{2},\ldots,\widetilde{\gamma}_{9}\}$ in $\Gamma_{M\backslash \{1\}}$. Extending $\widetilde{\gamma}$ by defining 
$\widetilde{\gamma}_1 = \widetilde{\gamma}_5 \widetilde{\gamma}_7 \wedge \widetilde{\gamma}_6 \widetilde{\gamma}_8$,
we obtain $\widetilde{\gamma}\in V_{M}$, which implies~$\gamma\in V_{M}$.

\medskip
{\bf Case~2.} Suppose that for all points $p \in [6]$, it holds that $\gamma_{l_1} \wedge \gamma_{l_2} = 0$, where $\{l_1, l_2\} =\mathcal{L}_p$. 

\medskip
{\bf Case~2.1.} Suppose there exists $p \in [6]$ such that $\gamma_{p} = 0$. Since this point lies on exactly two lines in $M$, we can redefine $\gamma_{p}$ as a nonzero vector in the intersection $\gamma_{l_{1}} \cap \gamma_{l_{2}}$ of the two lines it belongs to, where $\mathcal{L}_{p} = \{l_{1}, l_{2}\}$. Thus, we may henceforth assume that $\gamma$ contains no loops.

\medskip
{\bf Case~2.2.} Suppose there exists $i\in [6]$ and $j\in [9]$ with $i\neq j$ such that $\gamma_{i}=\gamma_{j}$. Without loss of generality, assume $i=1$. 
First suppose that $\rank \{\gamma_{1},\gamma_{6},\gamma_{8}\}=2$ or $\rank\{\gamma_{1},\gamma_{5},\gamma_{7}\}=2$. By assumption, this implies that the corresponding two lines coincide. Consequently, we can perturb $\gamma_{1}$ to any other vector $\widetilde{\gamma}_{1}$ on this line, ensuring that $\widetilde{\gamma}_{1}\neq \gamma_{j}$, while still remaining in $V_{\mathcal{C}(M)}$.
Now suppose that $\rank \{\gamma_{1},\gamma_{6},\gamma_{8}\}=1$ and $\rank\{\gamma_{1},\gamma_{5},\gamma_{7}\}=1$. In this case, we can perturb $\gamma_{1}$ to any other vector $\widetilde{\gamma}_{1}$, ensuring that $\widetilde{\gamma}_{1}\neq \gamma_{j}$, while still remaining in $V_{\mathcal{C}(M)}$. Thus, we may assume henceforth that there do not exist $i\in [6]$ and $j\in [9]$ with $i\neq j$ such that $\gamma_{i}=\gamma_{j}$.

\medskip
{\bf Case~2.3.} Suppose that Cases~2.1 and~2.2 do not apply. Observe that for every line $l$ of $M$ distinct from $\{7,8,9\}$, the set $\{\gamma_{p}:p\in l\}$ contains three different points and no loops, implying that $\rank(\gamma_{l})=2$. By our assumption in Case~2, these six lines must coincide, that is,
\[\gamma_{\{1,6,8\}} = \gamma_{\{1,5,7\}} = \gamma_{\{2,6,9\}} = \gamma_{\{2,7,4\}} = \gamma_{\{3,4,8\}}=\gamma_{\{3,5,9\}}.\]
It follows that the vectors $\{\gamma_{1},\ldots,\gamma_{9}\}$ all lie in a common two-dimensional subspace. Consequently, we conclude that $\gamma\in V_{\mathcal{C}(U_{2,9})}=V_{U_{2,9}}$.

\medskip

The irreducibility of the varieties in~\eqref{ecuacion} follows from Theorem~\ref{nil coincide}(ii). Moreover, one can easily prove that there are no redundant components in this decomposition. This establishes that the decomposition is indeed the irredundant irreducible decomposition, thereby completing the proof.
    \end{proof}

\begin{figure}
    \centering
    \includegraphics[width=0.9\linewidth]{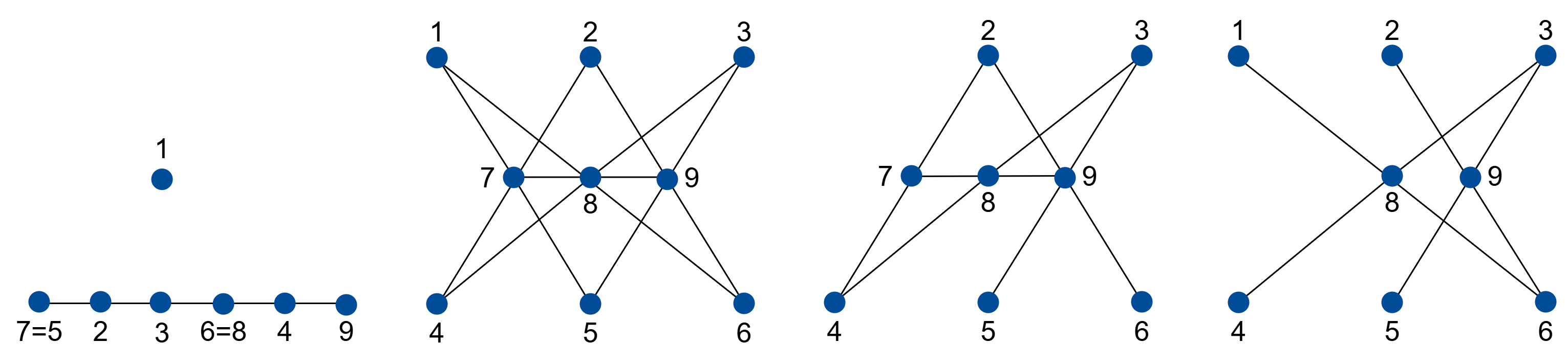}
    \caption{Form left to right: (a) $\pi_M^1$; (b) Pascal configuration ; (c) Pascal configuration without the point 1 ;(d) Pascal configuration without the point 7.}
    \label{fig:pascal}
\end{figure}

\subsubsection{Matroid ideal of Pascal configuration}

We will now provide a generating set for the matroid ideal of the Pascal configuration, up to radical.

\begin{theorem}\label{generators pascal}
    Let $M$ be the matroid associated to the Pascal configuration. Then $$I_{M}=\sqrt{I_{\mathcal{C}(M)}+G_{M}+I_{M}^{\textup{lift}}}.$$
\end{theorem}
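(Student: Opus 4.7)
The plan is to apply Strategy~\ref{strategy}. The inclusion $\sqrt{I_{\mathcal{C}(M)} + G_M + I_M^{\textup{lift}}} \subseteq I_M$ is immediate, since each of the three summands is contained in $I_M$ (by definition of the circuit ideal, by Proposition~\ref{inc G}, and by Theorem~\ref{thm: gamma i V_M^lift => liftable}, respectively), and $I_M$ is radical. For the reverse inclusion, it suffices to prove the varietal containment
\[
V_{\mathcal{C}(M)} \cap V(G_M) \cap V(I_M^{\textup{lift}}) \subseteq V_M.
\]
I fix an arbitrary $\gamma$ in the left-hand side and aim to produce arbitrarily small perturbations $\widetilde{\gamma} \in \Gamma_M$; the conclusion $\gamma \in \closure{\Gamma_M}^{\textup{Euclidean}} = V_M$ then follows from the coincidence of the Euclidean and Zariski closures of $\Gamma_M$.

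By the irreducible decomposition in Theorem~\ref{proposition: decomposition pascal configuration}, $\gamma$ lies in one of $V_M$, $V_{U_{2,9}}$, or $V_{M(i)}$ for some $i \in \{7,8,9\}$. The component $V_M$ is trivial. For $\gamma \in V_{M(i)}$, by the symmetry of $M$ I may assume $i = 9$, so $\gamma_9 = 0$ and $\restr{\gamma}{[8]} \in V_{\mathcal{C}(M \setminus \{9\})}$. The submatroid $M \setminus \{9\}$ is nilpotent with every point-degree at most two, so by Theorem~\ref{nil coincide}(i) we have $\restr{\gamma}{[8]} \in V_{M \setminus \{9\}}$. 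The condition $\gamma \in V(G_M)$ forces (via Proposition~\ref{inc G}) the three lines $\gamma_{\{2,6\}}$, $\gamma_{\{3,5\}}$, $\gamma_{\{7,8\}}$ through the loop point $9$ to be concurrent in $\mathbb{P}^2$. Let $v \in \CC^3$ span their common direction; I set $\widetilde{\gamma}_9 = \epsilon v$ for small $\epsilon$ while perturbing $\restr{\gamma}{[8]}$ inside $V_{M \setminus \{9\}}$ so as to preserve this concurrency. The existence of such a concurrency-preserving perturbation relies on the irreducibility of $V_M$, guaranteed by Theorem~\ref{nil coincide}(ii) since Pascal is solvable ($Q_M = \{7,8,9\}$, and the only line of $M|Q_M$ is $\{7,8,9\}$, so $Q_{M|Q_M} = \emptyset$).

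The principal case is $\gamma \in V_{U_{2,9}}$, where all nine vectors lie on a single line $l$ and no perturbation internal to $l$ can raise the rank. Here the lifting hypothesis is essential: I choose $q \in \CC^3$ in general position with respect to $\gamma$ and invoke Theorem~\ref{thm: gamma i V_M^lift => liftable}, which produces an arbitrarily small non-degenerate lifting $\widetilde{\gamma} \in V_{\mathcal{C}(M)}$. To ensure the lifting lies in $V_M$ rather than merely in $V_{\mathcal{C}(M)}$, I apply Proposition~\ref{prop: nilpotent add point liftable} inductively, adding the three degree-three points $7,8,9$ one at a time; at each stage I must check nilpotency of the intermediate submatroid and verify the dimension bound $\dim(M') \geq 1 + \deg(d)$ via Theorem~\ref{qgamma is constant}. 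Once the inductive lifting is constructed, $\widetilde{\gamma}$ has full rank and realizes all concurrencies of $M$, hence by the decomposition it belongs to $V_M \cup \bigcup_{i \in \{7,8,9\}} V_{M(i)}$. If $\widetilde{\gamma} \in V_M$ we are done; otherwise we apply the preceding paragraph to reduce further to $V_M$.

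The technical heart of the argument consists of two finite combinatorial checks on the Pascal matroid: first, computing $\dim(M \setminus \{d\})$ for $d \in \{7,8,9\}$ (and for the further submatroids obtained by removing degree-three points sequentially) to confirm that the dimension bound in Proposition~\ref{prop: nilpotent add point liftable} holds at each inductive step; and second, verifying that the concurrency-preserving perturbation in the $V_{M(i)}$ case can be carried out inside the image of $V_M$ under projection to the $[8]$-coordinates, which again reduces to a dimension count using the irreducibility of $V_M$. Both checks are routine but must be performed explicitly for the proof to go through.
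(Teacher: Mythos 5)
Your overall architecture (easy inclusion, reduction to the varietal containment, case split via the decomposition of $V_{\mathcal{C}(M)}$, lifting for the degenerate component) matches the paper's, but there are two genuine gaps. The most serious is the rank-one case: a configuration in which all nine vectors coincide at a single point of $\mathbb{P}^2$ lies in $V_{\mathcal{C}(M)}\cap V(G_M)\cap V(I_M^{\textup{lift}})$ (every bracket and every entry of every liftability matrix vanishes), yet it cannot be handled by lifting at all --- any lifting of a rank-one collection from a single vector $q$ stays inside the two-dimensional span of the common direction and $q$, so it can never be non-degenerate. Your treatment of $V_{U_{2,9}}$ implicitly assumes the nine points are genuinely spread along a line; the coincident configuration needs a completely separate argument, which the paper supplies by exhibiting an explicit one-parameter family in $\Gamma_M$ (via the parametrization of \cite{Connectednessandcombinatorialinterplayinthemodulispaceoflinearrangements}) degenerating to it. Without this your proof does not cover all of $V(I)$.

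The second issue is that you deploy Proposition~\ref{prop: nilpotent add point liftable} in the wrong place. When $\operatorname{rk}(\gamma)=2$ and $\gamma_7,\gamma_8,\gamma_9\neq 0$, no induction over the degree-three points is needed: Theorem~\ref{thm: gamma i V_M^lift => liftable} directly yields an arbitrarily small rank-three lifting $\widetilde{\gamma}\in V_{\mathcal{C}(M)}$ with $\widetilde{\gamma}_7,\widetilde{\gamma}_8,\widetilde{\gamma}_9\neq 0$, and the decomposition then forces $\widetilde{\gamma}\in V_M$. Where Proposition~\ref{prop: nilpotent add point liftable} is genuinely needed is the case you do not isolate: $\operatorname{rk}(\gamma)=2$ with one of $\gamma_7,\gamma_8,\gamma_9$ a loop (such $\gamma$ sits in $V_{U_{2,9}}\cap V_{M(i)}$, which your component-by-component split obscures). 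There one must first \emph{choose} a new position for the loop on the line $l$ so that the completed collection becomes liftable, and this is exactly what the proposition provides, using the computation $\dim_q^\gamma(M\setminus\{7\})=4\geq 1+\deg(7)$ from Theorem~\ref{qgamma is constant}. Note also that both Theorem~\ref{qgamma is constant} and Proposition~\ref{prop: nilpotent add point liftable} carry a ``no coinciding points'' hypothesis, so your plan to run them in the generic $U_{2,9}$ case would in any event require a preliminary perturbation separating the points, which you do not perform. Finally, your handling of the full-rank $V_{M(i)}$ case is more elaborate than necessary: since $\gamma\in V(G_M)$, Proposition~\ref{inc G} lets you simply redefine the loop as the common point of the concurrent lines through it, landing in $V_{\mathcal{C}(M)}$ with full rank and $\gamma_7,\gamma_8,\gamma_9\neq 0$, whence $\gamma\in V_M$ by the decomposition --- no appeal to irreducibility or to a concurrency-preserving perturbation of the other eight points is required.
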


\begin{proof}
To prove the equality of the statement, we will show the following equivalent equality of varieties
\begin{equation}\label{lifti}
V_M = V_{\mathcal{C}(M)} \cap V(G_M) \cap V_{M}^{\textup{lift}}.
\end{equation}

The inclusion $\subset$ in~\eqref{lifti} follows directly from Theorem \ref{thm: gamma i V_M^lift => liftable} and Proposition \ref{inc G}. To establish the reverse inclusion, let  $\gamma \in V(G_M) \cap V(I_{M}^{\textup{lift}}) \cap \VCM$. 
Since $\gamma\in V_{\mathcal{C}(M)}$,  by Theorem~\ref{proposition: decomposition pascal configuration} we have
\begin{equation}\label{gamma in}
\gamma\in V_M \cup V_{U_{2,9}} \cup_{i=7}^9 V_{M(i)}.
\end{equation}

\medskip
{\bf Case~1.} Suppose that $\rk(\gamma)=3$.

\medskip
{\bf Case~1.1.} Suppose that $\gamma_{7},\gamma_{8},\gamma_{9}\neq 0$.  Since $\rank(\gamma)=3$, we deduce that $\gamma\not \in V_{U_{2,9}}$. Additionally, because $\gamma_{7},\gamma_{8},\gamma_{9}\neq 0$, we have $\gamma\not \in \cup_{i=7}^9 V_{M(i)}$. Thus, from \eqref{gamma in}, it follows that $\gamma\in V_{M}$, as desired. 

\medskip
{\bf Case~1.2} Suppose there exists a loop $\gamma_i$ for some $i \in \{7,8,9\}$. If there are at least two distinct lines $l \in \mathcal{L}_i$ with $\rk(\gamma_l) = 2$, then, since $\gamma \in V(G_M)$, Proposition~\ref{inc G} ensures that these lines intersect nontrivially. In this case, we redefine $\gamma_i$ as the intersection of these lines. 
After applying this procedure to all such loops, we may assume that each remaining loop has at most one line $l \in \mathcal{L}_i$ with $\rk(l) = 2$.

Now perturb $\gamma_i$ to a nonzero vector in $\gamma_{l}$ for some $l \in \mathcal{L}_i$ with $\rk(l) = 2$. If no such line exists, we redefine $\gamma_i$ arbitrarily. The resulting configuration remains in $\VCM$, since any line passing through $\gamma_i$ with rank one will still have rank at most two after this perturbation. Repeating this process for all points in $\{7,8,9\}$, we redefine $\gamma$ such that $\gamma_{7},\gamma_{8},\gamma_{9}\neq 0$ while ensuring that the configuration remains in $\VCM$. At this stage, we fall into Case~1.1, from which it follows that $\gamma\in V_{M}$, as desired. 

\medskip
{\bf Case~2.} Suppose that $\rk(\gamma) =2$. In this case, all vectors of $\gamma$ lie on a single line in the projective plane, which we denote by $l$. 

\medskip
{\bf Case~2.1.} Suppose that $\gamma_{7}, \gamma_8, \gamma_9 \neq 0$. Since $\gamma \in V(I_{M}^{\textup{lift}})$, we can use Theorem \ref{thm: gamma i V_M^lift => liftable} to lift $\gamma$ to $\widetilde{\gamma} \in \VCM$, such that $\rk(\widetilde{\gamma}) = 3$ and $ \widetilde{\gamma}_7, \widetilde{\gamma}_8, \widetilde{\gamma}_9 \neq 0$. By Case~1.1, this ensures that $\widetilde{\gamma} \in V_M$, and consequently $\gamma\in V_{M}$, as desired.

\medskip
{\bf Case~2.2.} Suppose that at least one of $\gamma_7, \gamma_8, \gamma_9$ is a loop. If multiple elements in $\{\gamma_{7},\gamma_{8},\gamma_{9}\}$ are loops, redefine all  but one of them as arbitrary points on $l$. Consequently, we may assume without loss of generality that the only loop in $\{\gamma_{7},\gamma_{8},\gamma_{9}\}$ is $\gamma_{7}$. 
Let $q\in \mathbb{P}^{2}$ be any point outside $l$. By Theorem~\ref{qgamma is constant}, we obtain
$\dim_q^\gamma(M \backslash \{7\}) = \dim_q^\gamma(S_M) = 4$. 
Applying Proposition~\ref{prop: nilpotent add point liftable}, we can redefine $\gamma_7$ as a point on $l$ such that the resulting configuration is liftable. Thus, we can perturb $\gamma$ to $\widetilde{\gamma} \in V_M$.

\medskip
{\bf Case~3.} Suppose that $\rk(\gamma) = 1$. In this case, all points either coincide at a fixed point or are loops. If at least one point is a loop, perturb the remaining points to reduce to Case~2.2. If there are no loops, then all points coincide. We will use the same technique as in \cite{liwski2025minimal}. By applying a projective transformation, we may assume that this common point is $(1,0,0)$.

Following the constructive approach outlined in the proof of \textup{\cite[Theorem~4.5]{Connectednessandcombinatorialinterplayinthemodulispaceoflinearrangements}}, we find that any element of $\Gamma_M$ can be expressed as $$\begin{pmatrix}
           1 & 1 & 1 & 1 & 1+x & 1+y & 1+y+z & -(1+z)(x+1)+1+y+z & 2x+1 \\
           0 & \epsilon & 0 & \epsilon & \epsilon & \epsilon & \epsilon + \epsilon z & 0 & (1+z) \epsilon \\
           0 & 0 & \eta & \eta & 0 & \eta & \eta & \eta & \eta x
       \end{pmatrix}.$$
       Letting $x,y,\eta, \epsilon, z \to 0$, we see that the configuration in which all points coincide can be realized as a limit of configurations in $\Gamma_M$. It follows that $\gamma \in V_M$.
\end{proof}

Although Theorem~\ref{generators pascal} establishes that the ideals $I_{\CCC(M)}, G_M$ and $I_M^{\textup{lift}}$ generate $I_{M}$ up to radical, it does not present an explicit generating set. The following remark addresses this point.
\begin{remark} \label{remark pascal}
By Theorem~\ref{generators pascal}, to find explicit generators for $I_M$, up to radical, it suffices to find explicit generators for $I_{\CCC(M)}, G_M$ and $I_M^{\textup{lift}}$.
\begin{itemize}[noitemsep]
\item The circuit polynomials are precisely the bracket polynomials corresponding to the circuits of $M$, see Remark~\ref{generating set ICM}. In this case, they are:
$\bigl\{[168],[157],[247],[269],[348],[359],[789]\bigr\}.$
\item Regarding the Grassmann–Cayley polynomials in $G_M$, an inspection of the proof of Theorem~\ref{generators pascal} gives that it suffices to consider the following polynomials:
\begin{itemize}
\item[{\rm (i)}] 
$[153][142][546][326]-[154][132][536][426]$
arising from $(15 \wedge 24) \vee (16 \wedge 34) \vee (35 \wedge 26)=0$.
\item[{\rm (ii)}] 
$[526][361][734]-[326][361][754]+[326][461][753]$
corresponding to $7\vee (53\wedge 26)\vee (34\wedge 61)=0$,
along with the two analogous polynomials in $G_M$ obtained from the expressions
\[8\vee (51\wedge 24)\vee (35\wedge 62)=0,\qquad \text{and} \qquad 9\vee (43\wedge 16)\vee (24\wedge 51)=0.\]
\item[{\rm (iii)}] 
$[749][361]-[461][739]\in G_M$
corresponding to the expression
$7\vee 9 \vee (34\wedge 61)=0$,
together with its two analogues derived from
\[7\vee 8 \vee (35\wedge 62)=0,\qquad \text{and} \qquad 9\vee 8 \vee (15\wedge 42)=0.\]
\end{itemize}
These are precisely the seven Grassmann–Cayley polynomials identified in \textup{\cite[Theorem~3.0.2]{Sidman}}. We refer the reader to that work for the details.
\item Looking into the proof of Theorem~\ref{generators pascal}, we note that we only use the assumption $\gamma \in V(I_M^{\textup{lift}})$ to ensure the existence of a non-degenerate lifting of $\{\gamma_1,\ldots,\gamma_9\}$ to a collection in $V_{\CCC(M)}$. Thus, by~\cite[Lemma 3.6]{liwski2024pavingmatroidsdefiningequations}, it suffices to consider the $7\times 7$ minors of the liftability matrices $\mathcal{M}_{q}(M)$.

Although the set of all such minors is not finite, since $q$ ranges over $\CC^{3}$, the discussion in \cite[\S6]{liwski2024pavingmatroidsdefiningequations} shows that one obtains a finite generating set by selecting vectors $q_1, \ldots q_9\in \CC^{3}$ and replacing the vector $q$ appearing in the brackets of column $i$ with $q_i$. This also gives us a polynomial inside $I_M$. By multilinearity of the determinant, it is enough to take each $q_i$ from the canonical basis $\{e_1, e_2, e_3\}$ of $\CC^3$. The resulting polynomials are the $7\times 7$ minors of the matrices 

 $$\begin{pmatrix}
       [68q_1] & 0 & 0 & 0 & 0 & -[18q_6] & 0 & [16q_8] & 0 \\
       [57q_1] & 0 & 0 & 0 & -[17q_5] & 0 & [15q_7] & 0 & 0 \\
       0 & [47q_2] & 0 & -[27q_4] & 0 & 0 & [24q_7] & 0 & 0 \\
       0 & [69q_2] & 0 & 0 & 0 & -[29q_6] & 0 & 0 & [26q_9] \\
       0 & 0 & [48q_3] & -[38q_4] & 0 & 0 & 0 & [34q_8] & 0 \\
       0 & 0 & [59q_3] & 0 & -[39q_5] & 0 & 0 & 0& [35q_9] \\
       0 & 0 & 0 & 0 & 0 & 0 & [89q_7] & -[79q_8] & [78q_9] \\
   \end{pmatrix},$$
where $q_1, \ldots, q_9\in \{e_1, e_2, e_3\}$. The total number of such polynomials is $\textstyle \binom{9}{7}3^{9}=708{,}588$.
\end{itemize}
\end{remark}

\subsection{Pappus configuration}
We now study the Pappus configuration, the simple rank-three matroid in Figure~\ref{fig:Pappus, Pappus, I_i, J_i} (Left). Specifically, we find a finite generating set for its matroid ideal.
In this subsection, we denote this matroid by $M$.

\medskip

We first state the following result from \cite[\S5.4]{liwski2025minimal}.

\begin{theorem}\label{deco pappus}
   The circuit variety of $M$ admits the following irreducible decomposition
\begin{equation}\label{decomposition of pappus} \VCM = V_{M} \cup V_{U_{2,9}} \cup_{i=1}^{18} V_{I_i} \cup_{i=1}^3 V_{J_i} \cup_{i=1}^9 V_{\pi_M^i}, \end{equation}
where the matroids in the decomposition are the following:

\begin{itemize}
\item $U_{2,9}$ denotes the uniform matroid of rank two on the ground set $[9]$, see Definition~\ref{uniform 3}.
\item We denote by $I_i$ for $i\in [18]$ the matroid obtained from $M$ by making one of its
points a loop and adding one of the circuits $\{1, 4, 9\}, \{2,5,8\}$ or $\{3,6,7\}$, see Figure~\ref{fig:Pappus, Pappus, I_i, J_i} (Center). 
\item We denote $J_{1},J_{2},J_{3}$ for the matroids derived from $M$ by making loops all three points of
one of the triples $\{1,4,9\}, \{3,6,7\}$ or $ \{2,5,8\}$, see Figure~\ref{fig:Pappus, Pappus, I_i, J_i} (Right).
\item The matroids $\pi_M^i$ are defined in Notation~\ref{notation pi}. See Figure~\ref{fig:Pappus, K_i, L__i, N} (Left) for an illustration.
\end{itemize}
\end{theorem}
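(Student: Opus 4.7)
The plan is to establish both inclusions in~\eqref{decomposition of pappus} and then verify irredundancy of the components. The inclusion $\supseteq$ is immediate: each matroid appearing on the right-hand side has a circuit set containing $\CCC(M)$, either by declaring points to be loops ($I_i$, $J_i$), forcing all points onto a common line ($U_{2,9}$), or imposing the identifications that define $\pi_M^i$. Consequently each of their matroid varieties is contained in $V_{\CCC(M)}$.

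For the reverse inclusion $\subseteq$, I would fix $\gamma \in V_{\CCC(M)}$ and stratify the analysis by rank, mirroring the template developed for Pascal in the proof of Theorem~\ref{proposition: decomposition pascal configuration}. If $\rank(\gamma)=3$ and $\gamma$ has no loops, a case-by-case perturbation argument, together with Lemma~\ref{lemma: third config loop matroid variety} for points of degree at most two, shows that $\gamma$ can be deformed into $\Gamma_M$, so $\gamma \in V_M$. If $\rank(\gamma)=3$ and at least one point is a loop, the restriction $M \setminus \{\text{loop}\}$ may force one or more of the three transversals $\{1,4,9\}, \{2,5,8\}, \{3,6,7\}$ to become collinear: if exactly one of them does, $\gamma$ lies in an appropriate $V_{I_i}$; if an entire transversal becomes trivial because all three of its points are loops, $\gamma$ lies in the corresponding $V_{J_i}$. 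If $\rank(\gamma) \leq 2$, all vectors lie on a common projective line, and one routes $\gamma$ either to $V_{U_{2,9}}$ or, when certain coincidence patterns among the points are forced, to some $V_{\pi_M^i}$, the latter capturing exactly the identifications induced when one tries to promote a single point off the common line.

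The main obstacle is the careful bookkeeping in the loop case: Pappus has a much richer combinatorial structure than Pascal, and turning a single point into a loop can independently force zero, one, two, or three of the transversals to become dependent. A systematic enumeration of which combinations of forced collinearities can occur, and a matching argument showing that each such scenario is precisely captured by one of the $I_i$ or $J_i$, is what produces the counts $18$ and $3$ (loops outside a transversal, paired with that transversal, give $6 \times 3 = 18$; loops filling out an entire transversal give the $3$ configurations $J_i$). Once the set-theoretic equality is established, irredundancy is verified by producing, for each component, an explicit realization lying in that component but in no other; irreducibility of each component follows from Theorem~\ref{nil coincide}(ii) after reducing to a solvable submatroid for $U_{2,9}$, the $\pi_M^i$, the $I_i$, and the $J_i$, and from the classical irreducibility of the Pappus realization space for $V_M$ itself.
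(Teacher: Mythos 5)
First, note that the paper does not actually prove Theorem~\ref{deco pappus}: it is imported verbatim from \cite[\S 5.4]{liwski2025minimal}, so there is no in-paper argument to compare yours against in detail. Judged on its own terms, your sketch follows the right template (the one used for Pascal in Theorem~\ref{proposition: decomposition pascal configuration}), the inclusion $\supseteq$ is handled correctly, and your counts $18=6\times 3$ and $3$ for the $I_i$ and $J_i$ are consistent with the statement. However, the one substantive reduction you commit to is false, and it fails precisely at the point where Pappus differs from Pascal.

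You claim that if $\rank(\gamma)=3$ and $\gamma$ has no loops then $\gamma$ can be perturbed into $\Gamma_M$, and accordingly you place the components $V_{\pi_M^i}$ entirely in the stratum $\rank(\gamma)\le 2$. This cannot be right: a generic element of $\Gamma_{\pi_M^1}$ has rank three (the point $1$ lies off the common line spanned by $2,\dots,9$) and no loops, it lies in $V_{\CCC(M)}$ (the identifications $2=3$, $6=8$, $5=7$ render every circuit of $M$ dependent), and it is \emph{not} in $V_M$ --- otherwise $V_{\pi_M^1}$ would be a redundant component of the decomposition. The paper's own use of the theorem confirms this: in Case~1 of the proof of Theorem~\ref{thm:Pappus}, a loop-free $\gamma$ is explicitly allowed to land in $V_{\pi_M^i}$ or $V_{U_{2,9}}$ rather than $V_M$. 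The Pascal argument does not transfer because in Pascal the points $1,\dots,6$ have degree two, so Lemma~\ref{lemma: third config loop matroid variety} and the ``move the point to the intersection of its two lines'' device apply; in Pappus every point has degree three, and the concurrency of the three lines through a point is a genuine extra condition whose failure is exactly what produces the $\pi_M^i$ components. Beyond this error, the parts you yourself flag as ``the main obstacle'' --- showing that the loop strata contribute exactly the $I_i$ and $J_i$ and nothing else, and that each $V_{I_i}$ is genuinely not contained in $V_M$ (note that $V_{M(p)}\subseteq V_M$ for a single loop $p$, as cited in Case~2 of the proof of Theorem~\ref{thm:Pappus}, so separating $V_{I_i}$ from $V_M$ requires an actual incidence-geometric obstruction, not bookkeeping) --- constitute the entire content of the theorem and are left undone. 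As written, the proposal is an outline with one incorrect case reduction and the essential work deferred.
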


To determine the defining equations of $M$, we use the following proposition, which decomposes the circuit variety of the matroids $M(i)$ obtained from $M$ by making one point a loop. Since all cases are analogous, we focus on $i=9$ in the proposition below.  

\begin{proposition} \label{prop: Pappus M(9)}
The circuit variety of  $M(9)$ from Figure \ref{fig:Pappus, K_i, L__i, N} (Center) satisfies:
    \begin{equation} \label{eq: VCM(9) final decomposition}
V_{\CCC(M(9))} = V_{M(9)} \cup V_{U_{2,9}(9)} \cup_{j=1,4} V_{M(9,j)} \cup V_{M(1,4,9)}.
\end{equation}
\end{proposition}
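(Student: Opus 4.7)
The inclusion $\supseteq$ is immediate, since each matroid appearing on the right-hand side has dependencies containing those of $M(9)$, so its matroid variety is contained in $V_{\CCC(M(9))}$. For the reverse inclusion, the strategy mirrors that of Theorem~\ref{proposition: decomposition pascal configuration}: fix $\gamma \in V_{\CCC(M(9))}$, so $\gamma_9 = 0$, and case-split on $\rk(\gamma)$ and on which of $\gamma_1, \gamma_4$ vanish.

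If $\rk(\gamma) \leq 2$, all vectors lie on a common 2-dimensional subspace, so $\gamma \in V_{\CCC(U_{2,9}(9))}$. Since $U_{2,9}(9)$ is nilpotent with every non-loop point of degree $1$, Theorem~\ref{nil coincide}(i) yields $V_{\CCC(U_{2,9}(9))} = V_{U_{2,9}(9)}$, placing $\gamma$ in this component.

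Now assume $\rk(\gamma) = 3$. If $\gamma_1 = 0$, then $\gamma \in V_{\CCC(M(9,1))}$. A direct inspection shows that the active size-three circuits of $M(9,1)$ are $\{4,5,6\}$, $\{2,6,7\}$, $\{3,4,8\}$, $\{3,5,7\}$; every point has degree at most $2$, and the nilpotent chain terminates after two steps (restricting to $\{3,4,5,6,7\}$ and then to $\{5\}$). Hence Theorem~\ref{nil coincide}(i) gives $V_{\CCC(M(9,1))} = V_{M(9,1)}$, so $\gamma \in V_{M(9,1)}$. The symmetric analysis handles $\gamma_4 = 0$, and if both $\gamma_1 = \gamma_4 = 0$, the same argument applied to $M(1,4,9)$ (whose only nontrivial circuits are $\{2,6,7\}$ and $\{3,5,7\}$) shows $\gamma \in V_{M(1,4,9)}$.

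The remaining and main case is $\rk(\gamma) = 3$ with $\gamma_1, \gamma_4 \neq 0$, where we must show $\gamma \in V_{M(9)}$, equivalently $\gamma|_{[8]} \in V_{M \setminus \{9\}}$. Here $M \setminus \{9\}$ is \emph{not} nilpotent (points $3$ and $6$ both have degree $3$), so Theorem~\ref{nil coincide}(i) is unavailable. However, $M \setminus \{9\}$ is solvable, since $Q_{M \setminus \{9\}} = \{3,6\}$ is independent (no line of $M\setminus\{9\}$ contains both $3$ and $6$), so $V_{M \setminus \{9\}}$ is irreducible by Theorem~\ref{nil coincide}(ii). The plan is to perturb $\gamma|_{[8]}$ into $\Gamma_{M \setminus \{9\}}$: any loop at a degree-$\leq 2$ point among $\{\gamma_2, \gamma_5, \gamma_7, \gamma_8\}$ is removed via Lemma~\ref{lemma: third config loop matroid variety}, while the two degree-three points $\gamma_3, \gamma_6$ are relocated to the appropriate intersections of the lines through them, using the anchoring provided by $\gamma_1, \gamma_4 \neq 0$ together with the two main lines $\gamma_{\{1,2,3\}}$ and $\gamma_{\{4,5,6\}}$. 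This last step is the main obstacle, because with $3$ and $6$ both of degree $3$ one must verify that no circuit is violated along the perturbation and that the resulting configuration actually realizes $M\setminus\{9\}$ rather than some further degeneration already captured by a different component on the right-hand side.
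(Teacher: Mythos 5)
Your overall architecture (reduce to $M\setminus\{9\}$, split off the loci where $\gamma_1$ or $\gamma_4$ vanish, treat the rank-$\le 2$ locus as $V_{U_{2,9}(9)}$, and then handle the generic case) matches the paper's, but there are two genuine problems. First, your combinatorics of the Pappus configuration are wrong. The lines of $M$ are $\{1,2,3\},\{1,6,8\},\{1,5,7\},\{2,4,7\},\{2,6,9\},\{7,8,9\},\{4,5,6\},\{3,4,8\},\{3,5,9\}$, so the degree-three points of $M\setminus\{9\}$ are $1$ and $4$, not $3$ and $6$ (this is exactly why the statement singles out $j=1,4$), and the sets $\{2,6,7\}$, $\{3,5,7\}$ you list are not circuits. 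Consequently your treatment of the sub-case $\gamma_1=0$ fails: $M(9,1)=M\setminus\{1,9\}$ is three concurrent lines through the point $4$, which therefore has degree three, so Theorem~\ref{nil coincide}(i) does \emph{not} apply and the asserted equality $V_{\CCC(M(9,1))}=V_{M(9,1)}$ is false — the correct statement (which the paper imports as Lemma~5.5(iv) of \cite{liwski2025minimal}) is $V_{\CCC(M(9,1))}=V_{M(9,1)}\cup V_{M(1,4,9)}$, reflecting that $\gamma_4$ may also degenerate to a loop. You need an additional argument (or that cited lemma) to decide which of these two components contains $\gamma$.

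Second, the main case $\rk(\gamma)=3$, $\gamma_1,\gamma_4\neq 0$ is not proved: you describe a plan and explicitly flag its key step as ``the main obstacle.'' The paper closes this case with a concrete dichotomy that you should adopt. After removing loops at degree-two points via Lemma~\ref{lemma: third config loop matroid variety}, either there is some $p\in[8]\setminus\{1,4\}$ with $\gamma_{l_1}\wedge\gamma_{l_2}\neq 0$ for $\mathcal{L}_p=\{l_1,l_2\}$ — in which case one checks that $(M\setminus\{9\})\setminus\{p\}$ is nilpotent with all points of degree at most two, applies Theorem~\ref{nil coincide}(i) to perturb $\gamma|_{[8]\setminus\{p\}}$ into its realization space, and re-inserts $p$ at the intersection $\widetilde{\gamma}_{l_1}\wedge\widetilde{\gamma}_{l_2}$ — or else every degree-two point has coinciding (or degenerate) line pairs, and after eliminating coinciding points one finds that all six lines of $M\setminus\{9\}$ are equal, forcing $\gamma$ to be collinear and hence in $V_{U_{2,8}}$. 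Without this (or an equivalent) argument, the inclusion $\subseteq$ is not established.
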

\begin{proof}
The decomposition of 
$V_{\CCC(M(9))}$ corresponds to that of $V_{\CCC(M \backslash \{9\})}$, so it suffices to decompose $V_{\CCC(M \backslash \{9\})}$. For simplicity, denote $N := M \backslash \{9\}$. With this notation, it is enough to prove that:
\begin{equation}\label{ccn}
V_{\CCC(N)} = V_{N} \cup V_{U_{2,8}} \cup \bigcup_{j \in \{1,4\}} V_{N(j)} \cup V_{N(1,4)}.
\end{equation}
The inclusion $\supseteq$ in~\eqref{ccn} is immediate since each matroid variety on the right-hand side corresponds to matroids with strictly more dependencies than $N$. To prove the reverse inclusion, we will show that any collection $\gamma \in V_{\CCC(N)}$ must lie in one of the varieties appearing on the right-hand side of~\eqref{ccn}.

Suppose first that either $\gamma_1 = 0$ or $\gamma_4 = 0$, and without loss of generality, assume that $\gamma_1 = 0$. The matroid $M(1,9)$ is illustrated in Figure~\ref{fig:Pappus, K_i, L__i, N} (Right). By Lemma~5.5 (iv) of \cite{liwski2025minimal}, we have
$V_{\mathcal{C}(N(1))} = V_{N(1)} \cup V_{N(1,4)}$,
which implies
$\gamma \in V_{N(1)} \cup V_{N(1,4)}$.

Now assume that $\gamma_1,\gamma_4 \neq 0$. Then, by Lemma~\ref{lemma: third config loop matroid variety}, we may assume that $\gamma$ has no loops.

\medskip

\begin{mycases}
    \case{There exists $p \in [8] \backslash \{1,4\}$ such that $\gamma_{l_1} \wedge \gamma_{l_2} \neq 0$, where $\{l_1, l_2\} =\mathcal{L}_p$.}

\smallskip

In this case, since $N \backslash \{p\}$ is nilpotent and all its points have degree at most two, Theorem~\ref{nil coincide} (i) allows us to perturb the collection $\{\gamma_i : i \in [8] \backslash \{p\}\}$ to obtain vectors $\{\widetilde{\gamma}_i : i \in [8] \backslash \{p\}\} \in \Gamma|_{N \backslash \{p\}}$. We then extend $\widetilde{\gamma}$ by defining $\widetilde{\gamma}_p$ as $\widetilde{\gamma}_{l_1} \wedge \widetilde{\gamma}_{l_2}$, where $\mathcal{L}_p = \{l_1, l_2\}$, thereby obtaining a collection $\widetilde{\gamma} \in V_{N}$. Since $\widetilde{\gamma}$ is a perturbation of $\gamma$, it follows that $\gamma \in V_{N}$, as desired.

\medskip

\case{For all $p \in [8]\backslash \{1,4\}$ it holds that $\gamma_{l_1} \wedge \gamma_{l_2} = 0$, where $\{l_1, l_2\} =\mathcal{L}_p$.} 

\medskip

{\bf Case~2.1.} Suppose there exist distinct $i,j \in [8]$ such that either $i,j \in [8] \setminus \{1,4\}$ or $i \in [8] \setminus \{1,4\}$ and $j \in \{1,4\}$, for which $\gamma_i = \gamma_j$. Assume without loss of generality that $i=1$ and $j=2$. Since $\gamma_4 \gamma_7 \wedge \gamma_1 \gamma_3 = 0$, the pairs $\{\gamma_4, \gamma_7\}$ and $\{\gamma_1, \gamma_3\}$ cannot span distinct lines in $\mathbb{P}^2$. If both pairs span the same line, then $\gamma_2$ can be perturbed to any point on that line away from $\gamma_1$, yielding a collection that remains in $V_{\CCC(N)}$. If both pairs span a one-dimensional space, then $\gamma_2$ can be perturbed to an arbitrary point away from $\gamma_1$, producing a collection still in $V_{\CCC(N)}$. Repeating this argument as needed, we may assume without loss of generality that the collection in $V_{\CCC(N)}$ does not present this type of double points. We now analyze this case.

\medskip

{\bf Case~2.2.} Suppose that $\gamma_i \neq \gamma_j$ for all distinct $i,j \in [8] \setminus \{1,4\}$, as well as for all $i \in [8] \setminus \{1,4\}$ and $j \in \{1,4\}$. Under the assumption of Case~2, we have the following equalities:
\[
\gamma_{\{1,2,3\}} = \gamma_{\{2,7,4\}} = \gamma_{\{3,4,8\}} = \gamma_{\{1,6,8\}} = \gamma_{\{4,5,6\}} = \gamma_{\{1,5,7\}},
\]
which imply that the vectors $\{\gamma_1, \ldots, \gamma_8\}$ are all collinear in $\mathbb{P}^2$. Hence, $\gamma \in V_{U_{2,8}}$, as desired. 
\end{mycases}
\end{proof}

We now present the main result of this subsection, a generating set for the matroid ideal $I
_M$.

\begin{theorem}\label{thm:Pappus}
    Let $M$ be the matroid associated to the Pappus configuration. Then $$I_{M}=\sqrt{I_{\mathcal{C}(M)}+G_{M}+I_{M}^{\textup{lift}}}.$$
\end{theorem}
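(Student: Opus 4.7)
The plan is to follow Strategy~\ref{strategy} and mirror the proof of Theorem~\ref{generators pascal}, establishing the equivalent variety identity
\begin{equation*}
V_M = V_{\mathcal{C}(M)} \cap V(G_M) \cap V(I_M^{\textup{lift}}).
\end{equation*}
The inclusion $\subseteq$ is immediate from Proposition~\ref{inc G} and Theorem~\ref{thm: gamma i V_M^lift => liftable}. For the reverse inclusion, fix $\gamma$ in the right-hand side; by Theorem~\ref{deco pappus}, $\gamma$ lies in one of the components of~\eqref{decomposition of pappus}, and it suffices to produce an arbitrarily small perturbation of $\gamma$ in $\Gamma_M$ in each case.

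When $\gamma$ has rank three and no loops, it cannot lie in $V_{U_{2,9}}$, $V_{I_i}$, $V_{J_i}$, or any $V_{\pi_M^i}$, since each of these components enforces either low rank, the presence of a loop, or the rigid collinearity-with-identifications pattern of $\pi_M^i$; hence $\gamma \in V_M$ directly. When $\gamma$ has rank three with loops, it lies in $V_{I_i} \cup V_{J_i}$, and I would mimic Case~1.2 of the Pascal argument: Proposition~\ref{prop: Pappus M(9)} together with its symmetric analogues obtained by permuting the loop point through all nine positions, combined with the concurrency consequence of Proposition~\ref{inc G}, shows that at each loop $\gamma_p$ the rank-two lines in $\mathcal{L}_p$ must be concurrent, so $\gamma_p$ can be redefined as their common intersection or perturbed onto one of them when fewer such lines exist. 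Iterating over loops reduces to the loop-free rank-three case.

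When $\gamma$ has rank at most two and no loops, all its vectors lie on a common line, and the assumption $\gamma \in V(I_M^{\textup{lift}})$ together with Theorem~\ref{thm: gamma i V_M^lift => liftable} produces a non-degenerate lifting $\widetilde{\gamma} \in V_{\mathcal{C}(M)}$ of full rank arbitrarily close to $\gamma$, reducing to the previous case. If $\gamma$ additionally has loops, I would apply Proposition~\ref{prop: nilpotent add point liftable} at each loop $p$ to extend the lifting to $\gamma_p$ itself; the required dimension inequality $\dim(M \setminus \{p\}) \geq 1 + \deg(p)$ can be verified via Theorem~\ref{qgamma is constant} applied to $M \setminus \{p\}$ together with an ordering of its points satisfying~\eqref{max}, which becomes available once $p$ is removed. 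The rank-one case in which all vectors coincide is treated by an explicit parametrization adapted from Case~3 of the proof of Theorem~\ref{generators pascal}, using the constructive presentation of $\Gamma_M$ in~\textup{\cite[Theorem~4.5]{Connectednessandcombinatorialinterplayinthemodulispaceoflinearrangements}}.

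The main obstacle is the case $\gamma \in V_{\pi_M^i}$, which has no analogue in the Pascal argument. In $\pi_M^i$ all points other than $i$ are collinear with specific pairwise identifications forced by the definition in Notation~\ref{notation pi}, producing a highly degenerate rank-two or rank-three configuration. I would extend the explicit parametrization argument from the proof of Theorem~\ref{generators pascal}, again invoking~\textup{\cite[Theorem~4.5]{Connectednessandcombinatorialinterplayinthemodulispaceoflinearrangements}}, and send a suitable family of parameters to zero so that each $\pi_M^i$-configuration arises as an explicit limit of elements of $\Gamma_M$. Verifying that every identification pattern appearing in $\pi_M^i$ is attainable in this way, and that the Grassmann--Cayley relations --- which encode in particular the Pappus concurrency --- remain compatible with the limiting process, is the most delicate step of the argument.
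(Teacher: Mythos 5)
Your overall skeleton (Strategy~\ref{strategy}, the decomposition of Theorem~\ref{deco pappus}, lifting for the degenerate strata, Grassmann--Cayley concurrency for the loops) matches the paper's, but there is a genuine gap in your treatment of the $\pi_M^i$ stratum, which you yourself identify as the delicate step. First, your opening claim is wrong: a configuration in $V_{\pi_M^i}$ can perfectly well have rank three and no loops (eight identified collinear points plus one point off the line), so "rank three and no loops" does not by itself place $\gamma$ in $V_M$. Second, your proposed fix --- an explicit parametrization exhibiting every $\pi_M^i$-configuration as a limit of elements of $\Gamma_M$ --- would amount to proving $V_{\pi_M^i}\subseteq V_M$ unconditionally. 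That statement is false: the $V_{\pi_M^i}$ appear as irredundant irreducible components of $V_{\CCC(M)}$ in Theorem~\ref{deco pappus}, so they are not contained in $V_M$. Any correct argument here must use the hypothesis $\gamma\in V(I_M^{\textup{lift}})$. The paper does exactly this: since $M\setminus\{i\}$ is a full-rank submatroid, Theorem~\ref{thm: gamma i V_M^lift => liftable} lets one lift the eight collinear vectors from the exceptional point $\gamma_i$ to a rank-three collection in $V_{\CCC(M\setminus\{i\})}$; the perturbed configuration has no loops and every eight-element subset of rank three, so the decomposition forces it into $V_M$.

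Your loop cases are also underpowered in one specific place: after you redefine a loop $\gamma_p$ as the common intersection of the concurrent lines through it, you land in a configuration of $V_{M(j)}$ for some remaining loop $j$, and you still need the containment $V_{M(j)}\subseteq V_M$. This does \emph{not} follow from Lemma~\ref{lemma: third config loop matroid variety}, because every point of the Pappus configuration has degree three; the paper imports it from Lemma~5.5 of \cite{liwski2025minimal}, and correspondingly splits the multi-loop cases according to whether two loops lie on a common line (where that lemma applies directly) or not (where the $G_M$-concurrency and Proposition~\ref{prop: nilpotent add point liftable} arguments are needed). Without citing or proving these containments, the reduction "iterate over loops to reach the loop-free case" does not close.
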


\begin{proof}
To prove the equality of the statement, we will show the following equivalent equality of varieties
\begin{equation}\label{lifting}
V_M = V_{\mathcal{C}(M)} \cap V(G_M) \cap V_{M}^{\textup{lift}}.
\end{equation}
The inclusion $\subset$ in~\eqref{lifting} follows directly from Theorem~\ref{thm: gamma i V_M^lift => liftable} and Proposition~\ref{inc G}. 
To prove the reverse inclusion, let $\gamma\in V(G_{M})\cap V_{M}^{\text{lift}}\cap V_{\mathcal{C}(M)}$. To show that $\gamma\in V_{M}$, we consider the following cases:  

\begin{figure}
    \centering
    \includegraphics[width=0.6\linewidth]{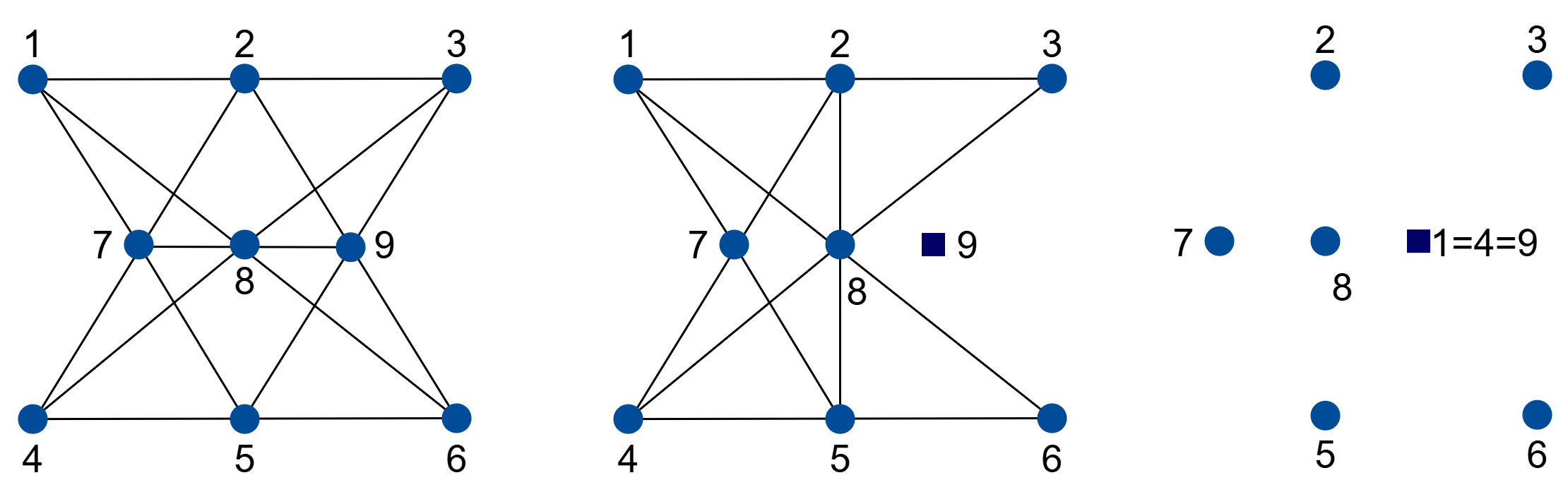}
    \caption{(Left) Pappus Configuration, (Center) $I_9$, (Right) $J_1$}
    \label{fig:Pappus, Pappus, I_i, J_i}
\end{figure}
\begin{figure}
    \centering
    \includegraphics[width=0.6\linewidth]{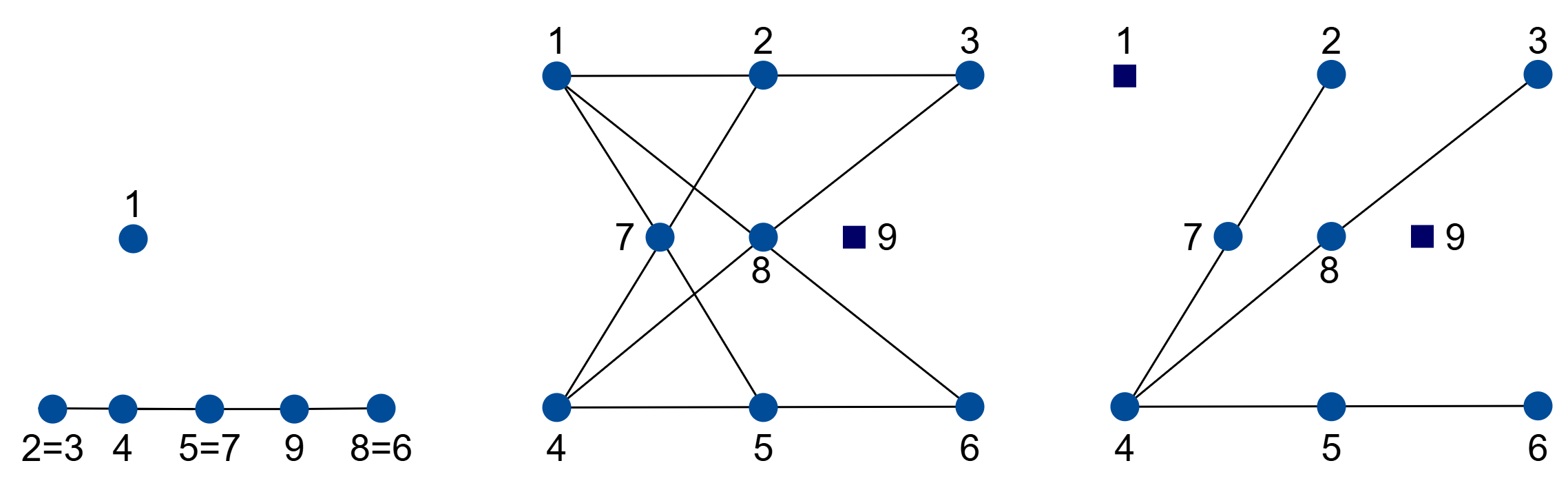}
    \caption{(Left) $\pi_M^i$; (Center) $M(9)$; (Right) $M(1,9)$.}
    \label{fig:Pappus, K_i, L__i, N}
\end{figure}

\medskip

\begin{mycases}
\case {\bf $\gamma$ has no loops.}

\medskip

In this case, Theorem~\ref{deco pappus} shows that  $\gamma \in V_{U_{2,9}}$ or $\gamma \in V_{\pi_M^i}$ for some $i\in [9]$ or $\gamma \in V_{M}$. Since we must prove that $\gamma \notin V_M$, it remains to consider the first two possibilities.

\medskip

\textbf{Case~1.1.} Suppose that $\gamma \in V_{\pi_M^i}$ for some $i \in [9]$. Then, all vectors of $\gamma$, except one, lie on a line in $\mathbb{P}^2$. Without loss of generality, assume this exceptional vector corresponds to the point $1$. Since $M \backslash \{1\}$ is a full-rank submatroid of $M$, Theorem~\ref{thm: gamma i V_M^lift => liftable} allows us to infinitesimally lift the vectors $\{\gamma_2, \ldots, \gamma_9\}$ from $\gamma_1$, to obtain vectors
$\{\widetilde{\gamma}_2, \ldots, \widetilde{\gamma}_9\} \in V_{\CCC(M \backslash \{1\})}$
with $\operatorname{rank}\{\gamma_2, \ldots, \gamma_9\} = 3$. Letting $\widetilde{\gamma}$ denote the resulting modified collection, we have $\widetilde{\gamma} \in \VCM$. Since $\widetilde{\gamma}$ has no loops and every subset of eight vectors has rank three, it follows from Theorem~\ref{deco pappus} that $\widetilde{\gamma} \in V_M$.

\medskip

{\bf Case~1.2}. Suppose that $\gamma\in V_{U_{2,9}}$. Since $\gamma \in \VMLIFT$, we can infinitesimally lift the configuration from an arbitrary point to obtain a non-degenerate configuration $\widetilde{\gamma} \in \VCM$. 
        If $\widetilde{\gamma} \in V_{M}$, the claim follows.
Otherwise, $\widetilde{\gamma} \in V_{\pi_M^i}$ for some $i\in [9]$. Without loss of generality, assume $i=1$. Since arbitrarily small perturbations do not create new dependencies, this forces the original configuration to satisfy $\gamma_{2}=\gamma_3,\gamma_5=\gamma_7,\gamma_6=\gamma_8$. By perturbing $\gamma_1$ slightly so that it no longer lies on the line spanned by $\{\gamma_2, \cdots, \gamma_9\}$, we reduce to the setting of Case~1.1, which implies that $\gamma\in V_{M}$.

\medskip

\case\textbf{$\gamma$ has exactly one loop.} 

\medskip 
Without loss of generality, assume that the loop corresponds to $9$. By Lemma 5.5 (iii) of \cite{liwski2025minimal}, we have $V_{M(9)} \subseteq V_M$, so it suffices to prove that $\gamma \in V_{M(9)}$. The matroid $M(9)$ is shown in Figure~\ref{fig:Pappus, K_i, L__i, N} (Center).
According to Equation~\eqref{eq: VCM(9) final decomposition}, either $\gamma \in V_{M(9)}$ or $\gamma \in V_{U_{2,9}(9)}$. If the former holds, then $\gamma \in V_M$, as desired. Otherwise, assume $\gamma \in V_{U_{2,9}(9)}$. Since $\gamma \in \VMLIFT$, we can infinitesimally lift the vectors $\{\gamma_1, \ldots, \gamma_8\}$ from an arbitrary vector $q$ outside their span, yielding a non-degenerate collection $\{\widetilde{\gamma}_1, \ldots, \widetilde{\gamma}_8\}$ of rank three in $V_{\CCC(M \backslash \{9\})}$. Denote by $\widetilde{\gamma}$ the perturbed collection of vectors, where $\widetilde{\gamma}_9 = 0$. Then $\widetilde{\gamma}$ does not have any loops apart from $\widetilde{\gamma}_9$ and is non-degenerate, so it must be in $V_{M(9)}$. Since $V_{M(9)} \subseteq V_M$ and $\widetilde{\gamma}$ is a perturbation of $\gamma$, we conclude that $\gamma \in V_M$.

\medskip

\case\textbf{$\gamma$ has exactly two loops.}

\medskip

{\bf Case~3.1.} Suppose the two loops correspond to elements in $[9]$ that do not lie on a common line in $M$. Without loss of generality, assume these loops correspond to $1$ and $9$. The matroid $M(1,9)$ is illustrated in Figure~\ref{fig:Pappus, K_i, L__i, N} (Right).

 \medskip
 
{\bf Case~3.1.1.} Assume that the vectors $\gamma_{2}, \ldots, \gamma_{8}$ are collinear. By applying a small perturbation within the line they span, we may further assume that they are pairwise distinct. By Theorem~\ref{qgamma is constant}, 
$\dim_q^\gamma(M \backslash \{1, 9\}) = 1 + 3 = 4$,
where $q \in \mathbb{C}^3$ is any vector in general position with respect to $\gamma$. Moreover, $M \backslash \{1, 9\}$ is nilpotent, the point $1$ has degree three in $M$, and all the $\gamma_i$ are pairwise distinct and nonzero for $i \in \{2, \ldots, 8\}$.
We may then apply Proposition~\ref{prop: nilpotent add point liftable} to redefine $\gamma_1$ along the line spanned by $\gamma_2, \ldots, \gamma_8$, so that $\gamma|_{[8]} \in V_{M \backslash \{9\}}^{\mathrm{lift}}$. This reduces to Case~2, from which we conclude that $\gamma \in V_M$.

\medskip
{\bf Case~3.1.2.} Suppose now that $\operatorname{rank}\{\gamma_{2},\ldots,\gamma_{8}\} = 3$.  
One can redefine $\gamma_{1}$ from zero to a point lying on all the lines $\gamma_l \subset \mathbb{P}^{2}$ for which $l$ is a line of $M$ containing $1$. Since $\gamma \in V(G_M)$, Proposition~\ref{inc G} ensures that these lines intersect nontrivially in $\mathbb{P}^{2}$. Then the collection of vectors remains within $V_{\CCC(M(9))}$. 
Let $\widetilde{\gamma}$ denote this modified collection. Given that $\widetilde{\gamma} \in V_{\CCC(M(9))}$, has rank three, and contains only one loop, it follows from Equation~\eqref{eq: VCM(9) final decomposition} that $\widetilde{\gamma} \in V_{M(9)}$. Since $V_{M(9)} \subset V_{M}$, we deduce that $\widetilde{\gamma} \in V_{M}$. Because $\widetilde{\gamma}$ is a perturbation of $\gamma$, it follows that $\gamma \in V_{M}$.

\medskip

{\bf Case~3.2.} Suppose the two loops correspond to elements of $[9]$ that lie on a common line in $M$. In this case, by Lemma 5.5 (ii) of \cite{liwski2025minimal}, the circuit variety of this matroid is contained within that of $V_{M}$.

\medskip 

\case{\bf There are three loops.}

\medskip

{\bf Case~4.1.} Suppose that two of the three loops correspond to elements of $[9]$ that lie on a common line in $M$. In this case, by Lemma 5.5 (ii) of \cite{liwski2025minimal}, the circuit variety of $M$ is a subset of $V_{M}$.

\medskip

{\bf Case~4.2.} Suppose that no two of the three loops correspond to elements of $[9]$ that lie on a common line in $M$. In this case, we may assume, without loss of generality, that the loops correspond to the elements $1$, $4$, and $9$. Since $\gamma \in V(G_M)$, we can redefine $\gamma_4$ as a nonzero vector while keeping the collection of vectors within $V_{\CCC(M)}$. This reduces us to Case~3, where the argument relies only on the concurrency of the lines $\gamma_{\{1,2,3\}}$, $\gamma_{\{1,6,8\}}$, and $\gamma_{\{1,5,7\}}$, a property that holds after the perturbation.

\medskip

\case{\bf There are at least four loops.}

\medskip

In this case, at least two of these four loops correspond to elements in $[9]$ that belong to a common line in $M$. It then follows from Lemma~5.5(ii) of \cite{liwski2025minimal} that the circuit variety of $M$ is a subset of $V_{M}$.
\end{mycases}

\medskip

We have established that $\gamma\in V_{M}$ in each of the possible cases, thereby completing the proof.
\end{proof}

Although Theorem~\ref{thm:Pappus} establishes that the ideals $I_{\CCC(M)}$, $G_M$, and $I_M^{\textup{lift}}$ generate $I_{M}$ up to radical, it does not provide an explicit generating set. The following remark addresses this point.

\begin{remark} \label{remark pappus}
By Theorem~\ref{generators pascal}, to find explicit generators for $I_M$, up to radical, it suffices to find explicit generators for $I_{\CCC(M)}, G_M$ and $I_M^{\textup{lift}}$.
\begin{itemize}[noitemsep]
 \item The circuit polynomials are: 
     $\bigl\{[123],[168],[157],[247],[269],[789],[456],[348],[359]\bigr\}.$
\item Concerning the Grassmann–Cayley polynomials, by the proof of Theorem~\ref{thm:Pappus}, it suffices to consider the nine polynomials arising from the nine triples of concurrent lines in $M$ given by:
\begin{equation*}
\begin{aligned}\bigl\{&[235][768]-[237][568], [134][769]-[137][469], [124][859]-[128][459],\\
&[241][589]-[245][189], 
     [791][634]-[796][134],[263][578]-[265][378],\\
&[273][856]-[278][356],[461][739]-[467][139],[291][845]-[298][145]\bigr\}.
\end{aligned}
\end{equation*}
\item From the proof of Theorem~\ref{thm:Pappus}, we observe that the only instances where the assumption $\gamma \in V(I_M^{\textup{lift}})$ is used are: (1) to guarantee the existence of a non-degenerate lifting of the vectors $\{\gamma_{1}, \ldots, \gamma_{9}\}$ to a collection in $V_{\CCC(M)}$, and (2) to ensure that for each $i \in [9]$, the eight vectors $\{\gamma_1, \ldots, \gamma_{i-1}, \hat{\gamma}_i, \gamma_{i+1}, \ldots, \gamma_9\}$ have a non-degenerate lifting to a collection in $V_{\CCC(M \backslash \{i\})}$. Thus, by \textup{\cite[Lemma~3.6]{liwski2024pavingmatroidsdefiningequations}}, it suffices to consider the $7 \times 7$ minors of the liftability matrices $\mathcal{M}_q(M)$, together with the $6 \times 6$ minors of the liftability matrices $\mathcal{M}_q(M \backslash \{i\})$ for $i \in [9]$.

Although the set of all such minors is not finite, since $q$ ranges over $\CC^{3}$, we may adopt the same strategy used in Remark~\ref{remark pascal} to obtain a finite generating set. Namely, we select vectors $q_{1},\ldots,q_{9}\in \{e_{1},e_{2},e_{3}\}$ and replace the vector $q$ appearing in the bracket of column $i$ with $q_i$. The resulting polynomials are the $7\times 7$ minors of the matrices

$$\begin{pmatrix}
       [23q_1] & [-13q_2] & [12q_3] & 0 & 0 & 0 & 0& 0 & 0 \\
       [68q_1] & 0 & 0 & 0 & 0 & -[18q_6] & 0 & [16q_8] & 0 \\
       [57q_1] & 0 & 0 & 0 & -[17q_5] & 0 & [15q_7] & 0 & 0 \\
       0 & [47q_2] & 0 & -[27q_4] & 0 & 0 & [24q_7] & 0 & 0 \\
       0 & [69q_2] & 0 & 0 & 0 & -[29q_6] & 0 & 0 & [26q_9] \\
       0 & 0 & [48q_3] & -[38q_4] & 0 & 0 & 0 & [34q_8] & 0 \\
       0 & 0 & [59q_3] & 0 & -[39q_5] & 0 & 0 & 0& [35q_9] \\
       0 & 0 & 0 & 0 & 0 & 0 & [89q_7] & -[79q_8] & [78q_9] \\
       0 & 0 & 0 & [56q_4] & -[46q_5] & [45q_6] & 0 & 0 & 0
   \end{pmatrix},$$
where $q_{1},\ldots,q_{9}\in \{e_{1},e_{2},e_{3}\}$, and the $6 \times 6$ minors of the matrices
$$\begin{pmatrix}
       [23q_1] & [-13q_2] & [12q_3] & 0 & 0 & 0 & 0& 0 \\
       [68q_1] & 0 & 0 & 0 & 0 & -[18q_6] & 0 & [16q_8]  \\
       [57q_1] & 0 & 0 & 0 & -[17q_5] & 0 & [15q_7] & 0 \\
       0 & [47q_2] & 0 & -[27q_4] & 0 & 0 & [24q_7] & 0  \\
       0 & 0 & [48q_3] & -[38q_4] & 0 & 0 & 0 & [34q_8]  \\
       0 & 0 & 0 & [56q_4] & -[46q_5] & [45q_6] & 0 & 0 
   \end{pmatrix},$$
   where $q_1, \ldots, q_8 \in \{e_1,e_2,e_3\}$, 
   along with the $6\times 6$ minors of the matrices analogous to this, corresponding to omitting each index $i=1,\ldots,8$. The total number of such polynomials is:
\[\textstyle \binom{9}{7}3^{9}+\binom{8}{6}3^{10}=2{,}361{,}960 .\]
\end{itemize}
\end{remark}

\smallskip

\noindent{\bf Acknowledgement.} 
This work is based on the Master's thesis of the third author. F.M and E.L were partially supported by the FWO grants G0F5921N (Odysseus) and G023721N, and the grant iBOF/23/064 from KU Leuven. E.L. was supported by PhD fellowship 1126125N.


\bibliographystyle{abbrv}
\bibliography{references}

\medskip
{\footnotesize\noindent {\bf Authors' addresses}
\medskip

\noindent{Emiliano Liwski, 
KU Leuven}\hfill {\tt  emiliano.liwski@kuleuven.be} 
\\ 
\noindent{Fatemeh Mohammadi, 
KU Leuven} \hfill {\tt fatemeh.mohammadi@kuleuven.be}
\\ 
\noindent{Lisa Vandebrouck, 
KU Leuven} \hfill {\tt lisa.vandebrouck@student.kuleuven.be}
}\\
\end{document}